\documentclass[11pt]{amsart}

\usepackage{a4wide, amsmath, amsfonts, amssymb, mathrsfs, amsthm, bbm, appendix}
\usepackage[hyperfootnotes=false]{hyperref}

\usepackage{xargs}
\usepackage[pdftex,dvipsnames]{xcolor}  

\usepackage[shortlabels]{enumitem}

\usepackage{xcolor}
\numberwithin{equation}{section}
\usepackage{enumitem}

\newtheorem{theorem}{Theorem}[section]
\newtheorem{lemma}[theorem]{Lemma}
\newtheorem{corollary}[theorem]{Corollary}
\newtheorem{remark}[theorem]{Remark}
\newtheorem{proposition}[theorem]{Proposition}
\newtheorem{definition}[theorem]{Definition}

\newtheorem{assumption}[theorem]{Assumption}

\allowdisplaybreaks[2]


\renewcommand{\epsilon}{\varepsilon}
\newcommand{\R}{\mathbb{R}}
\newcommand{\N}{\mathbb{N}}
\renewcommand{\P}{\mathbb{P}}
\newcommand{\E}{\mathbb{E}}
\newcommand{\indicator}[1]{\mathbbm{1}_{#1}}
\newcommand{\Id}{\,\mathrm{d}}
\newcommand{\norm}[1]{\left\lVert#1\right\rVert}
\newcommand{\qv}[1]{\langle #1 \rangle}
\newcommand{\testfunctions}[1]{C_c^{\infty}(#1)}

\newcommand{\cF}{\mathcal{F}}
\newcommand{\cG}{\mathcal{G}}
\newcommand{\cP}{\mathcal{P}}
\def\L{\mathscr L}

\title[Relative entropy in the common noise setting]{Quantitative relative entropy estimates for interacting particle systems with common noise}

\author[Nikolaev]{Paul Nikolaev}
\address{Paul Nikolaev, University of Mannheim, Germany}
\email{paul.nikolae@uni-mannheim.de}

\date{\today}

\begin{document}

\begin{abstract}
We derive quantitative estimates proving the conditional propagation of chaos for large stochastic systems of interacting particles subject to both idiosyncratic and common noise. We obtain explicit bounds on the relative entropy between the conditional Liouville equation and the stochastic Fokker--Planck equation with an interaction kernel \(k\in L^2(\R^d) \cap L^\infty(\R^d)\), extending far beyond the Lipschitz case. Our method relies on reducing the problem to the idiosyncratic setting, which allows us to utilize the exponential law of large numbers by Jabin and Wang~\cite{JabinWang2018} in a pathwise manner.  
\end{abstract}

\maketitle

\noindent \textbf{Key words:} common noise, interacting particle systems, conditional McKean--Vlasov equations, conditional Liouville equation, non-linear non-local SPDE, propagation of chaos with common noise, relative entropy.

\noindent \textbf{MSC 2010 Classification:} 60H15, 60K35.

\tableofcontents
\section{Introduction}
We prove a conditional propagation of chaos result for the interacting particle system      
\begin{equation} \label{eq: intro_interacting_envir}
\Id X_t^{i} = - \frac{1}{N}\sum\limits_{j=1}^N k(X_t^{i}-X_t^{j}) \Id t + \sigma(t,X_t^{i}) \Id B_t^{i} + \nu(t,X_t^{i}) \Id W_t 
\end{equation}
driven by idiosyncratic noise \((B_t^{i}, t \ge 0)\), \(i \in \N\) and common noise \( (W_t, t \ge 0)\), both represented by multi-dimensional Brownian motions. The Brownian motions \((B_t^{i}, t \ge 0)\) are independent of each other, and \( (W_t, t\ge 0)\) is independent of \((B_t^{i}, t \ge 0)\) for all \(i\). Details of the probabilistic setting are provided in Section~\ref{sec: setting}. For the interaction kernel, we require \(k\in L^2(\R^d) \cap L^\infty(\R^d)\). 
Systems of the form~\eqref{eq: intro_interacting_envir} are commonly utilized in the field of mean-field games~\cite[Section 2.1]{CarmonaReneDElarue2018} as well as mathematical finance~\cite{Ahuja2016,Lacker2020,Hammersley2020,Shkolnikov2023}.

Our aim is to investigate the asymptotic behavior of the system~\eqref{eq: intro_interacting_envir} and to derive macroscopic models, a process known as propagation of chaos~\cite{snitzman_propagation_of_chaos,JabinWang2017}. In this article, we develop the relative entropy method in the common noise setting~\eqref{eq: intro_interacting_envir}. While relative entropy theory is well-established in the absence of common noise~\cite{JabinWang2018,BreschDidierJabinWangZhenfu2019}, there is relatively little literature addressing relative entropy in the case with common noise. The techniques that relative entropy theory relies on—namely, the Liouville equation, the regularity of the mean-field partial differential equation (PDE), and the exponential law of large numbers—need to be adapted for the common noise setting.
Recently, Lacker~\cite{Lacker2023} manage to improve the convergence rate of the relative entropy under stricter assumptions by utilizing the BBGKY hierarchy. 
Previously, Jabin, Bresch, and Soler~\cite{bresch2022} used the BBGKY for VlasovPoisson-Fokker-Planck system for plasmas in dimension two. 

In our setting, the presence of common noise adds an extra layer of complexity to the problem. The empirical distribution of players at the limit now evolves stochastically according to a nonlinear stochastic partial differential equation (SPDE). Moreover, the associated Liouville equation must be modified to a conditional Liouville equation, which also solves an SPDE.

\subsection{Our contribution}
Our first contribution lies in proving the global well-posedness of the stochastic partial differential equations associated with the particle systems. Since our kernel \(k\) lies in \(L^2(\R^d) \cap L^\infty(\R^d)\) it prevents us from directly applying established results in the literature on SPDEs, such as those found in textbooks by Krylov~\cite{Krylov1999AnAA}, Rozovsky~\cite{RozovskyBorisL2018SES} to our nonlinear stochastic Fokker--Planck equation~\eqref{eq: chaotic_spde}. Consequently, for our nonlinear SPDE~\eqref{eq: chaotic_spde}, we resort to employing a Picard iteration method.

Our second contribution focuses on demonstrating the boundedness of the relative entropy in \(\R^{dN}\). To achieve this, we compute the evolution of the relative entropy using  It{\^o}'s formula. A key insight lies in recognizing that, unlike in the classical setting where distributions are directly compared in the relative entropy, we should instead compare the conditional distribution of the interacting particle system~\eqref{eq: intro_interacting_envir} with the solution of the stochastic Fokker--Planck equation~\eqref{eq: chaotic_spde}.  

However, defining the relative entropy for general random measures poses a challenge. To the best of our knowledge, we could not find a reliable definition.  
But if we consider solutions of the conditional Liouville
equation~\eqref{eq: liouville_SPDE} and the stochastic Fokker--Planck equation~\eqref{eq: chaotic_spde} with sufficient regularity, we can provide a pointwise interpretation of the SDPE's~\eqref{eq: liouville_SPDE},~\eqref{eq: chaotic_spde}. This enables us to apply It{\^o}'s formula to the SPDE's, thereby deriving an expression for the dynamics of the relative entropy. Subsequently, we can further analyze this expression by employing methods known in the non-common case, such as the exponential law of large numbers~\cite{JabinWang2018}. Notice, that in the absence of transport-type noise, we must address the quadratic variations arising in the calculations. 
Consequently, we establish the boundedness of the relative entropy for systems with smooth coefficients. 
By applying the subadditivity and the Csiszar--Kullback--Pinsker inequality, we obtain an estimate in the \(L^1\)-norm between the marginals of the Liouville equation and the stochastic Fokker--Planck equation, showing a decay rate of \(N^{-1/2}\), indicating that results from Jabin and Wang~\cite{JabinWang2018} are applicable even in the presence of common noise.  Additionally, we recover the result from Jabin and Wang~\cite{JabinWang2018} on the whole space in the case of vanishing common noise \(\nu=0\). 

Finally, we demonstrate that in the case \(k \in L^2(\R^d) \cap L^\infty(\R^d)\) the conditional Liouville equation~\eqref{eq: liouville_SPDE} and the stochastic Fokker--Planck equation~\eqref{eq: chaotic_spde} can be approximated by the associated SPDE's with smooth coefficients. This type of stability result leads to the relative entropy estimate, which seems to be novel. 
Additionally, we demonstrate conditional propagation of chaos. To our best knowledge, this is the first result on conditional propagation of chaos for general bounded interaction kernels in the \(L^1\)-norm. However, it's crucial to exercise caution at this stage, as the standard equivalent characterization provided by Sznitman~\cite[Proposition 2.2]{snitzman_propagation_of_chaos} cannot be directly applied due to the non-deterministic nature of our limiting measure. Therefore, we need to replicate the proof in the setting of random measures. 
Consequently, we extend the results for conditional propagation of chaos by Carmona and Delarue~\cite{CarmonaReneDElarue2018} to the kernel \(k \in L^2(\R^d) \cap L^\infty(\R^d)\).

\subsection{Related literatures}
In contrast to interacting particle system, which are driven by idiosyncratic noise~\cite{snitzman_propagation_of_chaos,JabinWang2018,RosenzweigSerfaty2023,galeati2024}, literature on common noise remains limited.
For systems with uniformly Lipschitz interaction forces, Coghi and Flandoli~\cite{CoghiFlandoli2016} established conditional propagation of chaos in the presence of common noise. Their approach relied on sharp estimates in Kolmogorov’s continuity theorem and properties of the measure-valued solutions of the associated stochastic Fokker–Planck equation.
Dawson and Vaillancourt~\cite{Dawson1995} also formulated a martingale problem and demonstrated tightness of the empirical measure obtaining a qualitative result with no convergence rates. 

In a parallel effort to ours, Shao and Zao~\cite{Shao2024} recently presented a similar relative entropy estimate for the stochastic two-dimensional Navier--Stokes equation driven by transport noise, utilizing methods from~\cite{JabinWang2018}. Their approach necessitates the initial independence of intensity and position, which introduces additional assumptions on the physical model. While their work shares similarities with ours, such as the relative entropy estimate and utilization of common noise, distinctions arise in the domain (torus vs \(\R^d\)) and the nature of noise (transport noise vs. It{\^o} noise). Moreover, the authors~~\cite{Shao2024} capitalized on the specific structure of the Biot--Savart kernel, particularly its divergence-free property. 

Rosenzweig~\cite{Rosenzweig2020} addressed a related problem by establishing quantitative estimates for the Biot--Savart kernel using the modulated energy method in a pathwise setting, a method previously introduced by Serfaty~\cite{Serfaty_2020} in the deterministic setting for Coulomb kernels. Challenges in this context stemmed from commutator estimate for the second-order correction term. Although the modulated energy method naturally extends to systems with transport noise, the pathwise extension in the case of relative entropy requires a pathwise Radon--Nikodym derivative, posing challenges for random measures.
Furthermore, Rosenzweig, Nguyen, and Serfaty~\cite{Nguyen2022} obtained similar results for the repulsive Coulomb case under transport noise.

In our work, the presence of idiosyncratic noise, dictated by the ellipticity of the diffusion coefficient \(\sigma\), plays a significant role. 
This aspect differentiates our approach from the previous literature discussed. 
In out setting, the work by Huang, Qiu~\cite{HuangQui2021} investigates the Keller--Segel model with Bessel potential and the work of Chen, Prömel and the author~\cite{nikolaev2023hk} demonstrates the existence and uniqueness of nonlinear SPDEs and conditional McKean--Vlasov equations.
Other relevant literature includes the work of Kurtz, Xiong~\cite{KURTZ1999}, Coghi and Gess~\cite{Coghi2019} for the existence of stochastic non-linear Fokker--Planck equations under non-linaer Lipschitz coefficients. 

A broad area where interacting particle systems of the form~\eqref{eq: intro_interacting_envir} are studied is in mathematical finance and mean-field games. For instance, Carmona, Delarue, and Lacker~\cite{CarmonaReneDelarueLacker2016} investigate mean-field games for Lipschitz drifts with common noise of the form~\eqref{eq: intro_interacting_envir}, as do Delarue, Lacker, and Ramanan~\cite{Lacker2020}. We refer to the references therein for more details.

Finally, let us mention the results of Jabin and Wang~\cite{JabinWang2018}, which serve as a foundational reference in our work. We extend their results in the bounded kernel setting to include common noise. Additionally, we require fewer assumptions on the regularity of the mean-field equation, which is a consequence of the stronger assumption on the interaction kernel \(k\) and the ellipticity of the diffusion coefficient \(\sigma\). 
However, the focus on the entire space \(\R^d\) prevents the inclusion of kernels in \(W^{-1,\infty}(\R^d)\), as stability of stochastic partial differential equations cannot be guaranteed using our methods.

\smallskip
\noindent\textbf{Organization of the paper:}
In Section~\ref{sec: setting} we provide the definitions of the particle systems and the associated SPDE's along with the introduction of the relative entropy. In
Section~\ref{sec: existence_ass_spde} the well-posedness of the Liouville equation~\eqref{eq: liouville_SPDE} and the stochastic Fokker--Planck equation~\eqref{eq: chaotic_spde} is established. 
In Section~\ref{sec: relative_entropy_smooth_setting} we compute the evolution of the relative entropy in the setting of smooth coefficients. Finally, 
conditional propagation of chaos with common noise for interaction kernel \(k \in L^2(\R^d) \cap L^\infty(\R^d)\) is investigated in Section~\ref{sec: stability_spde} as well as a stability estimate for the conditional Liouville equation and stochastic Fokker--Planck equation.

\section*{Acknowledgment}
The majority of the work was completed during Paul Nikolaev's research visit to Beijing under the supervision of Prof. Zhenfu Wang. We would like to thank Peking University or their hospitality and Zhenfu Wang for the fruitful discussions regarding relative entropy and the propagation of chaos.
Additionally, we are grateful to Xing Huang for providing useful insights about relative entropy and conditional laws, as well as for contributing to the result in the appendix, which emerged as a byproduct of discussions at a Peking University seminar. Finally, we extend our thanks to Yun Gong for the extensive discussions on modulated energy, relative entropy, and the topic of mean-field limits. His explanations of complex mathematical concepts and his willingness to assist with matters outside the campus greatly contributed to the productivity of our research.

\section{Setting} \label{sec: setting}
We write a vector in \(\R^{dN}\) as \(x = (x_1, \ldots, x_N) \in \R^{dN} \), where \(x_i=(x_{i,1},\ldots,x_{i,d}) \in \R^d\). For a normal vector in \(\R^d\) we will use the variable \(z \in \R^d\). 
For a matrix \(A \in \R^{d \times d'}\) we denote the \((\alpha,\beta)\) entry as \([A]_{(\alpha,\beta)}\)
Throughout the entire paper, we use the generic constant \( C \) for inequalities, which may change from line to line.

\subsection{Probability setting}
In this subsection we introduce the probabilistic setting, in particular, the \(N\)-particle system and the associated McKean--Vlasov equation. To that end, let \((\Omega, \mathcal{F}, ( \mathcal{F}_t)_{t \ge 0 } , \P)\) be a complete probability space with right-continuous filtration \((\mathcal{F}_t)_{t \ge 0 } \). Furthermore, let \((B_t^{i}=(B_t^{i,1},\ldots,B_t^{i,m}), t\ge 0)\), \( i=1, \ldots, N\) be independent \(m\)-dimensional Brownian motions with respect to \((\cF_t, t \ge 0)\) and \((W_t=(W_t^{1},\ldots,W_t^{\tilde{m}}), t\ge 0)\) be another \(\tilde{m}\)-dimensional Brownian motions with respect to \((\cF_t, t \ge 0)\), which is independent from \((B_t^{i}, t\ge 0)\), \( i=1, \ldots, N\). Moreover, we denote by \(\cF^W = (\cF^W_t ,t \ge 0)\) 
the augmented filtration generated by \(W\) and by
\(\cP^W\) the predictable \(\sigma\)-algebra with respect to \(\cF^W\). For the initial data we consider a sequence \((\zeta^{i}, i \in \N)\) of independent \(d\)-dimensional \(\cF_0\)-measurable random variables with density \(\rho_0\), which are independent of the Brownian motions \((B_t^{i}, t\ge 0)\), \( i=1, \ldots, N\) and the filtration \(\cF^W = (\cF^W_t ,t \ge 0)\). 
We require also the following coefficients 
\begin{equation*}
    k \colon \R^d \mapsto \R^d, \quad 
    \sigma \colon [0,T] \times \R^d \mapsto \R^{d \times m}, \quad 
    \nu \colon [0,T] \times \R^d \mapsto \R^{d \times \Tilde{m}}. 
\end{equation*}
We define the interacting particle system by  
\begin{equation} \label{eq: particle_system}
\Id X_t^{i} = - \frac{1}{N}\sum\limits_{j=1}^N k(X_t^{i}-X_t^{j}) \Id t + \sigma(t,X_t^{i}) \Id B_t^{i} + \nu(t,X_t^{i}) \Id W_t 
\end{equation}
and the conditional McKean--Vlasov system by
\begin{align} \label{eq: conditional_mckean_vlasov}
\begin{cases}
\Id Y_t^{i} = - (k*\rho_t(Y_t^{i})) \Id t + \sigma(t,Y_t^{i}) \Id B_t^{i} + \nu(t,Y_t^{i}) \Id W_t  \\
\rho_t = \L_{Y_t^{i} \vert \cF^W_t}, 
\end{cases}
\end{align}
where \(\L_{Y_t^{i} \vert \cF^W_t}\) is the conditional density of \(Y^{i}_t\) given \(\cF^W_t\).
It is well-known that pathwise-uniqueness implies that \( \L_{Y_t^{i} \vert \cF^W_t} \) is independent of \(i \in \N\).

\subsection{Function spaces and basic definitions}

In this subsection we collect some basic definitions and introduce the required function spaces for our SPDE's. For \( 1 \le p \le \infty\) we denote by \(L^p(\R^d)\) with norm \(\norm{\cdot}_{L^p(\R^d)}\) the vector space of measurable functions whose \(p\)-th power is Lebesgue integrable (with the standard modification for \(p = \infty\)), by \(\testfunctions{\R^d}\) the set of all infinitely differentiable functions with compact support on \(\R^d\) and by \(\mathcal{S}(\R^d)\) the set of all Schwartz functions, see \cite[Chapter~6]{YoshidaKosaku1995FA} for more details. We endow \(\testfunctions{\R^d}\) and \(\mathcal{S}(\R^d)\) with their standard topologies. Let
\begin{equation*}
  \mathcal{A}:= \{ \alpha = ( \alpha_1,\ldots, \alpha_d) \; : \; \alpha_1, \ldots, \alpha_d \in \N_{0} \}
\end{equation*}
be the set of all multi-indices and \(|\alpha| := \alpha_1 +  \ldots+ \alpha_d\). The derivative will be denoted by
\begin{equation*}
 \frac{\partial^{|\alpha|}}{\partial x_1^{\alpha_1} \partial x_2^{\alpha_2} \cdots \partial x_d^{\alpha_d} } \quad \mathrm{or} \quad \partial x_1^{\alpha_1} \partial x_2^{\alpha_2} \cdots \partial x_d^{\alpha_d}.
\end{equation*}
 We drop the superscripts \(\alpha_i\) in the case \(|\alpha_i| = 1 \). For each \(\alpha, \beta \in \mathcal{A} \) we define the semi-norms
\begin{align*}
  p_{\alpha, \beta} (f):= \sup\limits_{z \in \R^d} | z^\alpha( \partial^\beta f)(z)|.
\end{align*}
Equipped with these seminorms, \(\mathcal{S}(\R^d)\) is  a Fr{\'e}chet space \cite[Appendix~A.5]{AbelsHelmut2012}. Furthermore, we introduce the space of Schwartz distributions \(\mathcal{S}'(\R^d)\). We denote dual parings by \(\qv{\cdot, \cdot}\). For instance, for \(f \in \mathcal{S}', \; u \in \mathcal{S}\) we have \(\qv{u,f}  = u[f]\) and for a probability measure~\(\mu\) we have \(\qv{f,\mu}  = \int f \Id \mu\). The correct interpretation will be clear from the context but should not be confused with the scalar product \(\qv{\cdot, \cdot}_{L^2(\R^d)} \) in \(L^2(\R^d)\).

The Fourier transform \(\mathcal{F}[u]\) and the inverse Fourier transform \(\mathcal{F}^{-1}[u]\) for $u \in \mathcal{S}'(\R^d)$ and $ f \in \mathcal{S}(\R^d)$ are defined by
\begin{equation*}
  \qv{\mathcal{F}[u],f} := \qv{u, \mathcal{F}[f]},
\end{equation*}
where \(\mathcal{F}[f]\) and \(\mathcal{F}^{-1}[f]\) is given by
\begin{equation*}
  \mathcal{F}[f](\xi) := \frac{1}{(2\pi)^{d/2}} \int e^{-i \xi \cdot z } f(z) \Id z
  \quad \text{and}\quad
  \mathcal{F}^{-1}[f](\xi):= \frac{1}{(2\pi)^{d/2}} \int e^{i \xi \cdot z } f(z) \Id z .
\end{equation*}

For each \(s \in \R\) we denoted the Bessel potential by \(J^s := (1-\Delta)^{s/2}u := \mathcal{F}^{-1}[(1+|\xi|^2)^{s/2} \mathcal{F}[u]] \) for \(u \in \mathcal{S}'(\R^d)\). We define the Bessel potential space \(\mathnormal{H}_p^s \) for \(p \in [1,\infty)\) and \(s \in \R\) by
\begin{equation*}
  \mathnormal{H}_p^s:= \{ u \in \mathcal{S}'(\R^d) \; : \;  (1-\Delta)^{s/2} u \in L^p(\R^d) \}
\end{equation*}
with the norm
\begin{equation*}
  \norm{u}_{\mathnormal{H}^{s}_p}:= \norm{(1-\Delta)^{s/2}u}_{L^p(\R^d)}, \quad u \in  \mathnormal{H}_p^s.
\end{equation*}
For \(1 <p < \infty , \; m \in \N\) we can characterize the above Bessel potential spaces \(\mathnormal{H}_p^m\) as Sobolev spaces
\begin{align*}
  W^{m,p} (\R^d) : = \bigg \{ f \in L^p(\R^d) \; : \; \norm{f}_{W^{m,p}(\R^d)}:= \sum\limits_{ \alpha \in \mathcal{A} ,\  |\alpha|\le m } \norm{\partial^\alpha f }_{L^p(\R^d)}  < \infty \bigg \},
\end{align*}
where \(\partial^\alpha f \) is to be understood as weak derivatives \cite{AdamsRobertA2003Ss}. We refer to \cite[Theorem~2.5.6]{TriebelHans1983Tofs} for the proof of the above characterization. As a result, we use Sobolev spaces, which in our context are easier to handle, instead of Bessel potential spaces, whenever possible.

Finally, we introduce general \(L^p\)-spaces. For a Banach space \((E, \norm{\cdot}_{E})\), some filtration \((\cF_t)_{t\geq0}\), \(1 \le p \le \infty\) and \(0 \le s <t \le T\) we denote by \(S^p_{\cF}([s,t];E )\) the set of \(E\)-valued \((\mathcal{F}_t)\)-adapted continuous processes \((X_u, u \in [s,t])\) such that
\begin{align*}
  \norm{X}_{S^p_{\cF}([s,t];E )}:=
  \begin{cases}
  \bigg( \E \bigg( \sup\limits_{u \in [s,t]} \norm{X_u}_E^p\bigg) \bigg)^{\frac{1}{p}}, \quad & p \in [1, \infty)  \\
  \sup\limits_{ \omega \in \Omega } \sup\limits_{u \in [s,t]} \norm{X_u}_E , \quad & p = \infty
  \end{cases}
\end{align*}
is finite. Similar, \(L^p_{\cF}([s,t];E)\) denotes the set of \(E\)-valued predictable processes \((X_u, u \in [s,t])\) such that
\begin{align*}
  \norm{X}_{L^p_{\cF}([s,t];E )}:=
  \begin{cases}
  \bigg( \E \bigg( \int\limits_{s}^t  \norm{X_u}_E^p \Id u \bigg) \bigg)^{\frac{1}{p}}, \quad & p \in [1, \infty)  \\
  \sup\limits_{ (\omega,u)  \in \Omega \times [s,t]} \norm{X_u}_E , \quad & p = \infty
  \end{cases}
\end{align*}
is finite. In most case \(Z\) will be the Bessel potential space \(\mathnormal{H}_p^n\), as it is mainly used by Krylov~\cite{krylov2010ito} in treating SPDEs. For a more detail introduction to the above function spaces we refer to \cite[Section~3]{Krylov1999AnAA}.

At the end, let us introduce the concept of relative entropy for probability measures.   
For two probability measures \(\mu_1, \mu_2\) over \(E\), we define the relative entropy \(\mathcal{H}(\mu_1 \vert \mu_2)\) as 
\begin{equation*}
\mathcal{H}(\mu_1 \vert \mu_2)
= 
\begin{cases}
\int_E \frac{\mathrm{d} \mu_1 }{\mathrm{d} \mu_2 } \log\bigg( \frac{ \mathrm{d}\mu_1}{\mathrm{d} \mu_2 }\bigg) \Id \mu_2 ,  \quad \quad &\mathrm{if} \; \mu_1 \ll \mu_2, \\
\infty, &\mathrm{otheriwse}. 
\end{cases}
\end{equation*}
Let us recall two crucial properties of the relative entropy. 
For simplicity, let us assume that for \(r\in \N\) we have two probability densities \(f^r\) and \(g^r\) on \(E^r\). Then, the Csiszar--Kullback--Pinsker inequality~\cite[Chapter~22]{Villani2009}
\begin{equation} \label{eq: CKP_inequality}
    \norm{f^m-g^m}_{L^1(E^r)}^2 \le 2 \mathcal{H}(f^r \vert g^r) 
\end{equation}
holds. Additionally, if \(f^r\) is a symmetric probability measure for all \(r \le N\) and \(g^r\) is a tensor product \(g^{\otimes r}\) for some probability density \(g\), then the subadditivy inequality~\cite[Proposition 19]{Wang2017} 
\begin{equation}
    \mathcal{H}(f^r \vert g^r)  \le \frac{r}{N} \mathcal{H}(f^N \vert g^N) 
\end{equation}
holds for all \(r \le N \).

\subsection{Stochastic partial differential equations}

Let us introduce the SPDE's associated to the SDE's defined in~\eqref{eq: particle_system},~\eqref{eq: conditional_mckean_vlasov}. 
From the interacting particle system~\eqref{eq: particle_system} we can derive the stochastic Fokker--Planck equation in \(\R^{dN}\) solved by the conditional density \(\rho_t^N\) given the filtration \(\cF_t^W\). We call this SPDE the conditional Liouville equation, which is given by
\begin{align} \label{eq: liouville_SPDE}
\Id \rho_t^N
=& \;  \sum\limits_{i=1}^N \nabla_{x_i} \cdot \bigg( \frac{1}{N} \sum\limits_{j=1}^N k(x_i-x_j) \rho_t^N \bigg) \Id t 
- \sum\limits_{i=1}^N \nabla_{x_i} \cdot (\nu(t,x_{i})  \rho_t^N \Id W_t)  \nonumber \\
&\; + \frac{1}{2} \sum\limits_{i,j=1}^N \sum\limits_{\alpha,\beta=1}^d \partial_{x_{i,\alpha}}\partial_{x_{j,\beta}} \bigg( ( [\sigma(t,x_i)\sigma(t,x_j)^{\mathrm{T}}]_{(\alpha,\beta)} \delta_{i,j} + [\nu(t,x_i)\nu(t,x_j)^{\mathrm{T}}]_{(\alpha,\beta)} ) \rho_t^N  \bigg)  \Id t .
\end{align}

Analogously, we define the
\(d\)-dimensional non-linaer SPDE 
\begin{align} \label{eq: d_dim_spde}
\begin{split}
\Id \rho_t
=& \;  \nabla \cdot (( k*\rho_t) \rho_t ))  \Id t 
- \nabla \cdot (\nu_t  \rho_t \Id W_t)  \\
&\; + \frac{1}{2}  \sum\limits_{\alpha,\beta=1}^d \partial_{z_{\alpha}}\partial_{z_{\beta}} \bigg( ( [\sigma_t\sigma_t^{\mathrm{T}}]_{(\alpha,\beta)} + [\nu_t \nu_t^{\mathrm{T}}]_{(\alpha,\beta)} ) \rho_t \bigg)  \Id t
\end{split}
\end{align}
associated to~\eqref{eq: conditional_mckean_vlasov}. 
The provided equation represents the conditional density of a single particle in the McKean--Vlasov SDE~\eqref{eq: conditional_mckean_vlasov}. Extending this concept to cover \(N\) conditionally independent particles, we introduce the SPDE

\begin{align} \label{eq: chaotic_spde}
\Id \rho_t^{\otimes N }
=& \;  \sum\limits_{i=1}^N \nabla_{x_i} \cdot (( k*\rho_t)(x_i) \rho_t^{\otimes N } ))  \Id t 
- \sum\limits_{i=1}^N \nabla_{x_i} \cdot (\nu(t,x_{i})  \rho_t^{\otimes N }  \Id W_t) \nonumber  \\
& + \frac{1}{2} \sum\limits_{i,j=1}^N \sum\limits_{\alpha,\beta=1}^d \partial_{x_{i,\alpha}}\partial_{x_{j,\beta}} \bigg( ( [\sigma(t,x_i)\sigma(t,x_j)^{\mathrm{T}}]_{(\alpha,\beta)} \delta_{i,j} + [\nu(t,x_i)\nu(t,x_j)^{\mathrm{T}}]_{(\alpha,\beta)} ) \rho_t^{\otimes N }  \bigg)  \Id t. 
\end{align}
The notation \(\otimes N\) is intentionally used, as it will become evident that if the \(d\)-dimensional SPDE~\eqref{eq: d_dim_spde} has a solution, then equation~\eqref{eq: chaotic_spde} also possesses a solution in the form of a tensor product. 

From the above equation we can deduce the following \(r\)-marginal SPDE of the interacting particle system 
\begin{align*}
\Id \rho_t^{r,N} 
=& \;  \sum\limits_{i=1}^r \int_{\R^{(N-r)d}} \nabla_{x_i} \cdot \bigg( \frac{1}{N} \sum\limits_{j=1}^N  k(x_i-x_j) \rho_t^N \bigg) \Id x_{r+1} \cdot \Id x_N  \Id t \\
&- \sum\limits_{i=1}^N \nabla_{x_i} \cdot ( \int_{\R^{(N-r)d}} \nu(t,x_{i}) \rho_t^N \Id x_{r+1} \cdots \Id x_N \Id W_t) \\
&\; + \frac{1}{2} \sum\limits_{i,j=1}^N \sum\limits_{\alpha,\beta=1}^d \partial_{x_{i,\alpha}}\partial_{x_{j,\beta}} \bigg( \int_{\R^{(N-r)d}} ( [\sigma(t,x_i)\sigma(t,x_j)^{\mathrm{T}}]_{(\alpha,\beta)} \delta_{i,j} \\
&+ [\nu(t,x_i)\nu(t,x_j)^{\mathrm{T}}]_{(\alpha,\beta)} ) \rho_t^N  \bigg) \Id x_{r+1} \cdots x_N \Id t 
\end{align*}

We provide now definitions to the SPDE's~\eqref{eq: liouville_SPDE},~\eqref{eq: d_dim_spde} and~\eqref{eq: chaotic_spde}. 
\begin{definition}\label{def: solution_of_liouville_equation}
For a fix \(N\in \N\) a non-negative stochastic process \((\rho_t^N , t \ge 0)\) is called a (weak) solution of the SPDE~\eqref{eq: chaotic_spde} with initial data \(\rho^{\otimes N}_0\) if
  \begin{equation*}
    \rho^N  \in  L^2_{\cF^W}([0,T];H^1(\R^{dN}) ) \cap S^\infty_{\cF^W}([0,T];L^1(\R^{dN}) )
  \end{equation*}
  and, for any \(\varphi \in \testfunctions{\R^{dN}}\), \(\rho^N\) satisfies almost surely the equation, for all \(t \in [0,T]\),
  \begin{align*}
    \begin{split}
    &\; \qv{\rho_t^N, \varphi}_{L^2(\R^{dN})} \\
    =& \;  \qv{\rho_0^{\otimes N}, \varphi}_{L^2(\R^{dN})}
    - \sum\limits_{i=1}^N \int\limits_0^t   \bigg \langle \frac{1}{N} \sum\limits_{j=1}^N k(x_i-x_j) \rho_s^N , \nabla_{x_i} \varphi \bigg \rangle_{L^2(\R^{dN})}  \Id s  \\
    &    + \frac{1}{2} \sum\limits_{i=1}^N \sum\limits_{\alpha,\beta=1}^d  \int\limits_0^t \bigg\langle   [\sigma(s,x_i)\sigma(s,x_i)^{\mathrm{T}}]_{(\alpha,\beta)} \rho_s^N , \partial_{x_{i,\beta}}  \partial_{x_{i,\alpha}} \varphi \bigg\rangle_{L^2(\R^{dN})} \Id s \\
    &+  \frac{1}{2} \sum\limits_{i,j=1}^N \sum\limits_{\alpha,\beta=1}^d  \int\limits_0^t \bigg\langle   [\nu(s,x_i)\nu(s,x_j)^{\mathrm{T}}]_{(\alpha,\beta)} \rho_s^N , \partial_{x_{j,\beta}}  \partial_{x_{i,\alpha}} \varphi \bigg\rangle_{L^2(\R^{dN})} \Id s \\
    &    + \sum\limits_{i=1}^N  \sum\limits_{\alpha=1}^d \sum\limits_{\hat{l}=1}^{\tilde{m}} \int\limits_0^t \bigg\langle    \nu^{\alpha,\hat{l}}(s,x_i) \rho_s^{ N} ,  \partial_{x_{i,\alpha}} \varphi \bigg\rangle_{L^2(\R^{dN})}  W_s^{\hat{l}}. 
    \end{split}
  \end{align*}
\end{definition}

\begin{definition}
    A non-negative stochastic process \((\rho_t,t \ge 0)\) is a solution to the SPDE~\eqref{eq: d_dim_spde} with initial data \(\rho_0\), if 
    \begin{equation}
        \rho \in  L^2_{\cF^W}([0,T];H^1(\R^{d}) ) \cap S^\infty_{\cF^W}([0,T];L^1(\R^{d}) )
    \end{equation}
    and
    \begin{align*} 
    \begin{split}
     \qv{\rho_t, \varphi}_{L^2(\R^{dN})} 
    =&\;   \qv{\rho_0, \varphi}_{L^2(\R^{d})}
    -  \int\limits_0^t   \bigg \langle (k*\rho_s) \rho_s, \nabla_{x_i} \varphi \bigg \rangle_{L^2(\R^{d})}  \Id s  \\
   &    + \frac{1}{2} \sum\limits_{\alpha,\beta=1}^d  \int\limits_0^t \bigg\langle   [\sigma(s,z)\sigma(s,z)^{\mathrm{T}}]_{(\alpha,\beta)} \rho_s, \partial_{z_{\beta}}  \partial_{z_{\alpha}} \varphi \bigg\rangle_{L^2(\R^{d})} \Id s \\
    &+  \frac{1}{2} \sum\limits_{\alpha,\beta=1}^d  \int\limits_0^t \bigg\langle   [\nu(s,z)\nu(s,z)^{\mathrm{T}}]_{(\alpha,\beta)} \rho_s, \partial_{z_{\beta}}  \partial_{z_{\alpha}} \varphi \bigg\rangle_{L^2(\R^{d})} \Id s \\
    &    +  \sum\limits_{\alpha=1}^d \sum\limits_{\hat{l}=1}^{\tilde{m}} \int\limits_0^t \bigg\langle    \nu^{\alpha,\hat{l}}(s,z) \rho_s ,  \partial_{z_{\alpha}} \varphi \bigg\rangle_{L^2(\R^{d})}  W_s^{\hat{l}}. 
    \end{split}
  \end{align*}
  holds. 
\end{definition}

\begin{definition}\label{def: solution_of_chaotic_spde}
For a fix \(N\in \N\) a non-negative stochastic process \((\rho_t^{\otimes N} , t \ge 0)\) is called a (weak) solution of the SPDE~\eqref{eq: liouville_SPDE} with initial data \(\rho_0^{\otimes N}\) if
  \begin{equation*}
    \rho^{\otimes N} \in  L^2_{\cF^W}([0,T];H^1(\R^{dN}) ) \cap S^\infty_{\cF^W}([0,T];L^1(\R^{dN}) )
  \end{equation*}
  and, for any \(\varphi \in \testfunctions{\R^{dN}}\), \(\rho^{\otimes N}\) satisfies almost surely the equation, for all \(t \in [0,T]\),
  \begin{align} \label{eq: def_chaotic_spde}
    \begin{split}
    &\; \qv{\rho_t^{\otimes N}, \varphi}_{L^2(\R^{dN})} \\
    =& \;  \qv{\rho_0^{\otimes N}, \varphi}_{L^2(\R^{dN})}
    - \sum\limits_{i=1}^N \int\limits_0^t   \bigg \langle (k*\rho_s)(x_i) \rho_s^{\otimes{N}}, \nabla_{x_i} \varphi \bigg \rangle_{L^2(\R^{dN})}  \Id s  \\
   &    + \frac{1}{2} \sum\limits_{i=1}^N \sum\limits_{\alpha,\beta=1}^d  \int\limits_0^t \bigg\langle   [\sigma(s,x_i)\sigma(s,x_i)^{\mathrm{T}}]_{(\alpha,\beta)} \rho_s^{\otimes N} , \partial_{x_{i,\beta}}  \partial_{x_{i,\alpha}} \varphi \bigg\rangle_{L^2(\R^{dN})} \Id s \\
    &+  \frac{1}{2} \sum\limits_{i,j=1}^N \sum\limits_{\alpha,\beta=1}^d  \int\limits_0^t \bigg\langle   [\nu(s,x_i)\nu(s,x_j)^{\mathrm{T}}]_{(\alpha,\beta)} \rho_s^{\otimes N}, \partial_{x_{j,\beta}}  \partial_{x_{i,\alpha}} \varphi \bigg\rangle_{L^2(\R^{dN})} \Id s \\
    &    + \sum\limits_{i=1}^N  \sum\limits_{\alpha=1}^d \sum\limits_{\hat{l}=1}^{\tilde{m}} \int\limits_0^t \bigg\langle    \nu^{\alpha,\hat{l}}(s,x_i) \rho_s^{\otimes{N}} ,  \partial_{x_{i,\alpha}} \varphi \bigg\rangle_{L^2(\R^{dN})}  W_s^{\hat{l}} . 
    \end{split}
  \end{align}
\end{definition}

Throughout this article we make the following assumption on our diffusion coefficients \(\sigma,\nu\) and the inital data \(\rho_0\).
\begin{assumption} \label{ass: diffusion_coef}
\begin{enumerate}
\item For each \((i,l) \in \{1,\ldots,d\} \times \{1,\ldots,m\}\) we require \(\sigma^{i,l}(t,\cdot) \in C^1(\R^d)\) and for some positive constant \(C>0\) we have the following uniform estimate \[\sup\limits_{0\le t \le T} \norm{\sigma^{i,l}(t,\cdot)}_{C^1(\R^d)} \le C .\] 
\item \label{item: regularity_common_noise} For each \((i,\hat{l}) \in \{1,\ldots,d\} \times \{1,\ldots,\tilde{m}\}\) we require \(\nu^{i,\hat{l}}(t,\cdot) \in C^1(\R^d)\) and for some positive constant \(C>0\) we have the following uniform estimate \[\sup\limits_{0\le t \le T} \norm{\nu^{i,\hat{l}}(t,\cdot)}_{C^1(\R^d)} \le C .\] 
\item \label{item: divergence_free_common_noise} The diffusion \(\nu\) is divergence free, i.e. for each \(\hat{l}=1,\ldots,\tilde{m}\) we have 
\begin{equation*} 
\sum\limits_{\beta=1}^d \partial_{y_{\beta}} \nu^{\beta,\hat{l}} (t,y) = 0 . 
\end{equation*}
\item \label{item: cancelation_sigma_nu}For every \(\alpha \in \{1,\ldots,d\}\) we have 
\begin{align*} 
\begin{split}
\sum\limits_{\beta=1}^d \partial_{y_{\beta}} \sum\limits_{l=1}^m \sigma^{\alpha,l}(t,z) \sigma^{\beta,l}(t,z) &= 0,, \quad (t,z) \in [0,T] \times \R^d ,  \\
\sum\limits_{\beta=1}^d \partial_{y_{\beta}}  \sum\limits_{\hat{l}=1}^{\tilde{m}} \nu^{\alpha,\hat{l}}(t,z) \nu^{\beta,\hat{l}}(t,z) &= 0 , \quad (t,z) \in [0,T] \times \R^d . 
\end{split}
\end{align*}
\item \label{item: ellipticity_idiosy} \(\sigma\) satisfies the ellipticity condition. For all \(\lambda \in \R^d\) we have 
\begin{equation*} 
\sum\limits_{\alpha,\beta=1}^d      [\sigma_s(z)\sigma_s(z)^{\mathrm{T}}]_{(\alpha,\beta)}    \lambda_{\alpha} \lambda_{\beta} \ge \delta |\lambda|^2 . 
\end{equation*}

\item The initial data \(\rho_0 \in L^2(\R^d) \) satisfies the first moment estimate 
\begin{equation*}
    \int_{\R^d} |z|^2 \rho_0(z) \Id z< \infty. 
\end{equation*}

\end{enumerate}

\end{assumption}

A direct consequence of Assumption~\ref{ass: diffusion_coef}~\eqref{item: ellipticity_idiosy} is the existence and strong uniqueness of the interacting particle system~\eqref{eq: particle_system}. 

\begin{remark}
  Assumption~\eqref{item: divergence_free_common_noise} is a standard assumption in the field of SPDEs~\cite{CoghiFlandoli2016,HuangQui2021}, implying the non-randomness of the \(L^p\)-norms. Condition~\eqref{item: cancelation_sigma_nu} is necessary for reducing the conditional Liouville equation~\eqref{eq: liouville_SPDE} from a Fokker--Planck equation~\cite{BogachevKrylovRöckner2015} to a SPDE in divergence structure. The same applies to the non-local SPDE~\eqref{eq: chaotic_spde}. We require the ellipticity condition~\eqref{item: ellipticity_idiosy} for the existence of the SPDE's~\cite{RozovskyBorisL2018SES,Krylov1999AnAA}, analogously to the parabolic PDE theory.    
\end{remark}

\section{Well-posedness of the stochastic PDE's} \label{sec: existence_ass_spde}
In this section we demonstrate the existence and uniqueness of solutions to the linear SPDE~\eqref{eq: liouville_SPDE} and the non-linear SPDE~\eqref{eq: chaotic_spde} in the sense of Definition~\ref{def: solution_of_liouville_equation} and Definition~\ref{def: solution_of_chaotic_spde}. Additionally, we provide some a priori bounds uniform in the probablity space. 
We use similar techniques to~\cite{nikolaev2023hk} and for readers familiar with SPDE's the results should not come as a surprise.

We start by demonstarting the existence of the Liouville  equation~\eqref{eq: liouville_SPDE}. 
\begin{proposition}
Let \(k\in L^\infty(\R^d)\). Then, there exists a solution \(\rho^N \in L^2_{\cF^W}([0,T];H^1(\R^{d}))\) of the SPDE~\eqref{eq: liouville_SPDE} in the sense of Definition~\ref{def: solution_of_liouville_equation}. 
\end{proposition}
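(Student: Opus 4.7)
The plan is to view~\eqref{eq: liouville_SPDE} as a \emph{linear} SPDE on \(\R^{dN}\) whose coefficients are all uniformly bounded, and then appeal to classical \(L^2\)-theory for stochastic parabolic equations in the style of Rozovsky and Krylov, referenced already in the introduction. Since \(k\in L^\infty(\R^d)\), the drift coefficient \(\tfrac1N\sum_{j}k(x_i-x_j)\) is bounded by \(\norm{k}_{L^\infty(\R^d)}\), and by Assumption~\ref{ass: diffusion_coef} the coefficients \(\sigma,\nu\) are in \(C^1_b\). The equation is linear in \(\rho^N\) so, once stochastic parabolicity is verified, the existence of a weak solution is standard.

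First, I would rewrite~\eqref{eq: liouville_SPDE} in abstract form
\[
 \Id \rho^N_t = L_t \rho^N_t \Id t + \sum_{\hat l=1}^{\tilde m} M_t^{\hat l} \rho^N_t \Id W_t^{\hat l},
\]
where \(L_t\) collects the second order part plus the bounded drift, and \(M_t^{\hat l}\) is the first order transport operator coming from the \(\hat l\)-th column of \(\nu\). The crucial step is the coercivity (stochastic parabolicity) inequality. Testing the symmetric bilinear form associated to \(2L_t - \sum_{\hat l}(M_t^{\hat l})^\ast M_t^{\hat l}\) against a smooth \(u\) and integrating by parts, the \(\nu\nu^{\mathrm T}\)-contribution in the deterministic second order part is \emph{exactly} compensated by the quadratic variation produced by the transport noise; this is where Assumption~\ref{ass: diffusion_coef}\eqref{item: divergence_free_common_noise}–\eqref{item: cancelation_sigma_nu} are used. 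What remains as the effective dissipation is
\[
 \sum_{i=1}^N\sum_{\alpha,\beta=1}^d \int_{\R^{dN}} [\sigma(t,x_i)\sigma(t,x_i)^{\mathrm T}]_{(\alpha,\beta)}\,\partial_{x_{i,\alpha}}u\,\partial_{x_{i,\beta}}u \Id x,
\]
which, by the ellipticity of \(\sigma\) in Assumption~\ref{ass: diffusion_coef}\eqref{item: ellipticity_idiosy}, is bounded below by \(\delta\norm{\nabla u}^2_{L^2(\R^{dN})}\); the first order drift contributes only lower order terms, absorbable by Young's inequality using \(\norm{k}_{L^\infty}\).

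With coercivity and boundedness of coefficients established, the classical variational theorem yields a unique weak solution
\[
 \rho^N \in L^2_{\cF^W}([0,T]; H^1(\R^{dN})) \cap S^2_{\cF^W}([0,T]; L^2(\R^{dN})),
\]
starting from \(\rho_0^{\otimes N}\in L^2(\R^{dN})\). Non-negativity of \(\rho^N\) follows either from a comparison/truncation argument on a Galerkin scheme, or by identifying \(\rho^N_t\) with the conditional law of~\eqref{eq: particle_system} given \(\cF^W_t\), which in turn follows from a conditional It\^o--Kunita computation applied to \(\E[\varphi(X_t^1,\dots,X_t^N)\mid \cF^W_t]\) for \(\varphi\in\testfunctions{\R^{dN}}\). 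The mass conservation, i.e.\ the bound in \(S^\infty_{\cF^W}([0,T]; L^1(\R^{dN}))\), then comes from testing against a smooth cutoff \(\chi_R\) approximating \(1\), using again the divergence structure and the cancellation properties~\eqref{item: divergence_free_common_noise}--\eqref{item: cancelation_sigma_nu} to pass to the limit.

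The main obstacle is the careful bookkeeping of the stochastic parabolicity condition: one has to track how the \(\nu\nu^{\mathrm T}\) term in the drift combines with the It\^o correction from the transport noise on \(\R^{dN}\), particularly because the common noise is shared across all \(N\) particles, so the noise covariance couples different \(i,j\) blocks. The divergence-free assumption on \(\nu\) is precisely what allows the cross terms to integrate by parts cleanly and lets the effective second-order operator inherit ellipticity from \(\sigma\) alone; without it the equation would only be degenerate parabolic and the variational framework would not apply directly.
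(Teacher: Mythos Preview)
Your proposal is correct and follows essentially the same route as the paper: treat~\eqref{eq: liouville_SPDE} as a linear SPDE on \(\R^{dN}\) with bounded coefficients and invoke Krylov's \(L^2\)-theory~\cite[Theorem~5.1]{Krylov1999AnAA}. The paper's proof is terser---it simply checks that the drift term maps \(L^2\) to \(H^{-1}\) with norm controlled by \(N\norm{k}_{L^\infty}\) and then cites the theorem---whereas you additionally spell out the stochastic parabolicity verification and sketch the non-negativity and \(L^1\)-conservation arguments that the paper leaves implicit here.
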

\begin{proof}
Define \(f(t,u) = \sum\limits_{i=1}^N  \nabla_{x_i} \cdot \bigg( \frac{1}{N} \sum\limits_{j=1}^N k(x_i-x_j) u \bigg)\) for \(u \in H^1(\R^{dN})\). Then we find 
\begin{equation*}
\norm{f(t,u)}_{H^{-1}(\R^{dN})} 
\le \sum\limits_{i=1}^N \norm{   \frac{1}{N}\sum\limits_{j=1}^N  k(x_i-x_j) u  }_{L^2(\R^{dN})} 
\le N \norm{k}_{L^\infty(\R^d)} \norm{u}_{L^2(\R^{dN})}
\end{equation*}
and by linearity 
\begin{align*}
\norm{  f(t,u) -f(t,v) }_{H^{-1}(\R^{dN})} 
\le N \norm{k}_{L^\infty(\R^d)} \norm{u-v}_{L^2(\R^{dN})}
\end{align*}
for \(u,v \in H^1(\R^{dN})\). Applying~\cite[Theorem 5.1]{Krylov1999AnAA} proves the proposition. 
\end{proof}

\begin{lemma}[A priori \(L^2\)-estimate] \label{lem: priori_l2}
Let \(k\in L^\infty(\R^d)\) and let the non-negative process \(\rho \in L^2_{\cF^W}([0,T];H^1(\R^{d}))\) satisfy the SPDE 
\begin{align*}
\Id \rho_t
=& \;  \nabla \cdot (( k*u_t) \rho_t ))  \Id t 
- \nabla \cdot (\nu_t  \rho_t \Id W_t)  \\
&\; + \frac{1}{2}  \sum\limits_{\alpha,\beta=1}^d \partial_{z_{\alpha}}\partial_{z_{\beta}} \bigg( ( [\sigma_t\sigma_t^{\mathrm{T}}]_{(\alpha,\beta)} + [\nu_t \nu_t^{\mathrm{T}}]_{(\alpha,\beta)} ) \rho_t \bigg)  \Id t,
\end{align*}
where \(u_t\) is a predictable with respect to \(\cF^W\), \(L^1\)-valued process, which satisfies \(\norm{u_t}_{L^1(\R^d)} \le 1 ,\; \P\text{-a.s.}\) for all \(t \ge 0\). 
Then 
 \begin{equation} \label{eq: uniform_L2_bound}
  \norm{\rho_t}_{L^2(\R^d))}  \le C(T,d,\delta, \norm{k}_{L^{\infty}(\R^d)}) \norm{\rho_0}_{L^{2}(\R^d)}, \quad \P\text{-a.s.} 
 \end{equation}
 for all \(t \ge 0\). 
\end{lemma}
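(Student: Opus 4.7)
The plan is to apply It\^o's formula to \(\|\rho_t\|_{L^2(\R^d)}^2\) and exploit the ellipticity of \(\sigma\sigma^{\mathrm{T}}\) together with the cancellations built into Assumption~\ref{ass: diffusion_coef}. Since \(\rho \in L^2_{\cF^W}([0,T];H^1(\R^d))\), while the drift of the SPDE naturally lives in \(H^{-1}(\R^d)\), this is precisely the variational \((V,H,V^*)\) framework of Krylov~\cite{Krylov1999AnAA}, so It\^o's formula for the square norm applies and yields
\[
\d\|\rho_t\|_{L^2}^2
= 2\qv{\rho_t, \d\rho_t}_{L^2} + \d\qv{\rho}_t.
\]

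First I would rewrite the second-order drift in divergence form. Thanks to Assumption~\ref{ass: diffusion_coef}\eqref{item: cancelation_sigma_nu}, we have \(\partial_{z_\alpha}\partial_{z_\beta}(a^{\alpha\beta}\rho_t) = \partial_{z_\alpha}(a^{\alpha\beta}\partial_{z_\beta}\rho_t)\) with \(a := \sigma_t\sigma_t^{\mathrm{T}} + \nu_t\nu_t^{\mathrm{T}}\), so integration by parts against \(\rho_t\) produces \(-\int a^{\alpha\beta}\partial_{z_\alpha}\rho_t\,\partial_{z_\beta}\rho_t\,\d z\). For the quadratic variation, each stochastic coefficient \(B^{\hat l}\rho_t = -\partial_{z_\alpha}(\nu^{\alpha,\hat l}\rho_t)\) simplifies via Assumption~\ref{ass: diffusion_coef}\eqref{item: divergence_free_common_noise} to \(-\nu^{\alpha,\hat l}\partial_{z_\alpha}\rho_t\), whence
\[
\sum_{\hat l=1}^{\tilde m}\|B^{\hat l}\rho_t\|_{L^2}^2 = \int [\nu_t\nu_t^{\mathrm{T}}]_{(\alpha,\beta)}\,\partial_{z_\alpha}\rho_t\,\partial_{z_\beta}\rho_t\,\d z.
\]
This exactly cancels the \(\nu\nu^{\mathrm{T}}\) part of the second-order drift. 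Using ellipticity (Assumption~\ref{ass: diffusion_coef}\eqref{item: ellipticity_idiosy}) what survives is bounded above by \(-\delta\|\nabla\rho_t\|_{L^2}^2\).

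Next I would handle the transport term. Integration by parts gives
\[
2\qv{\rho_t,\nabla\cdot((k*u_t)\rho_t)}_{L^2} = -2\int (k*u_t)\cdot\rho_t\nabla\rho_t\,\d z,
\]
and since \(\|k*u_t\|_{L^\infty}\le\|k\|_{L^\infty}\|u_t\|_{L^1}\le\|k\|_{L^\infty}\), Young's inequality yields
\[
\Bigl|2\int (k*u_t)\cdot\rho_t\nabla\rho_t\,\d z\Bigr|
\le \tfrac{\delta}{2}\|\nabla\rho_t\|_{L^2}^2 + \tfrac{2}{\delta}\|k\|_{L^\infty}^2\|\rho_t\|_{L^2}^2.
\]
For the stochastic integral term in the It\^o expansion, \(2\qv{\rho_t,-\partial_{z_\alpha}(\nu^{\alpha,\hat l}\rho_t)}_{L^2} = -\int \nu^{\alpha,\hat l}\partial_{z_\alpha}\rho_t^2\,\d z\), which vanishes after one more integration by parts by the divergence-free assumption. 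Consequently the martingale part is identically zero pathwise, and the differential inequality is pointwise in \(\omega\).

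Collecting everything gives
\[
\|\rho_t\|_{L^2}^2 + \tfrac{\delta}{2}\int_0^t\|\nabla\rho_s\|_{L^2}^2\,\d s
\le \|\rho_0\|_{L^2}^2 + \tfrac{2}{\delta}\|k\|_{L^\infty}^2\int_0^t\|\rho_s\|_{L^2}^2\,\d s \qquad \P\text{-a.s.},
\]
and Gr\"onwall's lemma then yields~\eqref{eq: uniform_L2_bound} with \(C = \exp\bigl(\tfrac{2}{\delta}\|k\|_{L^\infty}^2 T\bigr)\). The main obstacle I anticipate is justifying the application of It\^o's formula to the square \(L^2\)-norm at the level of regularity \(\rho \in L^2_{\cF^W}([0,T];H^1)\) together with the nonlinear, \(\omega\)-dependent transport coefficient \(k*u_t\); this is standard in the Krylov variational framework once one verifies that the drift defines a coercive operator into \(H^{-1}\) with the required measurability, but the bookkeeping showing that the \(\omega\)-wise bound (rather than only an \(L^2(\Omega)\) bound) survives the martingale cancellation must be done carefully.
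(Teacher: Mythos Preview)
Your proof is correct and follows essentially the same approach as the paper: apply It\^o's formula to \(\|\rho_t\|_{L^2}^2\), use the divergence-free condition on \(\nu\) to kill the stochastic integral, let the quadratic variation cancel the \(\nu\nu^{\mathrm{T}}\) part of the second-order operator, invoke ellipticity of \(\sigma\sigma^{\mathrm{T}}\), bound the transport term via \(\|k*u_t\|_{L^\infty}\le\|k\|_{L^\infty}\) and Young's inequality, then Gr\"onwall. The only minor difference is that you invoke Assumption~\ref{ass: diffusion_coef}\eqref{item: cancelation_sigma_nu} to write \(\partial_\alpha\partial_\beta(a^{\alpha\beta}\rho)=\partial_\alpha(a^{\alpha\beta}\partial_\beta\rho)\) directly, whereas the paper instead expands the product rule and controls the leftover first-order term \(\sum_\beta\partial_\beta(a^{\alpha\beta})\,\rho\,\partial_\alpha\rho\) via the \(C^1\) bounds on \(\sigma,\nu\) and Young's inequality, picking up an extra \(C(d,\delta)\|\rho_s\|_{L^2}^2\) in the Gr\"onwall estimate; your route is slightly cleaner and yields a sharper constant.
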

\begin{proof}
Let us compute the norm \(\norm{\rho_t}_{L^2(\R^d)}\),
\begin{align*}
&\; \norm{\rho_t}_{L^2(\R^d)}^2  - \norm{\rho_0}_{L^2(\R^d)}^2  \\
= &\; -2 \int\limits_0^t \int_{\R^d} (( k*u_s) \rho_s) \cdot \nabla \rho_s\Id z \Id s  \\
&-   \sum\limits_{\alpha,\beta=1}^d \int\limits_0^t \int_{\R^d} \partial_{z_{\beta}}  \bigg( ( [\sigma_s\sigma_s^{\mathrm{T}}]_{(\alpha,\beta)} + [\nu_s \nu_s^{\mathrm{T}}]_{(\alpha,\beta)} ) \rho_s \bigg) \partial_{z_{\alpha}} \rho_s \Id z \Id s  \\
&+ \sum\limits_{\hat{l}=1}^{\tilde{m}}\int\limits_0^t \int_{\R^d} \bigg|\sum\limits_{\beta=1}^d \partial_{z_{\beta}} (\nu_s^{\beta,l} \rho_s) \bigg|^2 \Id  z \Id s  +  2  \sum\limits_{\hat{l}=1}^{\tilde{m}} \sum\limits_{\beta=1}^d  \int\limits_0^t  \int_{\R^d} \rho_s \partial_{z_{\beta}} (\nu_s^{\beta,\hat{l}} \rho_s ) \Id W_s^{\hat{l}} . 
\end{align*}  
By the divergence-free assumption of \(\nu\) we have 
\begin{align*}
\int_{\R^d} \rho_s \sum\limits_{\beta=1}^d  \partial_{z_{\beta}} (\nu_s^{\beta,\hat{l}} \rho_s)   \Id  z
&= \frac{1}{2}\int_{\R^d}  \sum\limits_{\beta=1}^d  \nu_s^{\beta,\hat{l}}   \partial_{z_{\beta}} (\rho_s ^2) \Id  z  \\
&=- \frac{1}{2}\int_{\R^d}  \sum\limits_{\beta=1}^d  \partial_{z_{\beta}} \nu_s^{\beta,\hat{l}}   \rho_s^2 \Id  z  \\
&=0 
\end{align*}
and therefore the stochastic integral vanishes. Again using the divergence-free assumption~\ref{ass: diffusion_coef}~\eqref{item: divergence_free_common_noise} we find  
\begin{align} \label{eq: moment_estimate_aux1}
\begin{split}
  \sum\limits_{\hat{l}=1}^{\tilde{m}}\int\limits_0^t \int_{\R^d} \bigg|\sum\limits_{\beta=1}^d \partial_{z_{\beta}} (\nu_s^{\beta,l} \rho_s ) \bigg|^2 \Id z \Id s  
&=  \sum\limits_{\hat{l}=1}^{\tilde{m}} \sum\limits_{\alpha,\beta=1}^d \int\limits_0^t \int_{\R^d} \partial_{z_{\alpha}} (\nu_s^{\alpha,l} \rho_s ) \partial_{z_{\beta}} (\nu_s^{\beta,l} \rho_s )  \Id z \Id s   \\
&=  \sum\limits_{\hat{l}=1}^{\tilde{m}} \sum\limits_{\alpha,\beta=1}^d \int\limits_0^t \int_{\R^d} \nu_s^{\alpha,l} \nu_s^{\beta,l}   \partial_{z_{\alpha}} \rho_s   \partial_{z_{\beta}}\rho_s   \Id  z \Id s  . 
 \end{split}
\end{align}
However, we also have 
 \begin{align*}
&\; \sum\limits_{\alpha,\beta=1}^d \int\limits_0^t \int_{\R^d}  \partial_{z_{\beta}}  \bigg( ( [\sigma_s\sigma_s^{\mathrm{T}}]_{(\alpha,\beta)} + [\nu_s \nu_s^{\mathrm{T}}]_{(\alpha,\beta)} ) \rho_s \bigg) \partial_{z_{\alpha}} \rho_s  \Id z \Id s  \\
= &\;   \sum\limits_{\alpha,\beta=1}^d  \int\limits_0^t \int_{\R^d} \partial_{z_{\beta}}  \bigg(  [\sigma_s\sigma_s^{\mathrm{T}}]_{(\alpha,\beta)} + [\nu_s \nu_s^{\mathrm{T}}]_{(\alpha,\beta)} \bigg) \rho_s  \partial_{z_{\alpha}} \rho_s  \Id z \Id s \\
&+  \sum\limits_{\alpha,\beta=1}^d  \int\limits_0^t \int_{\R^d} ( [\sigma_s\sigma_s^{\mathrm{T}}]_{(\alpha,\beta)} + [\nu_s \nu_s^{\mathrm{T}}]_{(\alpha,\beta)}) \partial_{z_{\beta}}  \rho_s \partial_{z_{\alpha}} \rho_s  \Id z \Id s .  
 \end{align*}
Consequently, the last term containing \(\nu\) is exactly the term \eqref{eq: moment_estimate_aux1} and therefore both terms cancel. For the first term we use the uniform bound on the derivatives and Young's inequality to estimated it by 
\begin{align*}
&\; \sum\limits_{\alpha,\beta=1}^d \bigg| \int\limits_0^t \int_{\R^d} \partial_{z_{\beta}}  \bigg(  [\sigma_s\sigma_s^{\mathrm{T}}]_{(\alpha,\beta)} + [\nu_s \nu_s^{\mathrm{T}}]_{(\alpha,\beta)} \bigg) \rho_s  \partial_{z_{\alpha}} \rho_s  \Id z \Id s\bigg| \\
&\;\le  C(d,\delta) \int\limits_0^t \norm{\rho_s}_{L^2(\R^d)}^2 + \frac{\delta}{2} \norm{\nabla\rho_s}_{L^2(\R^d)}^2 \Id s.
\end{align*}
 Combining the last estimates and plugging them in the evolution of the \(L^2\)-norm, we arrive at 
\begin{align*}
 \norm{\rho_t}_{L^2(\R^d)}^2  -\norm{\rho_0}_{L^2(\R^d)}^2
=&\;  -2 \int\limits_0^t \int_{\R^d} (( k*u_s) \rho_s) \cdot \nabla  \rho_s \Id z \Id s  + C(d,\delta)\norm{\rho_s}_{L^2(\R^d)}^2 \\
&- \sum\limits_{\alpha,\beta=1}^d  \int\limits_0^t \int_{\R^d}  [\sigma_s\sigma_s^{\mathrm{T}}]_{(\alpha,\beta)}  \partial_{z_{\beta}}  \rho_s \partial_{z_{\alpha}} \rho_s \Id z  + \frac{\delta}{2} \norm{\nabla \rho_s}_{L^2(\R^d)}^2 \Id s \\
\le &\;  2 \int\limits_0^t \norm{ k*u_s}_{L^\infty(\R^d)}^2 \norm{\rho_s}_{L^2(\R^d)}^2  \Id s   + C(d,\delta)\norm{\rho_s}_{L^2(\R^d)}^2 \Id s  \\
\le &\;   (2\norm{k}_{L^\infty(\R^d)}^2+ C(d,\delta)) \int\limits_0^t  \norm{\rho_s}^2_{L^2(\R^d)} \Id s , 
\end{align*}
 where we used the ellipticity condition in the last step. An application of Gronwall's lemma provides 
 \begin{equation*}
  \norm{\rho_t}_{L^2(\R^d)}^2  \le C(T,d,\delta, \norm{k}_{L^{\infty}(\R^d)}) \norm{\rho_0}^2_{L^2(\R^d)}, \quad \P\text{-a.s.} 
 \end{equation*}
 for all \(t \ge 0\).
\end{proof}

\begin{lemma}(A priori moment estimate) \label{lem: moment_estimate_spde}
Suppose we are in the setting of Lemma~\ref{lem: priori_l2}.  Then the following moment estimate holds
\begin{equation*}
\E\bigg ( \sup\limits_{0\le t \le T}  \bigg(\int_{\R^d} \rho_t(z) |z|^2 \Id z \bigg)^2 \bigg) 
 \le  C(T,\sigma,\nu,\norm{k}_{L^\infty(\R^d)})  \bigg(\int_{\R^d} \rho_0(x) |z|^2 \Id z\bigg)^2 . 
 \end{equation*} 
\end{lemma}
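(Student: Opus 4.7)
Set $M_t := \int_{\R^d} |z|^2 \rho_t(z) \Id z$ and $N_t := 1 + M_t$. The plan is to derive an Itô equation for $M_t$ by testing the SPDE against the weight $\varphi(z) = |z|^2$, bound the resulting drift and diffusion coefficients in terms of $N_t$, and then close an estimate on $\E \sup_{t \le T} N_t^2$ via Itô's formula applied to $N_t^2$, the Burkholder--Davis--Gundy inequality, Young's inequality, and Gronwall's lemma. Since $|z|^2 \notin \testfunctions{\R^d}$, I would first test against $\varphi_R(z) := |z|^2 \chi_R(z)$, where $\chi_R \in \testfunctions{\R^d}$ is a smooth cutoff of $B_R$ satisfying $\chi_R \equiv 1$ on $B_R$ and $|\nabla \chi_R| + R|\nabla^2 \chi_R| \le C/R$, and then pass to the limit $R \to \infty$ using the $L^2_{\cF^W}([0,T]; H^1(\R^d))$-regularity of $\rho$ and the a priori bound of Lemma~\ref{lem: priori_l2}.

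\textbf{Itô decomposition of $M_t$.} With $\nabla \varphi = 2z$ and $\partial_{z_\alpha}\partial_{z_\beta} \varphi = 2 \delta_{\alpha\beta}$, integration by parts in the weak formulation yields
\begin{align*}
M_t = M_0 + \int_0^t A_s \Id s + \sum_{\hat{l}=1}^{\tilde{m}} \int_0^t B_s^{\hat{l}} \Id W_s^{\hat{l}},
\end{align*}
where
\begin{align*}
A_s &= -2 \int_{\R^d} z \cdot (k*u_s)(z) \rho_s(z) \Id z + \sum_{\alpha=1}^d \int_{\R^d} \bigl[ \sigma_s \sigma_s^{\mathrm{T}} + \nu_s \nu_s^{\mathrm{T}} \bigr]_{(\alpha,\alpha)}(z) \rho_s(z) \Id z, \\
B_s^{\hat{l}} &= 2 \sum_{\alpha=1}^d \int_{\R^d} z_\alpha \nu^{\alpha,\hat{l}}(s,z) \rho_s(z) \Id z.
\end{align*}
A preliminary step is to verify mass conservation $\norm{\rho_s}_{L^1(\R^d)} = \norm{\rho_0}_{L^1(\R^d)}$ by testing against a cutoff of $\varphi \equiv 1$; the divergence structure and Assumption~\ref{ass: diffusion_coef}~\eqref{item: divergence_free_common_noise} make the drift and stochastic contributions vanish. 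Combined with $\norm{k*u_s}_{L^\infty(\R^d)} \le \norm{k}_{L^\infty(\R^d)}$, the Cauchy--Schwarz estimate $\int_{\R^d} |z| \rho_s \Id z \le \norm{\rho_s}_{L^1}^{1/2} M_s^{1/2}$, and the boundedness of $\sigma, \nu$ (Assumption~\ref{ass: diffusion_coef}) this gives $|A_s| \le C N_s$ and $|B_s|^2 \le C N_s$ with $C$ depending only on $T, d, \sigma, \nu, \norm{k}_{L^\infty(\R^d)}$.

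\textbf{Closing the estimate.} Itô's formula applied to $N_t^2$ gives
\begin{align*}
N_t^2 = N_0^2 + \int_0^t \bigl( 2 N_s A_s + |B_s|^2 \bigr) \Id s + \int_0^t 2 N_s B_s \Id W_s,
\end{align*}
and the bounds of the previous step together with $N_s \ge 1$ control the drift integrand by $C N_s^2$. Taking the supremum and expectation, Burkholder--Davis--Gundy reduces the stochastic integral to a term at most
\begin{align*}
C \, \E \Bigl( \int_0^T 4 N_s^2 |B_s|^2 \Id s \Bigr)^{1/2} \le C' \, \E \Bigl( \sup_{s \le T} N_s \cdot \int_0^T N_s^2 \Id s \Bigr)^{1/2}.
\end{align*}
The key algebraic observation $\sup_s N_s \le \sup_s N_s^2$ (valid because $N_s \ge 1$) combined with Young's inequality $\sqrt{xy} \le x/(2\epsilon) + \epsilon y/2$ lets me absorb a small multiple of $\E \sup_{s \le T} N_s^2$ into the left-hand side, leaving a remainder $C'' \int_0^T \E \sup_{r \le s} N_r^2 \Id s$. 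Gronwall then delivers $\E \sup_{t \le T} N_t^2 \le C(T, \sigma, \nu, \norm{k}_{L^\infty(\R^d)}) N_0^2$, and since $M_t \le N_t$ and $N_0 = 1 + M_0$ the stated bound follows after enlarging the generic constant.

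\textbf{Main obstacle.} The essential difficulty is that the martingale part of $M_t^2$ has quadratic variation scaling like $N_s^3$, which sits strictly between $N_s^2$ and $N_s^4$. A naive BDG bound produces a factor of $(\sup_s N_s)^{1/2}$, whose half-power has the wrong homogeneity to be reabsorbed into $\E \sup_s N_s^2$, so a direct Gronwall loop on $M_t^2$ fails to close. Passing from $M_t$ to $N_t \ge 1$ and exploiting the monotonicity $\sup_s N_s \le \sup_s N_s^2$ is precisely the device that reconciles the homogeneities. A subsidiary technical point is the rigorous justification of testing against the unbounded weight $|z|^2$: the cutoff remainders pair $\nabla \chi_R$ and $\nabla^2 \chi_R$ with $(k*u_s)\rho_s$, $\sigma_s \sigma_s^{\mathrm{T}} \rho_s$ and $\nu_s \rho_s$, and their vanishing as $R \to \infty$ rests on the $H^1$-integrability in space and time supplied by Lemma~\ref{lem: priori_l2}.
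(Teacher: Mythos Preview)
Your core argument—passing to $N_t = 1 + M_t$, applying It\^o to $N_t^2$, and closing in one shot via BDG plus absorption—is correct and in fact streamlines the paper's proof, which proceeds in two stages: first a Gronwall loop for $\sup_t \E(M_t^2)$ using the It\^o isometry on the squared stochastic integral, then a separate upgrade to $\E(\sup_t M_t^2)$ via Doob's maximal inequality. Your single-pass route avoids that detour, and the device $N_s \ge 1 \Rightarrow \sup_s N_s \le \sup_s N_s^2$ is exactly what reconciles the homogeneities.

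There is, however, a real gap in the cutoff step. With $\varphi_R = |z|^2 \chi_R$ one has $|\nabla \varphi_R| \le C|z|$ on $B_{2R}$, so the $R$-level coefficients $A_s^R, B_s^R$ are controlled only by $\int_{B_{2R}} |z|\rho_s$ and $\int_{B_{2R}} |z|^2 \rho_s$. These quantities are \emph{not} dominated by $C(1 + M_s^R)$ uniformly in $R$: on the annulus $B_{2R}\setminus B_R$ the factor $\chi_R$ may vanish while $|z|^2 \sim R^2$, so the ratio $|\nabla \varphi_R|^2/\varphi_R$ blows up at the outer edge. Consequently the bounds $|A_s^R|\le C N_s^R$ and $|B_s^R|^2\le C N_s^R$ fail and the Gronwall loop does not close at the approximate level. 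Your fallback—invoking the $H^1$-regularity of Lemma~\ref{lem: priori_l2} to pass to the limit—is circular: $\rho\in L^2_{\cF^W}([0,T];H^1)$ carries no information on spatial decay, so you cannot conclude $\int |z|^2 \rho_s<\infty$ without the very estimate being proved. The paper handles this by constructing approximants $g_n$ with the specific properties $|\nabla g_n|^2 \le C g_n$ and $|\Delta g_n|$ uniformly bounded, which are precisely what makes the $n$-level estimate close. A simple fix compatible with your strategy is to truncate from above rather than multiply by a cutoff: take $\varphi_R(z) = R^2\psi(|z|^2/R^2)$ with $\psi$ smooth, concave, $\psi(r)=r$ for $r\le 1$ and $\psi$ bounded; then $|\nabla \varphi_R|^2 \le C\varphi_R$ and $|\nabla^2\varphi_R|\le C$ hold pointwise, and your argument goes through verbatim at the $R$-level before letting $R\to\infty$ by monotone convergence.
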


\begin{proof}
 The core idea is to use \(|z|^2\) as a test function. To that end, we take a sequence of radial non-negative anti-symmetric smooth functions \((g_n, n \in \N)\) with \(g_n \in C_c^2(\R^d)\) for all \( n \in \N \), such that \(g_n\) grows to \(|z|^2\) as \(n \to \infty\) and \(|\nabla g_n|^2 \le C g_n\) and \(|\Delta g_n| \) is uniformly bounded in \(n \in \N\).
For instance one can choose 
  \begin{align*}
    \chi_{n}(z) :=
    \begin{cases}
    |z| \quad   &|z| \ge \frac{1}{n}  \\
    -n^3\frac{|z|^4}{8}  + n \frac{3|z|^2}{4} + \frac{3}{8n} \quad & |z| \le \frac{1}{n}
    \end{cases}
  \end{align*}
and let \((\zeta_n, n \in \N)\) be a sequence of compactly supported cut-off function defined by \(\zeta_n(x) = \zeta(x/n)\), where \(\zeta\) is a smooth function with support in the ball of radius two and has value one in the unit ball.
The reader can veriy that \(g_n= \chi_n^2 \zeta_n\) satisfies the above properties.
  Plugging \(g_n\) as our test function, we obtain 
  \begin{align*}
&\; \qv{\rho_t, g_n}_{L^2(\R^d)} \\
=&\;  \qv{\rho_0, g_n}_{L^2(\R^d)}
    - \int\limits_0^t   \bigg \langle (k*u_s) \rho_s, \nabla g_n \bigg \rangle_{L^2(\R^{d})}  \Id s      +  \sum\limits_{\alpha=1}^d \sum\limits_{\hat{l}=1}^{\tilde{m}} \int\limits_0^t \bigg\langle    \nu^{\alpha,\hat{l}}_s \rho_s ,  \partial_{z_\alpha} g_n \bigg\rangle_{L^2(\R^d)}  W_s^{\hat{l}}\\
    &  - \frac{1}{2}  \int\limits_0^t \bigg\langle  ( \sigma_s^2 +  \nu_s^2 ) \rho_s  ,  \Delta  g _n \bigg\rangle_{L^2(\R^d)} \Id s . 
  \end{align*}
  The first term, can be simply estimated by
  \begin{align*}
  \int\limits_0^t   \bigg \langle (k*u_s) \rho_s, \nabla g_n \bigg \rangle_{L^2(\R^{d})}  \Id s  
  &\le C \norm{k}_{L^\infty(\R^d)}^2 + \int\limits_{0}^{t} \qv{\rho_s, |\nabla g_n|^2}_{L^2(\R^d)} \Id s \\
  &\le C \norm{k}_{L^\infty(\R^d)}^2 + C \int\limits_{0}^{t} \qv{\rho_s, g_n}_{L^2(\R^d)} \Id s . 
  \end{align*}
  
  For the other term we obtain 
  \begin{equation*}
  \int\limits_0^t \bigg\langle ( \sigma_s^2 +  \nu_s^2 ) \rho_s   , 
\Delta g _n \bigg\rangle_{L^2(\R^d)} \Id s  
 \le  C \int\limits_0^t \int_{\R^d} \rho_s(x)   \Id x \Id s  
\le CT  ,
\end{equation*}   
where we used the uniform bound of \(|\Delta g_n|\).  
Now, we take the square and then the expectation to arrive at
\begin{align}
\begin{split} \label{eq: moment_est_aux1}
\E \big(  \qv{\rho_t, g_n}_{L^2(\R^d)} ^2 \big) 
\le&\; \qv{\rho_0, g_n}_{L^2(\R^d)} 
+ C T^2 \norm{k}_{L^\infty(\R^d)}^4 
+ C T^2 +C t^{\frac{1}{2}} \int\limits_{0}^{t} \E \big( \qv{\rho_s, g_n}_{L^2(\R^d)}^2 \big) \Id s  \\
&+\E \bigg(  \bigg| \sum\limits_{\alpha=1}^d  \sum\limits_{\hat{l}=1}^{\tilde{m}}  \int\limits_0^t \bigg\langle    \nu^{\alpha,\hat{l}}_s \rho_s ,  \partial_{z_\alpha} g_n \bigg\rangle_{L^2(\R^d)}  W_s^{\hat{l}}\bigg|^2 \bigg) . 
\end{split}
\end{align}
Using the BDG-inequality to estimate the stochastic integral we arrive at
\begin{align*}
&\; \E \bigg(  \bigg| \sum\limits_{\alpha=1}^d  \sum\limits_{\hat{l}=1}^{\tilde{m}}  \int\limits_0^t \bigg\langle    \nu^{\alpha,\hat{l}}_s \rho_s ,  \partial_{z_\alpha} g_n \bigg\rangle_{L^2(\R^d)}  W_s^{\hat{l}}\bigg|^2 \bigg)  \\
\le &\; \sum\limits_{\alpha,\beta=1}^d  \sum\limits_{\hat{l}=1}^{\tilde{m}} \E \bigg(  \int\limits_0^t   \bigg\langle    \nu^{\alpha,\hat{l}}_s \rho_s ,  \partial_{z_\alpha} g_n \bigg\rangle_{L^2(\R^d)}  \bigg\langle    \nu^{\beta,\hat{l}}_s \rho_s ,  \partial_{z_\beta} g_n \bigg\rangle_{L^2(\R^d)}  \Id  s \bigg) \\
\le &\; C(\norm{\nu}_{L^\infty(\R^d)}, \tilde{m} ,d ) \sum\limits_{\alpha=1}^d  \E \bigg(  \int\limits_0^t   \bigg| \langle   \rho_s ,  \partial_{z_\alpha} g_n \rangle_{L^2(\R^d)} \bigg|^2  \Id  s \bigg) \\
\le &\; C(\norm{\nu}_{L^\infty(\R^d)}, \tilde{m} ,d ) \; 
\E \bigg(  \int\limits_0^t  \langle   \rho_s , | \nabla g_n|^2 \rangle_{L^2(\R^d)}  \Id  s \bigg) \\
\le &\; C(\norm{\nu}_{L^\infty(\R^d)}, \tilde{m}, d )   \int\limits_0^t  \E \bigg(    \langle   \rho_s , g_n\rangle_{L^2(\R^d)}  \bigg)   \Id  s . 
\end{align*}
An application of Gronwall's lemma proves 
\begin{equation*}
    \sup\limits_{0 \le t \le T} \E\bigg( \bigg|\int_{\R^d} \rho_t(z) |z|^2 \Id z \bigg|^2 \,  \bigg) \le C(T,\sigma,\nu,\norm{k}_{L^\infty(\R^d)}) \bigg( \int_{\R^d} \rho_0(z) |z|^2 \Id z \bigg)^2.
\end{equation*}
Now, we can use this inequality to estimate the case, where the supremum is inside the expectation. Till inequality~\ref{eq: moment_est_aux1} we follow the same steps but with the difference that the supremum is now inside the expectation. We notice that we can apply Doob's maximal inequality to estimate the stochastic integral with the previous bound. Thus, obtaining the bound 
\begin{equation*}
\E\bigg ( \sup\limits_{0\le t \le T}  \bigg(\int_{\R^d} \rho_t(z) |z|^2 \Id z \bigg)^2 \bigg) 
 \le  C(T,\sigma,\nu,\norm{k}_{L^\infty(\R^d)}) \bigg(\int_{\R^d} \rho_0(x) |z|^2 \Id z\bigg)^2. 
 \end{equation*}

\end{proof}

We are ready to prove that the non-local \(d\)-dimensional SPDE~\eqref{eq: d_dim_spde} has a unique solution. 

\begin{proposition}[Existence of \(d\)-dimensional SPDE \(\rho\)] \label{prop: existence_d_spde}
Let \(k\in L^2(\R^d) \cap L^\infty(\R^d)\). Then the SPDE~\eqref{eq: d_dim_spde} has a unique solution solution in the space \( L^2_{\cF^W}([0,T];H^1(\R^{d}))\) satisfying 
\begin{equation} \label{eq: spde_h1_estimate}
    \norm{\rho}_{ L^2_{\cF^W}([0,T];H^1(\R^{d}))} 
\le C(T,\delta,d,\norm{k}_{L^2(\R^d) }, \norm{k}_{L^\infty(\R^d)}) \norm{\rho_0}_{L^2(\R^d)} . 
\end{equation}

\end{proposition}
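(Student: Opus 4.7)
The plan is to construct a solution by Picard iteration, freezing the convolution drift at the previous iterate. Set \(\rho^{(0)}_t := \rho_0\) and, for \(n\ge 0\), let \(\rho^{(n+1)}\) solve the linear SPDE obtained from \eqref{eq: d_dim_spde} by replacing the outer \(\rho_t\) in the nonlinearity by \(\rho^{(n)}_t\), with initial datum \(\rho_0\). Integrating the divergence form of the equation against \(1\) preserves mass, while a truncation argument on the negative part \((\rho^{(n)})^-\), exploiting the divergence-free structure of \(\nu\) from Assumption~\ref{ass: diffusion_coef}~\eqref{item: divergence_free_common_noise} so that the noise produces no zero-order term, shows the iterate stays non-negative. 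Hence \(\norm{\rho^{(n)}_t}_{L^1(\R^d)}=1\) almost surely and \(\norm{k*\rho^{(n)}_t}_{L^\infty(\R^d)}\le\norm{k}_{L^\infty(\R^d)}\), so the frozen drift is a bounded predictable coefficient. Existence of each \(\rho^{(n+1)}\in L^2_{\cF^W}([0,T];H^1(\R^d))\) then follows from Krylov's linear SPDE theory~\cite[Theorem~5.1]{Krylov1999AnAA} together with the ellipticity of \(\sigma\) from Assumption~\ref{ass: diffusion_coef}~\eqref{item: ellipticity_idiosy}.

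The uniform bound \(\sup_n\norm{\rho^{(n)}}_{L^2_{\cF^W}([0,T];H^1(\R^d))}\le C\norm{\rho_0}_{L^2(\R^d)}\) follows directly from Lemma~\ref{lem: priori_l2} applied with \(u_t=\rho^{(n)}_t\), retaining the gradient contribution produced by the ellipticity step that is dropped at the end of that proof. For the contraction step, set \(w^{(n+1)}:=\rho^{(n+1)}-\rho^{(n)}\) and repeat the energy computation of Lemma~\ref{lem: priori_l2} on \(\norm{w^{(n+1)}_t}_{L^2(\R^d)}^2\): the stochastic integral and the \(\nu\)-It\^o correction again cancel, ellipticity contributes a good term \(\delta\norm{\nabla w^{(n+1)}_s}_{L^2(\R^d)}^2\), and the only new contribution is the nonlinear coupling
\begin{equation*}
-2\int_{\R^d}(k*w^{(n)}_s)\,\rho^{(n)}_s\cdot\nabla w^{(n+1)}_s\,\Id z \le 2\norm{k*w^{(n)}_s}_{L^\infty(\R^d)}\norm{\rho^{(n)}_s}_{L^2(\R^d)}\norm{\nabla w^{(n+1)}_s}_{L^2(\R^d)}.
\end{equation*}
The hypothesis \(k\in L^2(\R^d)\) enters crucially through Young's convolution inequality \(\norm{k*w^{(n)}}_{L^\infty(\R^d)}\le\norm{k}_{L^2(\R^d)}\norm{w^{(n)}}_{L^2(\R^d)}\). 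Absorbing \(\norm{\nabla w^{(n+1)}_s}_{L^2(\R^d)}\) into the ellipticity term by a Young inequality yields, after time integration,
\begin{equation*}
\norm{w^{(n+1)}_t}_{L^2(\R^d)}^2 + \tfrac{\delta}{2}\int_0^t\norm{\nabla w^{(n+1)}_s}_{L^2(\R^d)}^2\Id s \le C_1\int_0^t\norm{w^{(n+1)}_s}_{L^2(\R^d)}^2\Id s + C_2\int_0^t\norm{w^{(n)}_s}_{L^2(\R^d)}^2\Id s,
\end{equation*}
so Gronwall provides a contraction in \(L^2_{\cF^W}([0,T_0];L^2(\R^d))\) on a short interval \(T_0\); chaining short-time solutions on consecutive sub-intervals produces a solution on all of \([0,T]\). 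Passing to the limit in the weak formulation and applying the same energy estimate to the difference of two solutions gives existence and uniqueness together with the claimed bound \eqref{eq: spde_h1_estimate}.

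The main obstacle is globalising the contraction in time: the coupling constant depends on the uniform-in-\(\omega\) bound \(\sup_t\norm{\rho^{(n)}_t}_{L^2(\R^d)}\), so the energy estimate only delivers a small-time contraction; one must patch short-time solutions together, using the fact that the \(H^1\)-bound from Lemma~\ref{lem: priori_l2} depends only on the \(L^2\)-norm of the initial datum at each sub-interval, so no blow-up occurs. A secondary technical point is the preservation of non-negativity and unit mass along the iteration, needed to justify \(\norm{k*\rho^{(n)}}_{L^\infty(\R^d)}\le\norm{k}_{L^\infty(\R^d)}\) and thereby to apply Lemma~\ref{lem: priori_l2}; this can be handled either by the truncation argument above or, more cleanly, by interpreting each \(\rho^{(n+1)}\) as the conditional density of a linear SDE with drift \(-(k*\rho^{(n)})\), which makes both properties automatic.
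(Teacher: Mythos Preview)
Your proposal is correct and follows the same Picard-iteration strategy as the paper, with the same energy computation and the same crucial use of \(k\in L^2(\R^d)\) via Young's convolution inequality to control \(\norm{k*w^{(n)}}_{L^\infty}\). Two minor differences are worth noting. First, the ``main obstacle'' you flag---globalising the contraction by patching short-time solutions---is avoided in the paper: after Gronwall one has \(\E\norm{w^{(n+1)}_t}_{L^2}^2 \le C\int_0^t \E\norm{w^{(n)}_s}_{L^2}^2\,\Id s\) on all of \([0,T]\), and iterating produces the factor \((Ct)^n/n!\), so \(\sum_n\sup_t\E\norm{w^{(n)}_t}_{L^2}^2<\infty\) directly without any restriction on \(T\). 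Second, for the passage to the limit the paper does not argue at the level of the weak formulation; instead it introduces the solution \(\hat\rho\) of the \emph{linear} SPDE with drift \(k*\rho\) (the already-constructed limit) and uses Krylov's stability estimate~\cite[Theorem~5.1]{Krylov1999AnAA} together with \(\norm{k*(\rho-\rho^{(n)})\hat\rho}_{L^2}\le C\norm{k}_{L^2}\norm{\rho-\rho^{(n)}}_{L^2}\) to conclude \(\rho^{(n)}\to\hat\rho\), whence \(\hat\rho=\rho\). Your direct limit argument also works, but the paper's linearisation trick is what later gets reused in Section~\ref{sec: stability_spde} for the stability result.
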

\begin{proof}
Let us proof the existence by a Picard-Lindelöf iteration. Let us define for \(n \in \N\) the following SPDE 
\begin{align}\label{eq: existence_spde_aux1}
\begin{split} 
\Id \rho^n_t
=& \;  \nabla \cdot (( k*\rho^{n-1}_t) \rho^n_t ))  \Id t 
- \nabla \cdot (\nu_t  \rho^n \Id W_t)  \\
&\; + \frac{1}{2}  \sum\limits_{\alpha,\beta=1}^d \partial_{z_{\alpha}}\partial_{z_{\beta}} \bigg( ( [\sigma_t\sigma_t^{\mathrm{T}}]_{(\alpha,\beta)} + [\nu_t \nu_t^{\mathrm{T}}]_{(\alpha,\beta)} ) \rho^n_t \bigg)  \Id t
\end{split}
\end{align}
with initial value \(\rho_0\) and \(\rho^0 = \rho_0\). The initial value holds for all SPDE's in this proof. Then the SPDE is linear and by the estimate 
\begin{equation} \label{eq: exisetence_spde_aux2}
\norm{\nabla \cdot ((k*\rho^0) u )}_{H^{-1}(\R^d)} 
\le \norm{k*\rho^0}_{L^\infty(\R^d)} \norm{u}_{L^2(R^d)}
\le \norm{k}_{L^\infty(\R^d)} \norm{u}_{L^2(R^d)}
\end{equation}
for \(u \in L^2_{\cF^W}([0,T];H^1(\R^{d}) )\) we have a solution of the SPDE by~\cite[Theorem 5]{Krylov1999AnAA} in the case \(n=1\), therein. 
Furthermore, it is easy consequence of the divergence structure and the maximum principle that \(\rho^1\) is non-negative and satisfies mass conservation 
\begin{equation*}
  \norm{\rho^1_t}_{L^1(\R^d)} = \norm{\rho_0}_{L^1(\R^d)} = 1, \quad \P\text{-a.s.},
 \end{equation*}
  for \(t \in [0,T]\). A more detailed proof is given in~\cite{nikolaev2023hk} in a similar setting. Hence, since \(\rho^1\) has measure one uniform in time and \(\Omega\), we can derive, using the same arguments for each \(n\in \N\), a non-negative solution that satisfies mass conservation and the uniform estimate 
\begin{equation} \label{eq: sode_h1_picard_estimate}
\norm{\rho^n}_{ L^2_{\cF^W}([0,T];H^1(\R^{d}))} 
\le C(T,\delta,d) \norm{\rho_0}_{L^2(\R^d)}
\end{equation}
as provided by~\cite[Theorem 5.1]{Krylov1999AnAA}. Additionally, \(\rho^n\) satisfies the \(L^2\)-bound~\eqref{eq: uniform_L2_bound} uniform in \(n \in \N\).
Let us consider the difference of the SPDE's 
\begin{align*}
&\; \Id (\rho^n_t- \rho_t^{n-1}) \\
=& \; \nabla \cdot (( k*\rho^{n-1}_t)( \rho^n_t- \rho_t^{n-1}) + ( k*(\rho_t^{n-1}-\rho^{n-2}_t)) \rho^{n-1}_t )   \Id t  - \nabla \cdot (\nu_t  (\rho^n_t- \rho^{n-1}_t) \Id W_t)  \\
& + \frac{1}{2}  \sum\limits_{\alpha,\beta=1}^d \partial_{z_{\alpha}}\partial_{z_{\beta}} \bigg( ( [\sigma_t\sigma_t^{\mathrm{T}}]_{(\alpha,\beta)} + [\nu_t \nu_t^{\mathrm{T}}]_{(\alpha,\beta)} ) (\rho^n_t-\rho^{n-1}_t) \bigg)  \Id t. 
\end{align*}
  Applying It{\^o}'s formula~\cite{krylov2010ito}, we obtain
  \begin{align*}
&\; \norm{\rho_t^n-\rho^{n-1}_t}_{L^2(\R^d)}^2 \\
= &\; -2 \int\limits_0^t \int_{\R^d} (( k*\rho^{n-1}_s) (\rho^n_s - \rho^{n-1}_s) - (k*(\rho^{n-1}_s - \rho^{n-2}_s))\rho^{n-1}_s  )  \cdot \nabla  (\rho^n_s - \rho^{n-1}_s)\Id z \Id s  \\
&-   \sum\limits_{\alpha,\beta=1}^d \int\limits_0^t \int_{\R^d} \partial_{z_{\beta}}  \bigg( ( [\sigma_s\sigma_s^{\mathrm{T}}]_{(\alpha,\beta)} + [\nu_s \nu_s^{\mathrm{T}}]_{(\alpha,\beta)} ) (\rho^n_s- \rho^{n-1}_s) \bigg) \partial_{z_{\alpha}} (\rho^n_s - \rho^{n-1}_s) \Id z \Id s  \\
&+ \sum\limits_{\hat{l}=1}^{\tilde{m}}\int\limits_0^t \int_{\R^d} \bigg|\sum\limits_{\beta=1}^d \partial_{z_{\beta}} (\nu_s^{\beta,\hat{l}} (\rho^n_s- \rho^{n-1}_s) ) \bigg|^2 \Id z \Id s  \\
&+  2 \sum\limits_{\hat{l}=1}^{\tilde{m}} \sum\limits_{\beta=1}^d  \int\limits_0^t  \int_{\R^d} (\rho^n_s-\rho_s^{n-1}) \partial_{z_{\beta}} (\nu_s^{\beta,\hat{l}} (\rho^n_s- \rho^{n-1}_s) ) \Id x \Id  W_s^{\hat{l}} . 
  \end{align*}
Similar as before the stochastic integral vanishes by the divergence free assumption on \(\nu\).  
Again using the divergence-free assumption~\ref{ass: diffusion_coef}~\eqref{item: divergence_free_common_noise} we find  
\begin{align} \label{eq: d_spde_prood_aux1}
\begin{split}
&\;  \sum\limits_{\hat{l}=1}^{\tilde{m}}\int\limits_0^t \int_{\R^d} \bigg|\sum\limits_{\beta=1}^d \partial_{z_{\beta}} (\nu_s^{\beta,\hat{l}} (\rho^n_s- \rho^{n-1}_s) ) \bigg|^2 \Id  z \Id s   \\
 =&\;  \sum\limits_{\hat{l}=1}^{\tilde{m}} \sum\limits_{\alpha,\beta=1}^d \int\limits_0^t \int_{\R^d} \partial_{z_{\alpha}} (\nu_s^{\alpha,\hat{l}} (\rho^n_s- \rho^{n-1}_s) ) \partial_{z_{\beta}} (\nu_s^{\beta,\hat{l}} (\rho^n_s- \rho^{n-1}_s) )  \Id  z \Id s   \\
 =&\;  \sum\limits_{\hat{l}=1}^{\tilde{m}} \sum\limits_{\alpha,\beta=1}^d \int\limits_0^t \int_{\R^d} \nu_s^{\alpha,\hat{l}} \nu_s^{\beta,\hat{l}}   \partial_{z_{\alpha}} (\rho^n_s- \rho^{n-1}_s)   \partial_{z_{\beta}}(\rho^n_s- \rho^{n-1}_s)   \Id z \Id s  . 
 \end{split}
\end{align}

However, we also have 
 \begin{align*}
&\; \sum\limits_{\alpha,\beta=1}^d \int\limits_0^t \int_{\R^d}  \partial_{z_{\beta}}  \bigg( ( [\sigma_s\sigma_s^{\mathrm{T}}]_{(\alpha,\beta)} + [\nu_s \nu_s^{\mathrm{T}}]_{(\alpha,\beta)} ) (\rho^n_s - \rho^{n-1}_s) \bigg) \partial_{z_{\alpha}} (\rho^n_s - \rho^{n-1}_s)  \Id z  \Id s  \\
= &\;   \sum\limits_{\alpha,\beta=1}^d  \int\limits_0^t \int_{\R^d} \partial_{z_{\beta}}  \bigg(  [\sigma_s\sigma_s^{\mathrm{T}}]_{(\alpha,\beta)} + [\nu_s \nu_s^{\mathrm{T}}]_{(\alpha,\beta)} \bigg) (\rho^n_s  - \rho^{n-1}_s)  \partial_{z_{\alpha}} (\rho^n_s -  \rho^{n-1}_s)  \Id z \Id s \\
&+  \sum\limits_{\alpha,\beta=1}^d  \int\limits_0^t \int_{\R^d} ( [\sigma_s\sigma_s^{\mathrm{T}}]_{(\alpha,\beta)} + [\nu_s \nu_s^{\mathrm{T}}]_{(\alpha,\beta)}) \partial_{z_{\beta}}  (\rho^n_s - \rho^{n-1}_s)  \partial_{z_{\alpha}} (\rho^n_s - \rho^{n-1}_s)  \Id z \Id s .  
 \end{align*}
The last term with \(\nu\) is exactly the term \eqref{eq: d_spde_prood_aux1} and therefore both term cancel. 
For the first term we use the uniform bound on the derivatives and Young's inequality to estimated it by 
\begin{align*}
&\; \sum\limits_{\alpha,\beta=1}^d \bigg| \int\limits_0^t \int_{\R^d} \partial_{z_{\beta}}  \bigg(  [\sigma_s\sigma_s^{\mathrm{T}}]_{(\alpha,\beta)} + [\nu_s \nu_s^{\mathrm{T}}]_{(\alpha,\beta)} \bigg) (\rho^n_s - \rho^{n-1}_s)  \partial_{z_{\alpha}} (\rho^n_s - \rho^{n-1}_s)  \Id z \Id s\bigg| \\
&\;\le  C(d,\delta) \norm{\rho^n_s - \rho^{n-1}_s}_{L^2(\R^d)}^2 + \frac{\delta}{2} \norm{\nabla(\rho^n_s - \rho^{n-1}_s)}_{L^2(\R^d)}^2.
\end{align*}
Combining the last estimates we arrive at 
\begin{align*}
&\; \norm{\rho^n_t - \rho^{n-1}_t}_{L^2(\R^d)}^2 \\
= &\; -2 \int\limits_0^t \int_{\R^d} (( k*\rho^{n-1}_s) (\rho^n_s - \rho^{n-1}_s) - (k*(\rho^{n-1}_s-\rho^{n-2}_s))\rho^{n-1}_s )  \cdot \nabla  (\rho^n_s - \rho^{n-1}_s) \Id z \Id s  \\
& - \sum\limits_{\alpha,\beta=1}^d  \int\limits_0^t \int_{\R^d}  [\sigma_s\sigma_s^{\mathrm{T}}]_{(\alpha,\beta)}  \partial_{z_{\beta}}  (\rho^n_s - \rho^{n-1}_s)  \partial_{z_{\alpha}} (\rho^n_s - \rho^{n-1}_s)  \Id z \Id s    \\
&  + \frac{\delta}{2} \int\limits_0^t  \norm{\nabla(\rho^n_s - \rho^{n-1}_s)}_{L^2(\R^d)}^2  \Id s +  C(d,\delta )\int\limits_0^t\norm{\rho^n_s - \rho^{n-1}_s}_{L^2(\R^d)}^2 \Id s \\
\le &\; 4 \delta   \int\limits_0^t \norm{ k*\rho^{n-1}_s}_{L^\infty(\R^d)}^2 \norm{ \rho^{n}_s-\rho^{n-1}_s}_{L^2(\R^d)}^2  \Id s 
+  C(d,\delta) \int\limits_0^t \norm{\rho_s^n- \rho_s^{n-1}}_{L^2(\R^d)}^2  \Id s \\
&+ 4\delta \int\limits_0^t \norm{ k(\rho^{n-1}_s-\rho^{n-2}_s)}_{L^\infty(\R^d)}^2  \norm{\rho^{n-1}_s}_{L^2(\R^d)}^2 \Id s \\
\le &\;   C(d,\delta,\norm{k}_{L^\infty})  \int\limits_0^t \norm{ \rho^{n}_s-\rho^{n-1}_s}_{L^2(\R^d)}^2  \Id s + 4 \delta \norm{k}_{L^2(\R^d)}^2 \int\limits_0^t \norm{ \rho^{n-1}_s-\rho^{n-2}_s}_{L^2(\R^d)}^2  \Id s, 
\end{align*}
 where we used the uniform \(L^2\) bound~\eqref{eq: uniform_L2_bound} and ellipticity condition in the last step. 
 Taking expectation and using Gronwall's lemma, 
 we find 
 \begin{equation*}
 \E \big( \norm{\rho^n_t - \rho^{n-1}_t}_{L^2(\R^d)}^2  \big) 
 \le  4 \delta \norm{k}_{L^2(\R^d)}^2 e^{C(d,\delta,\norm{k}_{L^\infty} )T }  \int\limits_0^t \E( \norm{ \rho^{n-1}_s-\rho^{n-2}_s}_{L^2(\R^d)}^2 )   \Id s .
 \end{equation*}
 The standard Picard--Lindelöf iteration implies that 
 \begin{equation*}
\sum\limits_{n=1}^\infty \sup\limits_{0 \le t \le T}   \E \big( \norm{\rho^n_t - \rho^{n-1}_t}_{L^2(\R^d)}^2 \big)  < \infty
 \end{equation*}
 and therefore we can find a function \(\rho \in  L^2_{\cF^W}([0,T];H^1(\R^{d}))\) such that 
 \begin{equation} \label{eq: picard_iteration_conv}
 \lim\limits_{n\to \infty} \norm{\rho^n-\rho}_{ L^2_{\cF^W}([0,T];H^1(\R^{d}))} = 0.
 \end{equation}
Furthermore, \(\rho\) satisfies mass conservation and therefore by similar arguments as before there exists a solution \(\hat{\rho}\) of the linear SPDE 
\begin{align*}
\Id \hat{\rho}_t
=& \;  \nabla \cdot (( k*\rho_t) \hat{\rho}_t ))  \Id t 
- \nabla \cdot (\nu_t  \hat{\rho}_t \Id W_t)  \\
&\; + \frac{1}{2}  \sum\limits_{\alpha,\beta=1}^d \partial_{z_{\alpha}}\partial_{z_{\beta}} \bigg( ( [\sigma_t\sigma_t^{\mathrm{T}}]_{(\alpha,\beta)} + [\nu_t \nu_t^{\mathrm{T}}]_{(\alpha,\beta)} ) \hat{\rho}_t \bigg)  \Id t,
\end{align*}
which also satisfies the \(L^2\)-bound~\eqref{eq: uniform_L2_bound} . 
Applying the inequality in~\cite[Theorem 5.1]{Krylov1999AnAA} and the \(L^2\)-bound~\eqref{eq: uniform_L2_bound} we find 
\begin{align*}
\norm{\hat{\rho}-\rho^n}_{L^2_{\cF^W}([0,T];H^1(\R^{d}))} 
&\le \norm{(k*(\rho-\rho^n))\hat{\rho}}_{L^2_{\cF^W}([0,T];L^2(\R^{d}))}  \\
&\le C  \norm{\rho-\rho^n}_{L^2_{\cF^W}([0,T];L^2(\R^{d}))} \\
&{\xrightarrow[]{n \to \infty}} 0 . 
\end{align*}
Hence \(\rho = \hat{\rho}\) and we have a solution to the non-linear SPDE~\eqref{eq: d_dim_spde}. The properties of \(\rho\) are a direct consequence of the properties of \(\rho^n\) and the strong convergence in \(L^2_{\cF^W}([0,T];H^1(\R^{d})) \).
\end{proof}
\begin{remark}
We cannot use the inequality provided by~\cite[Theorem 5.1]{Krylov1999AnAA} directly since our SPDE is nonlinear. Therefore, we need to rely on a Gronwall argument and estimate the \(L^2(\R^d)\)-norm directly. We purposely choose this presentation of the existence of the SPDE~\eqref{eq: d_dim_spde} because we will need a stability result in Section~\ref{sec: stability_spde}, which follows the same idea. Alternatively, we could have verified the monotonicity conditions of Wei and Röckner~\cite{LiuWei2015SPDE}.
\end{remark}

\begin{lemma} \label{lemma: aux_chaotic_spde}
Let \(\rho_t\) be a solution of the \(d\)-dimensional SPDE~\eqref{eq: d_dim_spde} provided by Proposition~\ref{prop: existence_d_spde}. 
Then the tensor product \(\rho^{\otimes N}(x) = \prod\limits_{i=1}^N \rho(x_i)\) for \(x=(x_1,\ldots,x_N) \in \R^{dN}\) solves the \(dN\)-dimensional SPDE~\eqref{eq: chaotic_spde} in the sense of Definition~\ref{def: solution_of_chaotic_spde}. 
\end{lemma}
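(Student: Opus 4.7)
The plan is to reduce the claim to an application of It\^o's product rule, exploiting the fact that each factor $\rho_t(x_i)$ satisfies the same SPDE (driven by the same common noise $W$) but in different spatial variables. By linearity and density, it suffices to verify the weak formulation of Definition~\ref{def: solution_of_chaotic_spde} for tensor product test functions $\varphi(x_1,\ldots,x_N)=\prod_{i=1}^{N}\varphi_i(x_i)$ with $\varphi_i\in\testfunctions{\R^d}$, since finite linear combinations of such functions are dense in $\testfunctions{\R^{dN}}$, and both sides of~\eqref{eq: def_chaotic_spde} are continuous with respect to the relevant topology thanks to the a priori regularity $\rho\in L^2_{\cF^W}([0,T];H^1(\R^d))\cap S^\infty_{\cF^W}([0,T];L^1(\R^d))$.

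For such tensor test functions, $\qv{\rho_t^{\otimes N},\varphi}_{L^2(\R^{dN})}=\prod_{i=1}^N\qv{\rho_t,\varphi_i}_{L^2(\R^d)}$. Each scalar process $M^i_t:=\qv{\rho_t,\varphi_i}_{L^2(\R^d)}$ is a continuous semimartingale whose decomposition is given by the definition of the $d$-dimensional SPDE~\eqref{eq: d_dim_spde}, with a drift part collecting the nonlinear transport term and the $\sigma\sigma^T+\nu\nu^T$ diffusion piece, and a martingale part
\[
dM^i_t = (\text{drift})\,dt + \sum_{\alpha=1}^d\sum_{\hat l=1}^{\tilde m}\qv{\nu^{\alpha,\hat l}(t,\cdot)\rho_t,\partial_{z_\alpha}\varphi_i}_{L^2(\R^d)}\,dW_t^{\hat l}.
\]
Then I would apply It\^o's product rule to $\prod_{i=1}^N M^i_t$. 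The first-order terms $\sum_{i}\prod_{j\neq i}M^j_t\,dM^i_t$ assemble exactly into the drift sum $\sum_i$ in~\eqref{eq: def_chaotic_spde} containing the convolution transport term, the $\sigma\sigma^T$ diagonal diffusion, the diagonal ($i=j$) part of the $\nu\nu^T$ term, and the stochastic It\^o integral.

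The essential point is how the remaining off-diagonal $\nu\nu^T$ contribution with $i\neq j$ in~\eqref{eq: def_chaotic_spde} is produced: it arises precisely from the cross quadratic covariations $d\qv{M^i,M^j}_t$ for $i\neq j$, since the $M^i$'s are all driven by the same common Brownian motion $W$. Using It\^o's isometry and the independence of the components of $W$,
\[
d\qv{M^i,M^j}_t = \sum_{\alpha,\beta=1}^d\sum_{\hat l=1}^{\tilde m}\qv{\nu^{\alpha,\hat l}(t,\cdot)\rho_t,\partial_{z_\alpha}\varphi_i}_{L^2(\R^d)}\qv{\nu^{\beta,\hat l}(t,\cdot)\rho_t,\partial_{z_\beta}\varphi_j}_{L^2(\R^d)}\,dt,
\]
and factoring in the tensor structure $\prod_{k\neq i,j}M^k_t$ rewrites this contribution in the multi-variable form $\qv{[\nu(t,x_i)\nu(t,x_j)^{\mathrm T}]_{(\alpha,\beta)}\rho_t^{\otimes N},\partial_{x_{j,\beta}}\partial_{x_{i,\alpha}}\varphi}_{L^2(\R^{dN})}$. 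Adding the factor $\tfrac12$ from the product rule, this produces exactly the missing off-diagonal $\nu\nu^T$ summand.

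The main technical obstacle is not the computation itself but the bookkeeping plus the two soft steps that frame it. First, justifying the density extension from tensor test functions to arbitrary $\varphi\in\testfunctions{\R^{dN}}$: this uses a standard Stone--Weierstrass argument on compact sets combined with the integrability of $\rho_t^{\otimes N}$ ensured by mass conservation and the uniform $L^2$ bound~\eqref{eq: uniform_L2_bound}, which give $\rho^{\otimes N}\in S^\infty_{\cF^W}([0,T];L^1(\R^{dN}))$ and $\rho^{\otimes N}\in L^2_{\cF^W}([0,T];H^1(\R^{dN}))$ via $\norm{\rho_t^{\otimes N}}_{L^2(\R^{dN})}=\norm{\rho_t}_{L^2(\R^d)}^N$ and the analogous computation for $\nabla\rho_t^{\otimes N}$ involving one $\nabla\rho_t$ factor and $N-1$ mass factors. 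Second, applying It\^o's product rule rigorously requires verifying the integrability of the involved processes, which again follows from the a priori estimates of Proposition~\ref{prop: existence_d_spde} together with the boundedness of $\sigma,\nu$ and the compact support of the $\varphi_i$.
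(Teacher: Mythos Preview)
Your proposal is correct and follows essentially the same route as the paper: reduce to tensor test functions, apply It\^o's product rule to $\prod_i \langle \rho_t,\varphi_i\rangle_{L^2(\R^d)}$, identify the cross covariations $d\langle M^i,M^j\rangle_t$ as the source of the off-diagonal $\nu\nu^{\mathrm T}$ terms, and extend by density. The paper's own proof is slightly less detailed on the density and regularity bookkeeping you spell out, but the argument is the same.
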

\begin{proof}
By the structure of the solution we immediately obtain the regularity necessary in Definition~\ref{def: solution_of_chaotic_spde}. 
Let \(\varphi_{i} \in C_c^\infty(\R^d)\) for \(i=1,\ldots,N)\) be smooth functions. Then for \(\varphi(x)= \prod\limits_{i=1}^N \varphi(x_i)\) we obtain
\begin{equation*}
\langle  \rho_t^{\otimes N},  \varphi \rangle_{L^2(\R^{dN})}
=   \prod\limits_{i=1}^N  \langle\rho_t ,\varphi_i \rangle_{L^2(\R^{d})}. 
\end{equation*}
The right hand side is now a product of It\^{o} processes. Hence applying the It\^{o} formula to the function \(f(y) = y_1 y_2 \cdots y_N\) for \(y \in \R^N\) we obtain 
\begin{align*}
&\; \Id \bigg(\prod\limits_{i=1}^N  \langle\rho_t ,\varphi_i \rangle_{L^2(\R^{d})} \bigg)  \\
=&\;  \sum\limits_{i=1}^N  \prod\limits_{j\neq i}^N  \langle\rho_t ,\varphi_j \rangle_{L^2(\R^{d})}  \Id \langle\rho_t ,\varphi_i \rangle_{L^2(\R^{d})} 
+ \frac{1}{2}\sum\limits_{\substack{i,j=1 \\ i \neq j}}^N \prod\limits_{ \substack{q \neq i \\ q \neq j}}^N  \langle\rho_t ,\varphi_q \rangle_{L^2(\R^{d})}  \Id \langle  \langle\rho_t ,\varphi_i \rangle_{L^2(\R^{d})},   \langle\rho_t ,\varphi_j \rangle_{L^2(\R^{d})}  \rangle . 
\end{align*}
Computing the quadratic variation we find 
\begin{align*}
&\; \langle  \langle\rho_t ,\varphi_i \rangle_{L^2(\R^{d})},   \langle\rho_t ,\varphi_j \rangle_{L^2(\R^{d})}  \rangle \\
=&\; \bigg\langle \sum\limits_{\alpha=1}^d \sum\limits_{l=1}^{\tilde{m}} \int\limits_0^t \bigg\langle    \nu^{\alpha,l}(s,x_i) \rho_s ,  \partial_{x_{i,\alpha}} \varphi_i \bigg\rangle_{L^2(\R^{d})}  W_s^{l} , \sum\limits_{\beta=1}^d \sum\limits_{\hat{l}=1}^{\tilde{m}} \int\limits_0^t \bigg\langle    \nu^{\beta,\hat{l}}(s,x_j) \rho_s ,  \partial_{x_{j,\beta}} \varphi_j \bigg\rangle _{L^2(\R^{d})} W_s^{\hat{l}}
\bigg\rangle \\
=&\; \sum\limits_{\alpha,\beta=1}^d \sum\limits_{\hat{l}=1}^{\tilde{m}} \int\limits_0^t \bigg\langle    \nu^{\alpha,\hat{l}}(s,x_i) \rho_s ,  \partial_{x_{i,\alpha}} \varphi_i \bigg\rangle_{L^2(\R^{d})} \bigg\langle    \nu^{\beta,\hat{l}}(s,x_j) \rho_s ,  \partial_{x_{j,\beta}} \varphi_j \bigg\rangle_{L^2(\R^{d})} \Id s , 
\end{align*}
which implies the following form for the covariation term 
\begin{equation*}
\frac{1}{2}\sum\limits_{\substack{i,j=1 \\ i \neq j}}^N  \sum\limits_{\alpha,\beta=1}^d \sum\limits_{\hat{l}=1}^{\tilde{m}}  \int\limits_0^t    \langle  \nu^{\alpha,\hat{l}}(s,x_i)  \nu^{\beta,\hat{l}}(s,x_j) \bigg( \prod\limits_{q=1}^N\rho_s\bigg)(\cdot),  \partial_{x_{i,\alpha}} \partial_{x_{j,\beta}} \varphi \rangle_{L^2(\R^d)} \Id s . 
\end{equation*}
Plugging in the dynamic \(\langle \rho_t, \varphi \rangle\), we find 
\begin{align*}
&\;  \sum\limits_{i=1}^N  \prod\limits_{j\neq i}^N  \langle\rho_t ,\varphi_j \rangle_{L^2(\R^{d})}  \Id \langle\rho_t ,\varphi_i \rangle_{L^2(\R^{d})} 
\\
= &\sum\limits_{i=1}^N \int\limits_0^t   \prod\limits_{j\neq i}^N  \langle\rho_s ,\varphi_j \rangle_{L^2(\R^{d})} \bigg(  \bigg \langle (k*\rho_s)(x_i) \rho_s, \nabla \varphi_i \bigg \rangle_{L^2(\R^{d})}  \\
   &    + \frac{1}{2} \sum\limits_{\alpha,\beta=1}^d  \bigg\langle   [\sigma(s,\cdot)\sigma(s,\cdot)^{\mathrm{T}}]_{(\alpha,\beta)} \rho_s, \partial_{x_{i,\beta}}  \partial_{x_{i,\alpha}} \varphi_i \bigg\rangle_{L^2(\R^{d})}  \\
    &+  \frac{1}{2} \sum\limits_{\alpha,\beta=1}^d  \bigg\langle   [\nu(s,x_i)\nu(s,x_i)^{\mathrm{T}}]_{(\alpha,\beta)} \rho_s, \partial_{x_{i,\beta}}  \partial_{x_{i,\alpha}} \varphi_i \bigg\rangle_{L^2(\R^{d})} \bigg) \Id s \\
&+ \sum\limits_{i=1}^N  \sum\limits_{\alpha=1}^d \sum\limits_{\hat{l}=1}^{\tilde{m}} \int\limits_{0}^t \prod\limits_{j\neq i}^N   \langle\rho_s ,\varphi_j \rangle_{L^2(\R^{d})}   \bigg\langle    \nu^{\alpha,\hat{l}}(s,x_i) \rho_s^{\otimes{N}} ,  \partial_{x_{i,\alpha}} \varphi_i \bigg\rangle_{L^2(\R^d)} W_s^{\hat{l}}. 
\end{align*}
Putting the product into one integral over \(\R^{dN}\) as before we see that \(\prod\limits_{i=1}^N \rho_t(x_i)\) satisfies equation~\eqref{eq: def_chaotic_spde} for \(\varphi \in C_c^\infty(\R^{dN})\) of the form \(\varphi(x)= \prod\limits_{i=1}^N \varphi(x_i)\). Becasuse functions of this form are dense in \(L^2(\R^{dN})\) the claim follows by the regularity of \(\rho\). 
\end{proof}

\begin{remark}
    At the end of the section let us make some comments on the regularity of the interaction kernel \(k\). First, we are concerned with the global existence of solutions, meaning for arbitrary \(T >0 \). Otherwise, the conditional propagation of chaos results seems not powerful, since we assume our initial data is independent and, therefore, the independence should at least propagate for some small \(T>0\).  Hence, to demonstrate global existence, the map \(u \mapsto (k*u) \rho \) needs to be a continuous linear map. 
    One way to enforce this is for \(k*u\) to be uniformly bounded in the probability space and in the time variable, as discussed in~\cite{HuangQui2021,nikolaev2023hk}.
    Additionally, It\^{o}'s formula for the \(L^p\)-norms requires the condition \(p \ge 2\). Consequently, lacking concrete properties of \(k\), we require \(k\) to belong to \(L^{p'}(\R^d)\), where \(p'\) is the conjugated exponent.
    
    However, in order to apply the exponential law of large numbers~\cite[Theorem 3]{JabinWang2018} in Section~\ref{sec: relative_entropy_smooth_setting}, we require a bounded force \(k\). Therefore, the condition \(k \in L^2(\R^d)\cap L^\infty(\R^d)\) appears to be necessary and minimal. 
\end{remark}

\section{Relative entropy in the smooth case}
\label{sec: relative_entropy_smooth_setting}
In this section we assume that all coefficients \(\sigma, \nu, k\) are smooth uniform in time. More precisely, for all indices \(\alpha, \beta, \tilde{l} \) we assume 
\begin{equation} \label{eq: smooth_case_uniform-estimate}
\sup\limits_{0\le t \le T} \norm{[\sigma(t,\cdot) \sigma^{\mathrm{T}}(t,\cdot)]_{(\alpha,\beta)}}_{C^\infty(\R^d)} +
\sup\limits_{0\le t \le T}\norm{\nu^{\beta, \tilde{l}} (t, \cdot)}_{C^\infty(\R^d)} +
\norm{k}_{C^\infty(\R^d)} \le  C 
\end{equation}
for some constant \(C>0\) and Assumption~\ref{ass: diffusion_coef} holds.  
The idea is to demonstrate the relative entropy estimate in cases where \(\rho^N,\rho^{\otimes N}\) solve the SPDE's~\ref{eq: liouville_SPDE},~\ref{eq: chaotic_spde} , not in the weak sense against a test function, but rater pointwise. The key to achieving this lies in the Sobolev embedding, which mandates at least two derivatives to properly interpret the pointwise second derivatives in the SPDE's~\ref{eq: liouville_SPDE},~\ref{eq: chaotic_spde}. 

\begin{proposition}  \label{prop: solution_ito_proces_rep}
For each \(N\in \N\)
the solutions  \(\rho_t^N\) and \(\rho_t^{\otimes N}\) are smooth and there exists versions, which have an Ito process representation. 

Hence, there exists a set \(\tilde{\Omega}\) with \(\P(\tilde{\Omega})  = 1 \) such that for all \((t,x) \in [0,T] \times \R^d\) we have 
\begin{align*}
&\; \rho^N(\omega, t, x) \\
=&\; \rho_0(x) 
+ \sum\limits_{i=1}^N \int\limits_0^t   \nabla_{x_i} \cdot \bigg( \frac{1}{N} \sum\limits_{j=1}^N k(x_i-x_j) \rho^N(\omega,s,x) \bigg) \Id s  \\
&- \sum\limits_{i=1}^N  \sum\limits_{\alpha=1}^d \sum\limits_{\hat{l}=1}^{\tilde{m}} \int\limits_0^t   \nu^{\alpha,\hat{l}}(s,x_i) \partial_{x_{i,\alpha}} \rho^{ N}(\omega,s,x)   W_s^{\hat{l}}   \\
&+ \frac{1}{2} \sum\limits_{i=1}^N \sum\limits_{\alpha,\beta=1}^d  \int\limits_0^t \partial_{x_{i,\alpha}}\partial_{x_{i,\beta}} \bigg( [\sigma(s,x_i)\sigma(s,x_i)^{\mathrm{T}}]_{(\alpha,\beta)}   \rho^N(\omega,s,x) \bigg)  \Id s\\ 
&+   \frac{1}{2} \sum\limits_{i,j=1}^N \sum\limits_{\alpha,\beta=1}^d  \int\limits_0^t \partial_{x_{i,\alpha}}\partial_{x_{j,\beta}} \bigg(   + [\nu(s,x_i)\nu(s,x_j)^{\mathrm{T}}]_{(\alpha,\beta)} \rho^N(\omega,s,x) \bigg)  \Id s
\end{align*}
and 
\begin{align*}
&\; \rho^{\otimes N}(\omega, t, x) \\
=&\; \rho_0(x) 
+ \sum\limits_{i=1}^N \int\limits_0^t   \nabla_{x_i} \cdot \bigg( (k*\rho)(\omega,t,x) \rho^{\otimes N}(\omega,s,x) \bigg) \Id s  \\
&- \sum\limits_{i=1}^N  \sum\limits_{\alpha=1}^d \sum\limits_{\hat{l}=1}^{\tilde{m}} \int\limits_0^t   \nu^{\alpha,\hat{l}}(s,x_i) \partial_{x_{i,\alpha}} \rho^{\otimes N}(\omega,s,x)   W_s^{\hat{l}}   \\
&+  \frac{1}{2} \sum\limits_{i,j=1}^N \sum\limits_{\alpha,\beta=1}^d  \int\limits_0^t \partial_{x_{i,\alpha}}\partial_{x_{i,\beta}} \bigg(  [\sigma(s,x_i)\sigma(s,x_i)^{\mathrm{T}}]_{(\alpha,\beta)} \rho^{\otimes N}(\omega,s,x) \bigg)  \Id s\\
&+   \frac{1}{2} \sum\limits_{i,j=1}^N \sum\limits_{\alpha,\beta=1}^d  \int\limits_0^t \partial_{x_{i,\alpha}}\partial_{x_{j,\beta}} \bigg(  [\nu(s,x_i)\nu(s,x_j)^{\mathrm{T}}]_{(\alpha,\beta)} \rho^{\otimes N}(\omega,s,x) \bigg)  \Id s . 
\end{align*}
\end{proposition}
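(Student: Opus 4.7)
My plan is to promote the weak $L^2_{\cF^W}([0,T]; H^1)$ solutions from Section~\ref{sec: existence_ass_spde} to smooth It\^o random fields by a two-step procedure: a parabolic-type bootstrap of spatial regularity, followed by a Kolmogorov-continuity argument that collapses the exceptional null sets into a single $\tilde{\Omega}$.

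\textbf{Bootstrap of regularity.} Under the smooth-coefficient assumption~\eqref{eq: smooth_case_uniform-estimate}, formally differentiating SPDE~\eqref{eq: liouville_SPDE} in the spatial variables produces an SPDE for $\partial^\alpha \rho^N$ of the same divergence-form structure with the same ellipticity constant from Assumption~\ref{ass: diffusion_coef}\eqref{item: ellipticity_idiosy} and with forcing terms that are linear combinations of lower-order derivatives of $\rho^N$ multiplied by smooth, uniformly bounded coefficients. Applying Krylov's $L^2$-theory~\cite[Theorem 5.1]{Krylov1999AnAA} inductively then yields
\[
\rho^N \in L^2_{\cF^W}\bigl([0,T]; H^n(\R^{dN})\bigr), \qquad n \in \N,
\]
provided the initial datum is taken smooth enough in the smooth-coefficient regime of this section (which is harmless since the stability step of Section~\ref{sec: stability_spde} will remove this assumption). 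For $\rho^{\otimes N}$ the same bootstrap applies to the one-dimensional equation~\eqref{eq: d_dim_spde}, using that thanks to $k \in L^\infty(\R^d)$ and mass conservation the drift $k*\rho$ is a smooth, uniformly bounded random coefficient; the tensor-product identification in Lemma~\ref{lemma: aux_chaotic_spde} then transfers the regularity to $\rho^{\otimes N}$.

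\textbf{From weak to pointwise.} Choose $n > dN/2 + 2$ so that $H^n(\R^{dN}) \hookrightarrow C^2(\R^{dN})$. Integrating by parts in the weak formulations of Definitions~\ref{def: solution_of_liouville_equation} and~\ref{def: solution_of_chaotic_spde} transfers every spatial derivative onto the density, so each equation takes the schematic form $\qv{\rho_t, \varphi}_{L^2} = \qv{\rho_0, \varphi}_{L^2} + \int_0^t \qv{F_s, \varphi}_{L^2} \Id s + \int_0^t \qv{G_s, \varphi}_{L^2} \Id W_s$ for continuous random fields $F, G$ expressed in terms of $\rho^N$, $\rho^{\otimes N}$ and their first- and second-order derivatives. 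Testing against mollifiers $\varphi_\epsilon(\cdot - x)$, letting $\epsilon \downarrow 0$, and invoking the stochastic Fubini theorem yields, for each fixed $(t,x)$ and outside a $\P$-null set, the pointwise It\^o identity claimed in the proposition.

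\textbf{Uniform null set.} The main obstacle is that the exceptional set produced by the previous step depends on $(t,x)$. To resolve this, I would apply the Kolmogorov continuity criterion to both sides of the claimed identity, viewed as random fields on $[0,T] \times \R^{dN}$. The necessary moment bounds on space-time increments of the Lebesgue integral follow from the higher-order Sobolev estimates and the Sobolev embedding, while those for the stochastic integral follow from Burkholder--Davis--Gundy combined with the $C^1$-bounds on $\nu$ in Assumption~\ref{ass: diffusion_coef}\eqref{item: regularity_common_noise}. Both sides thus admit jointly continuous modifications in $(t,x)$; since they coincide on a countable dense subset $\P$-almost surely, they coincide on all of $[0,T] \times \R^{dN}$ on a single event $\tilde{\Omega}$ of full probability, delivering the claimed It\^o process representations.
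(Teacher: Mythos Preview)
Your proposal is correct and follows essentially the same route as the paper. The paper obtains the higher regularity by directly invoking \cite[Theorem~5.1 and Remark~5.6]{Krylov1999AnAA} for the SPDE written in non-divergence form (rather than bootstrapping by differentiation, which is equivalent), treats the nonlinear equation by the same linearisation trick you use, transfers regularity to $\rho^{\otimes N}$ via Lemma~\ref{lemma: aux_chaotic_spde}, and then cites \cite[Theorem~4.3(e)]{RozovskyBorisL2018SES} as a black box for the pointwise It\^o representation; your mollifier--stochastic Fubini--Kolmogorov argument is exactly the hands-on version of what that reference packages.
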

\begin{proof}
Let us fix \(N \in \N\) and choose a \(n \in \N\) such that \(n> 2+\frac{dN}{2}\). 
We start with the linear \(N\)-particle SPDE. 
Similar to the non-smooth case, we can obtain a solution
\begin{equation*}
\rho^N \in  L^2_{\cF^W}([0,T];H^1(\R^{d})), 
\end{equation*}
which satisfies mass conservation. 
Since all coefficients are smooth we can write the SPDE's in non-divergence form. Therefore, we can apply~\cite[Theorem 5.1 and Remark 5.6]{Krylov1999AnAA} in the case \(p=2\) and \(n\) as above to obtain  
\begin{equation*}
\rho^N \in L^2_{\cF^W}([0,T];H^n(\R^{d})). 
\end{equation*}
The It\^{o} representation, then immideatly follows from the regularity and~\cite[Theorem 4.3(e)]{RozovskyBorisL2018SES}. 
For the non-linear SPDE \(\rho^{\otimes N}\) we repeat the arguments with one slight modification. After obtaining the solution by the same steps as in Section~\ref{sec: existence_ass_spde},
we linearize the equation by fixing \(\rho\) in the convolution \(k*\rho\). Now, the coefficients are again smooth and for arbitrary multiindex \(\gamma\) we have 
\begin{equation*}
\norm{\partial^{\gamma}(k*\rho_t)}_{L^\infty}
\le \norm{k}_{C^{|\gamma|}(\R^d)}
\end{equation*}
by Young's inequality and mass conservation of \(\rho\). This provides a smooth solution \(\rho\) and consequently, by the same argument as in Lemma~\ref{lemma: aux_chaotic_spde}, we obtain a smooth solution \(\rho^{\otimes N}\), which again has the It\^{o} representation by ~\cite[Theorem 4.3(e)]{RozovskyBorisL2018SES}. 

\end{proof}
For each \(x \in \R^{dN}\) we obtain two process \(\rho_t^N(x)\) and \(\rho_t^{\otimes N}(x)\). Consequently, we can utilize It\^{o}'s formula to derive an evolution of the relative entropy \(\mathcal{H}(\rho_t^N \vert \rho^{\otimes N }_t )\). 
To begin, we split the integrand \(x \log(x/y)\) of the relative entropy into two terms \(x\log(x)\) and \(-x\log(y)\). Subsequently, we apply It\^{o}'s formula to each function separately and combine them later.

\begin{lemma}[It\^{o}'s formula for \(x \log(x) \)]
\label{lem: ito_aux_1} 
Let \(\rho_t^N\) be the smooth solution provided by Proposition~\ref{prop: solution_ito_proces_rep}. Then, 
\begin{align} \label{eq: ito_aux_1}
&\; \Id \rho_t^N(x)  \log(\rho_t^{N})(x)  \nonumber\\
=&\; (\log(\rho_t^{N}(x))  +1) \bigg( \sum\limits_{i=1}^N   \nabla_{x_i} \cdot \bigg( \frac{1}{N} \sum\limits_{j=1}^N k(x_i-x_j) \rho_t^N(x) \bigg) \Id t  \nonumber \\
& - \sum\limits_{i=1}^N  \sum\limits_{\alpha=1}^d \sum\limits_{\hat{l}=1}^{\tilde{m}}  \nu_t^{\alpha,\hat{l}}(x_{i}) \partial_{x_{i,\alpha}} \rho_t^{N}(x)   \Id W_t^{\hat{l}}    +   \frac{1}{2} \sum\limits_{i=1}^N \sum\limits_{\alpha,\beta=1}^d  \partial_{x_{i,\alpha}}\partial_{x_{i,\beta}}  ( [\sigma_t(x_i)\sigma_t(x_i)^{\mathrm{T}}]_{(\alpha,\beta)}\rho_t^N ) \Id t  \nonumber  \\
&+   \frac{1}{2} \sum\limits_{i,j=1}^N \sum\limits_{\alpha,\beta=1}^d  \partial_{x_{i,\alpha}}\partial_{x_{j,\beta}} ( [\nu_t(x_i)\nu_t(x_j)^{\mathrm{T}}]_{(\alpha,\beta)} \rho_t^N) \Id t  \bigg) \nonumber \\
&+ \frac{\rho_t^N}{2}  \sum\limits_{\hat{l}=1}^{\tilde{m}}  \bigg( \sum\limits_{i=1}^N  \sum\limits_{\alpha=1}^d  \nu_t^{\alpha,\hat{l}}(x_i) \partial_{x_{i,\alpha}} \log(\rho_t^{ N}(x))  \bigg)^2  \Id  t. 
\end{align}
In particular, we can choose a set \(\tilde{\Omega} \subseteq \Omega\) with \(\P(\tilde{\Omega})\) such that equality~\eqref{eq: ito_aux_1} holds for all \((t,x)\) and the integrands on the right hand side of equality~\eqref{eq: ito_aux_1} are product measurable in \((\omega,s,x)\).
\end{lemma}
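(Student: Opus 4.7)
The plan is to apply It\^{o}'s formula to the real-valued semimartingale $t \mapsto \rho_t^N(x)$ at each fixed $x$ with the function $f(u) = u \log u$, and then upgrade the resulting pointwise almost-sure identity to one that holds simultaneously for all $(t,x)$ on a single full-measure set, using the $x$-regularity provided by Proposition~\ref{prop: solution_ito_proces_rep}.

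First, fix $x \in \R^{dN}$. By Proposition~\ref{prop: solution_ito_proces_rep}, $\rho_t^N(x)$ is an It\^{o} process whose drift is the sum of the deterministic integrals in the representation and whose martingale part is the sum of stochastic integrals against the common Brownian motions $W^{\hat{l}}$. Since $f'(u) = \log u + 1$ and $f''(u) = 1/u$, a direct application of It\^{o}'s formula gives that the drift contribution to $d(\rho_t^N \log \rho_t^N)$ equals $(\log \rho_t^N(x) + 1)$ times the drift of $\rho_t^N(x)$, while the martingale term picks up the same prefactor. It remains to compute the quadratic variation correction $\tfrac{1}{2}f''(\rho_t^N(x))\,d\langle \rho^N(x)\rangle_t$. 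From the explicit martingale part one reads off
\begin{equation*}
d\langle \rho^N(x)\rangle_t \;=\; \sum_{\hat{l}=1}^{\tilde m} \Bigl( \sum_{i=1}^N \sum_{\alpha=1}^d \nu_t^{\alpha,\hat l}(x_i)\, \partial_{x_{i,\alpha}} \rho_t^N(x) \Bigr)^{\!2} dt,
\end{equation*}
and dividing by $\rho_t^N(x)$, together with the algebraic identity $(\partial \rho)^2/\rho = \rho (\partial \log \rho)^2$ applied inside each square, yields exactly the last line of~\eqref{eq: ito_aux_1}.

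Next I address the uniformity in $x$, which is the only subtle point. The naive application above produces an exceptional null set $N_x$ depending on $x$. To produce a single $\tilde\Omega$ that serves all $(t,x)$, I would (i) pick a countable dense subset $D \subseteq \R^{dN}$ and intersect with the complements of $N_x$ for $x \in D$, together with the full-measure set from Proposition~\ref{prop: solution_ito_proces_rep}; (ii) observe that by the smoothness established there, $\rho^N$ together with all derivatives appearing in~\eqref{eq: ito_aux_1}, as well as the coefficients, are jointly continuous in $(t,x)$, while the stochastic integrand $\nu_t^{\alpha,\hat l}(x_i)\,\partial_{x_{i,\alpha}}\rho_t^N(x)$ has smooth, locally bounded $x$-derivatives; and (iii) invoke Kolmogorov's criterion for parameter-dependent stochastic integrals to obtain a jointly continuous modification of the martingale term. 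Continuous extension from $D$ to all of $\R^{dN}$ then transfers the identity to every $(t,x)$ on the same full-measure set, and the joint continuity together with Fubini delivers the product measurability in $(\omega,s,x)$.

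The main obstacle I anticipate is step (iii): one must control the $x$-regularity of the stochastic integral uniformly enough to justify a jointly continuous modification and the pointwise extension. This is where the smoothness established in Proposition~\ref{prop: solution_ito_proces_rep} is essential, since it allows the Kolmogorov estimate to be carried out on compacts via moment bounds on spatial differences of the integrand. Once the continuous modification is in hand, the remainder of the argument is routine It\^{o}-calculus bookkeeping.
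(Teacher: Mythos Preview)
Your proposal is correct and follows essentially the same route as the paper: apply It\^o's formula to $f(u)=u\log u$ at each fixed $x$, compute the quadratic variation of the martingale part, rewrite via $(\partial\rho)^2/\rho=\rho(\partial\log\rho)^2$, and then invoke Kolmogorov's continuity criterion in $(t,x)$ to obtain a jointly continuous version of the stochastic integral and hence product measurability. The paper is terser on the uniformity step, simply appealing to Kolmogorov, whereas you spell out the countable-dense-set argument, but the underlying idea is identical.
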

\begin{proof}
Let us fix \(x\in \R^{dN}\). Then for almost all \(\omega\) and all \(t \ge 0\) we have 
\begin{equation*}
\Id \rho_t^N(x)  \log(\rho_t^{N})(x)   
= (\log(\rho_t^{N})(x)   +1)\Id \rho_t^N + \frac{1}{2 \rho_t^N(x) }  \Id \langle \rho_t^N(x)  \rangle .
\end{equation*}
For the quadratic variation we obtain 
\begin{align*}
\Id \langle \rho_t^{N}(x) \rangle 
&=  \Id \bigg \langle \sum\limits_{i=1}^N  \sum\limits_{\alpha=1}^d \sum\limits_{\hat{l}=1}^{\tilde{m}}   \nu^{\alpha,\hat{l}}(t,x_i) \partial_{x_{i,\alpha}} \rho_t^{ N} (x)  W_t^{\hat{l}} \bigg\rangle \\
&=   \sum\limits_{i,j=1}^N  \sum\limits_{\alpha,\beta=1}^d \sum\limits_{\hat{l}=1}^{\tilde{m}}  (\nu^{\alpha,\hat{l}}(t,x_i) \partial_{x_{i,\alpha}} \rho_t^{ N}(x)  ) (\nu^{\alpha,\hat{l}}(t,x_j) \partial_{x_{j,\beta}} \rho_t^{ N}(x)  )  \Id  t. 
\end{align*}
Consequently, combining the quadratic variation with the dynamic given in Proposition~\ref{prop: solution_ito_proces_rep} we obtain
\begin{align*}
&\; \Id \rho_t^N(x)  \log(\rho_t^{N})(x)  \\
=&\; (\log(\rho_t^{N}(x))  +1) \bigg( \sum\limits_{i=1}^N   \nabla_{x_i} \cdot \bigg( \frac{1}{N} \sum\limits_{j=1}^N k(x_i-x_j) \rho_t^N(x) \bigg) \Id t  \\
&- \sum\limits_{i=1}^N  \sum\limits_{\alpha=1}^d \sum\limits_{\hat{l}=1}^{\tilde{m}}  \nu_t^{\alpha,\hat{l}}(x_{i}) \partial_{x_{i,\alpha}} \rho_t^{N}(x)   \Id W_t^{\hat{l}}   \\
&+   \frac{1}{2} \sum\limits_{i=1}^N \sum\limits_{\alpha,\beta=1}^d  \partial_{x_{i,\alpha}}\partial_{x_{i,\beta}} ( [\sigma_t(x_i)\sigma_t(x_i)^{\mathrm{T}}]_{(\alpha,\beta)}  \rho_t^N(x) )  \Id t \bigg) \\
 &+ \frac{1}{2} \sum\limits_{i,j=1}^N \sum\limits_{\alpha,\beta=1}^d  \partial_{x_{i,\alpha}}\partial_{x_{j,\beta}} ([\nu_t(x_i)\nu_t(x_j)^{\mathrm{T}}]_{(\alpha,\beta)}  \rho_t^N(x) )  \Id t \bigg) \\
&+ \frac{1}{2\rho_t^N(x)} \sum\limits_{i,j=1}^N  \sum\limits_{\alpha,\beta=1}^d \sum\limits_{\hat{l}=1}^{\tilde{m}}  (\nu_t^{\alpha,\hat{l}}(x_i) \partial_{x_{i,\alpha}} \rho_t^{ N}(x) ) (\nu_t^{\alpha,\hat{l}}(x_j) \partial_{x_{j,\beta}} \rho_t^{ N}(x) )  \Id  t. 
\end{align*}
Now, we can rewrite the last term into 
\begin{align*}
&\; \frac{1}{2\rho_t^N(x)}  \sum\limits_{\hat{l}=1}^{\tilde{m}}  \bigg( \sum\limits_{i=1}^N  \sum\limits_{\alpha=1}^d  (\nu^{\alpha,\hat{l}}(t,x_i) \partial_{x_{i,\alpha}} \rho_t^{ N}(x) ) \bigg)^2  \Id  t \\
=&\;  \frac{\rho_t^N(x)}{2}  \sum\limits_{\hat{l}=1}^{\tilde{m}}  \bigg( \sum\limits_{i=1}^N  \sum\limits_{\alpha=1}^d  \nu^{\alpha,\hat{l}}(t,x_i) \partial_{x_{i,\alpha}} \log(\rho_t^{ N}(x))  \bigg)^2  \Id  t.
\end{align*}
Inserting it into the previous equation we obtain equation \ref{eq: ito_aux_1}. For the measurability statement we just notice that the integrands are continuous in time and space \(R^d\). Therefore we can apply Kolmogorov's continuity criteria in \((t,x)\) to obtain a continuous version of the stochastic integral, which implies the continuity of the right hand side of equation~\ref{eq: ito_aux_1} and consequently the product measurability. 
\end{proof}

Similar, we obtain an expression for \(\rho_t^N (x) \log(\rho_t^{\otimes N}(x))\). 

\begin{lemma}[It\^{o}'s formula for \(x \log(y)\)]
\label{lem: ito_aux_2}
Let \(\rho_t^{\otimes N}\) be the smooth solution provided by Proposition~\ref{prop: solution_ito_proces_rep}. Then, 

\begin{align} \label{eq: ito_aux_2}
&\; \Id \rho_t^N (x) \log(\rho_t^{\otimes N}(x)) \nonumber \\
= &\; \log(\rho_t^{\otimes N}(x))  \bigg( \sum\limits_{i=1}^N   \nabla_{x_i} \cdot \bigg( \frac{1}{N} \sum\limits_{j=1}^N k(x_i-x_j) \rho_t^N(x) \bigg) \Id t \nonumber \\
&  - \sum\limits_{i=1}^N  \sum\limits_{\alpha=1}^d \sum\limits_{\hat{l}=1}^{\tilde{m}}  \nu_t^{\alpha,\hat{l}}(x_i) \partial_{x_{i,\alpha}} \rho_t^{N}(x)   \Id W_s^{\hat{l}}   +   \frac{1}{2} \sum\limits_{i=1}^N \sum\limits_{\alpha,\beta=1}^d  \partial_{x_{i,\alpha}}\partial_{x_{i,\beta}} ( [\sigma_t(x_i)\sigma_t(x_i)^{\mathrm{T}}]_{(\alpha,\beta)}  \rho_t^N(x)  \Id t   \nonumber \\
&+   \frac{1}{2} \sum\limits_{i,j=1}^N \sum\limits_{\alpha,\beta=1}^d  \partial_{x_{i,\alpha}}\partial_{x_{j,\beta}} ( [\nu_t(x_i)\nu_t(x_j)^{\mathrm{T}}]_{(\alpha,\beta)} ) \rho_t^N(x) )   \Id t \bigg)  \nonumber \\
&+ \frac{\rho_t^N(x)}{\rho_t^{\otimes N}(x)} 
\bigg( \sum\limits_{i=1}^N   \nabla_{x_i} \cdot \bigg( (k*\rho_t)(x_i) \rho_t^{\otimes N}(x) \bigg) \Id t   - \sum\limits_{i=1}^N  \sum\limits_{\alpha=1}^d \sum\limits_{\hat{l}=1}^{\tilde{m}}  \nu_t^{\alpha,\hat{l}}(x_i) \partial_{x_{i,\alpha}} \rho_t^{\otimes N}(x)   \Id W_s^{\hat{l}}   \nonumber \\
&+   \frac{1}{2} \sum\limits_{i=1}^N \sum\limits_{\alpha,\beta=1}^d  \partial_{x_{i,\alpha}}\partial_{x_{i,\beta}} ( [\sigma_t(x_i)\sigma_t(x_i)^{\mathrm{T}}]_{(\alpha,\beta)} \rho_t^{\otimes N}(x) )  \Id t \nonumber \\
&+  \frac{1}{2} \sum\limits_{i,j=1}^N \sum\limits_{\alpha,\beta=1}^d  \partial_{x_{i,\alpha}}\partial_{x_{j,\beta}} ( [\nu_t(x_i)\nu_t(x_j)^{\mathrm{T}}]_{(\alpha,\beta)} ) \rho_t^{\otimes N}(x) )  \Id t \bigg)  \nonumber \\
&+ \rho_t^{ N}(x)
 \sum\limits_{i,j=1}^N  \sum\limits_{\alpha,\beta=1}^d \sum\limits_{\hat{l}=1}^{\tilde{m}}  (\nu_t^{\alpha,\hat{l}}(x_i) \partial_{x_{i,\alpha}} \log(\rho_t^{ N})  (x)) (\nu_t^{\alpha,\hat{l}}(x_j) \partial_{x_{j,\beta}} \log( \rho_t^{\otimes N})  (x))  \Id  t \nonumber \\
&-  \frac{\rho_t^{N}(x)}{2}
 \sum\limits_{\hat{l}=1}^{\tilde{m}}  \bigg(\sum\limits_{i=1}^N  \sum\limits_{\alpha=1}^d  \nu_t^{\alpha,\hat{l}}(x_i) \partial_{x_{i,\alpha}} \log(\rho_t^{\otimes N}(x) )  \bigg)^2 \Id  t. 
\end{align}
In particular, we can choose a set \(\tilde{\Omega} \subseteq \Omega\) with \(\P(\tilde{\Omega})\) such that equality~\eqref{eq: ito_aux_1} holds for all \((t,x)\) and the integrands on the right hand side of equality~\eqref{eq: ito_aux_1} are product measurable in \((\omega,s,x)\).
\end{lemma}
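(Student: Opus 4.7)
The proof will proceed by applying It\^o's formula pointwise in $x \in \R^{dN}$ to the $C^{2}$-function $\varphi(u,v) = u\log v$ on $(0,\infty) \times (0,\infty)$, with $u = \rho_t^N(x)$ and $v = \rho_t^{\otimes N}(x)$, using the It\^o-process representations supplied by Proposition~\ref{prop: solution_ito_proces_rep}. Before invoking It\^o, I would first justify strict positivity of both densities: smoothness of all coefficients together with the ellipticity assumption~\ref{ass: diffusion_coef}~\eqref{item: ellipticity_idiosy} lets us appeal to a parabolic strong maximum principle (or equivalently, write the densities as expectations against the associated SDEs with non-degenerate noise), so that $\rho_t^N(x), \rho_t^{\otimes N}(x) > 0$ almost surely for each $(t,x)$. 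This avoids the singularity of $\varphi$ at $v = 0$ and is the step I expect to require the most care; a clean alternative is to work with a smooth regularisation $\varphi_\epsilon(u,v) = u \log(v+\epsilon)$, pass to the limit after all bounds are in place, and apply a localisation argument in $v$.

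Granted positivity, the It\^o expansion reads
\begin{align*}
\d(u \log v)
= \log(v)\,\d u + \frac{u}{v}\,\d v + \frac{1}{v}\,\d\qv{u,v} - \frac{u}{2 v^{2}}\,\d\qv{v}.
\end{align*}
Substituting the two dynamics from Proposition~\ref{prop: solution_ito_proces_rep} produces the first two bracketed lines in~\eqref{eq: ito_aux_2}. The remaining two lines come from the bracket terms. Since both $\rho_t^N(x)$ and $\rho_t^{\otimes N}(x)$ are driven by the same common Brownian motion $W$ (the idiosyncratic noise $B^i$ has been averaged out at the level of densities), the covariation and quadratic-variation computations reduce to inner products in $\ell^{2}(\{1,\ldots,\tilde m\})$ of the integrands of the stochastic integrals appearing in the representations of $\rho_t^N(x)$ and $\rho_t^{\otimes N}(x)$.

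Concretely I would compute
\begin{align*}
\d\qv{\rho^{\otimes N}(x)}_t
&= \sum_{\hat l = 1}^{\tilde m}\Bigl(\sum_{i=1}^N \sum_{\alpha=1}^d \nu_t^{\alpha,\hat l}(x_i)\,\partial_{x_{i,\alpha}} \rho_t^{\otimes N}(x)\Bigr)^{\!2}\d t,\\
\d\qv{\rho^N(x),\rho^{\otimes N}(x)}_t
&= \sum_{\hat l = 1}^{\tilde m}\Bigl(\sum_{i,\alpha} \nu_t^{\alpha,\hat l}(x_i)\,\partial_{x_{i,\alpha}}\rho_t^{N}(x)\Bigr)\Bigl(\sum_{j,\beta} \nu_t^{\beta,\hat l}(x_j)\,\partial_{x_{j,\beta}}\rho_t^{\otimes N}(x)\Bigr)\d t,
\end{align*}
and then use the identity $\partial \rho = \rho\,\partial \log \rho$ inside these brackets to absorb the factors $\rho_t^{\otimes N}(x)$ and $(\rho_t^{\otimes N}(x))^{2}$ present in $1/v$ and $u/(2v^{2})$ respectively. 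This is precisely what converts the covariation into the $\rho_t^N \cdot \partial \log \rho^N \cdot \partial \log \rho^{\otimes N}$ term and the variation into the $-\tfrac{\rho_t^N}{2}(\partial \log \rho^{\otimes N})^{2}$ term of~\eqref{eq: ito_aux_2}.

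Finally, the measurability statement is handled verbatim as in Lemma~\ref{lem: ito_aux_1}: the integrands appearing on the right-hand side of~\eqref{eq: ito_aux_2} depend continuously on $(t,x)$ by the smoothness from Proposition~\ref{prop: solution_ito_proces_rep} together with the uniform estimate~\eqref{eq: smooth_case_uniform-estimate}, so Kolmogorov's continuity criterion applied jointly in $(t,x)$ yields a continuous version of the stochastic integral. One then restricts to a single full-measure set $\tilde \Omega$ on which both the It\^o representations of Proposition~\ref{prop: solution_ito_proces_rep} and the identity~\eqref{eq: ito_aux_2} hold for every $(t,x) \in [0,T] \times \R^{dN}$, and measurability in $(\omega,s,x)$ is automatic from joint continuity.
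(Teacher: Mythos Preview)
Your proposal is correct and follows essentially the same route as the paper: apply It\^o's formula to $(u,v)\mapsto u\log v$, substitute the dynamics from Proposition~\ref{prop: solution_ito_proces_rep}, compute the covariation and quadratic variation of the two densities via the common-noise integrands, and convert the bracket terms using $\partial\rho=\rho\,\partial\log\rho$; the measurability argument is also identical. Your explicit discussion of strict positivity (or the $\varphi_\epsilon$ regularisation) is in fact more careful than the paper's proof, which defers this point to a later remark.
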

\begin{proof}
Applying It\^{o}'s formula to \(x \log(y)\) we obtain 
\begin{align*}
\Id \rho_t^N (x) \log(\rho_t^{\otimes N}(x))  
=&\;  \log(\rho_t^{\otimes N}(x)) \Id \rho_t^{N}(x) + \frac{\rho_t^{N}(x)}{\rho_t^{\otimes N}(x)} \Id \rho_t^{\otimes N}(x)
+ \frac{1}{\rho_t^{\otimes N}(x)} \Id \langle \rho_t^{N}(x),\rho_t^{\otimes N}(x) \rangle  \\
&- \frac{\rho_t^{N}(x)}{2 (\rho_t^{\otimes N}(x))^2} \Id \langle \rho_t^{\otimes N}(x) \rangle  
\end{align*}
Computing the quadratic variation we find 
\begin{align*}
\Id \langle \rho_t^{\otimes N}(x) \rangle 
&=  \Id \bigg \langle \sum\limits_{i=1}^N  \sum\limits_{\alpha=1}^d \sum\limits_{\hat{l}=1}^{\tilde{m}}   \nu^{\alpha,\hat{l}}(t,x_i) \partial_{x_{i,\alpha}} \rho_t^{\otimes N}(x)   W_t^{\hat{l}} \bigg\rangle \\
&=   \sum\limits_{i,j=1}^N  \sum\limits_{\alpha,\beta=1}^d \sum\limits_{\hat{l}=1}^{\tilde{m}}  (\nu_t^{\alpha,\hat{l}}(x_i) \partial_{x_{i,\alpha}} \rho_t^{\otimes N}(x) ) (\nu_t^{\alpha,\hat{l}}(x_j) \partial_{x_{j,\beta}} \rho_t^{\otimes N}(x) )  \Id  t \\ 
&= \sum\limits_{\hat{l}=1}^{\tilde{m}}  \bigg(\sum\limits_{i=1}^N  \sum\limits_{\alpha=1}^d  \nu_t^{\alpha,\hat{l}}(x_i) \partial_{x_{i,\alpha}} \rho_t^{\otimes N}(x)  \bigg)^2 \Id  t,
\end{align*}
where we used the fact that \(\Id \langle W_t^{l}, W_t^{\tilde{l}} \rangle = \delta_{l, \tilde{l}}  \Id t \). 
Now, by similar arguments we obtain
\begin{align*}
&\; \Id \langle \rho_t^{N}(x),\rho_t^{\otimes N}(x) \rangle \\
=&\;  \Id \bigg \langle \sum\limits_{i=1}^N  \sum\limits_{\alpha=1}^d \sum\limits_{l=1}^{\tilde{m}}   \nu_t^{\alpha,l}(x_i) \partial_{x_{i,\alpha}} \rho_t^{N}(x)   W_t^{l},  \sum\limits_{j=1}^N  \sum\limits_{\beta=1}^d \sum\limits_{\hat{l}=1}^{\tilde{m}}   \nu_t^{\beta,\hat{l}}(x_j) \partial_{x_{j,\beta}} \rho_t^{\otimes N}(x)   W_t^{\hat{l}}  \bigg\rangle \\
=&\;  \sum\limits_{i,j=1}^N  \sum\limits_{\alpha,\beta=1}^d \sum\limits_{\hat{l}=1}^{\tilde{m}}  (\nu_t^{\alpha,\hat{l}}(x_i) \partial_{x_{i,\alpha}} \rho_t^{ N} (x)) (\nu_t^{\alpha,\hat{l}}(x_j) \partial_{x_{j,\beta}} \rho_t^{\otimes N} (x))  \Id  t. 
\end{align*}
Consequently, we arrive at 
\begin{align*}
&\; \Id \rho_t^N (x) \log(\rho_t^{\otimes N}(x))  \\
= &\; \log(\rho_t^{\otimes N}(x))  \bigg( \sum\limits_{i=1}^N   \nabla_{x_i} \cdot \bigg( \frac{1}{N} \sum\limits_{j=1}^N k(x_i-x_j) \rho_t^N(x) \bigg) \Id t  - \sum\limits_{i=1}^N  \sum\limits_{\alpha=1}^d \sum\limits_{\hat{l}=1}^{\tilde{m}}  \nu_t^{\alpha,\hat{l}}(x_i) \partial_{x_{i,\alpha}} \rho_t^{N}(x)   \Id W_t^{\hat{l}}   \\
&+   \frac{1}{2} \sum\limits_{i=1}^N \sum\limits_{\alpha,\beta=1}^d  \partial_{x_{i,\alpha}}\partial_{x_{i,\beta}} ( [\sigma_t(x_i)\sigma_t(x_i)^{\mathrm{T}}]_{(\alpha,\beta)}  \rho_t^N(x)  \Id t   \\
&+   \frac{1}{2} \sum\limits_{i,j=1}^N \sum\limits_{\alpha,\beta=1}^d  \partial_{x_{i,\alpha}}\partial_{x_{j,\beta}} ( [\nu_t(x_i)\nu_t(x_j)^{\mathrm{T}}]_{(\alpha,\beta)} ) \rho_t^N(x) )   \Id t \bigg)  \\
&+ \frac{\rho_t^N(x)}{\rho_t^{\otimes N}(x)} 
\bigg( \sum\limits_{i=1}^N   \nabla_{x_i} \cdot \bigg( (k*\rho_t)(x_i) \rho_t^{\otimes N}(x) \bigg) \Id t  - \sum\limits_{i=1}^N  \sum\limits_{\alpha=1}^d \sum\limits_{\hat{l}=1}^{\tilde{m}}  \nu_t^{\alpha,\hat{l}}(x_i) \partial_{x_{i,\alpha}} \rho_t^{\otimes N}(x)   \Id W_t^{\hat{l}}   \\
&+   \frac{1}{2} \sum\limits_{i=1}^N \sum\limits_{\alpha,\beta=1}^d  \partial_{x_{i,\alpha}}\partial_{x_{i,\beta}} ( [\sigma_t(x_i)\sigma_t(x_i)^{\mathrm{T}}]_{(\alpha,\beta)} \rho_t^{\otimes N}(x) )  \Id t  \\
&+  \frac{1}{2} \sum\limits_{i,j=1}^N \sum\limits_{\alpha,\beta=1}^d  \partial_{x_{i,\alpha}}\partial_{x_{j,\beta}} ( [\nu_t(x_i)\nu_t(x_j)^{\mathrm{T}}]_{(\alpha,\beta)} ) \rho_t^{\otimes N}(x) )  \Id t \bigg) \\
&+\frac{1}{\rho_t^{\otimes N}(x)} 
 \sum\limits_{i,j=1}^N  \sum\limits_{\alpha,\beta=1}^d \sum\limits_{\hat{l}=1}^{\tilde{m}}  (\nu_t^{\alpha,\hat{l}}(x_i) \partial_{x_{i,\alpha}} \rho_t^{ N} (x)) (\nu_t^{\alpha,\hat{l}}(x_j) \partial_{x_{j,\beta}} \rho_t^{\otimes N} (x))  \Id  t \\
&-  \frac{\rho_t^{N}(x)}{2 (\rho_t^{\otimes N}(x))^2}
 \sum\limits_{\hat{l}=1}^{\tilde{m}}  \bigg(\sum\limits_{i=1}^N  \sum\limits_{\alpha=1}^d  \nu_t^{\alpha,\hat{l}}(x_i) \partial_{x_{i,\alpha}} \rho_t^{\otimes N}(x)  \bigg)^2 \Id  t. 
\end{align*}
For the last two term we find 
\begin{align*}
&\; \frac{1}{\rho_t^{\otimes N}(x)} 
 \sum\limits_{i,j=1}^N  \sum\limits_{\alpha,\beta=1}^d \sum\limits_{\hat{l}=1}^{\tilde{m}}  (\nu_t^{\alpha,\hat{l}}(x_i) \partial_{x_{i,\alpha}} \rho_t^{ N} (x)) (\nu_t^{\alpha,\hat{l}}(x_j) \partial_{x_{j,\beta}} \rho_t^{\otimes N} (x))  \Id  t \\
&-  \frac{\rho_t^{N}(x)}{2 (\rho_t^{\otimes N}(x))^2}
 \sum\limits_{\hat{l}=1}^{\tilde{m}}  \bigg(\sum\limits_{i=1}^N  \sum\limits_{\alpha=1}^d  \nu_t^{\alpha,\hat{l}}(x_i) \partial_{x_{i,\alpha}} \rho_t^{\otimes N}(x)  \bigg)^2 \Id  t \\
=&\; \rho_t^{ N}(x)
 \sum\limits_{i,j=1}^N  \sum\limits_{\alpha,\beta=1}^d \sum\limits_{\hat{l}=1}^{\tilde{m}}  (\nu_t^{\alpha,\hat{l}}(x_i) \partial_{x_{i,\alpha}} \log(\rho_t^{ N})  (x)) (\nu_t^{\alpha,\hat{l}}(x_j) \partial_{x_{j,\beta}} \log( \rho_t^{\otimes N})  (x))  \Id  t \\
&-  \frac{\rho_t^{N}(x)}{2}
 \sum\limits_{\hat{l}=1}^{\tilde{m}}  \bigg(\sum\limits_{i=1}^N  \sum\limits_{\alpha=1}^d  \nu_t^{\alpha,\hat{l}}(x_i) \partial_{x_{i,\alpha}} \log(\rho_t^{\otimes N}(x) )  \bigg)^2 \Id  t , 
\end{align*}
which with the same measurability arguments as in Lemma~\ref{lem: ito_aux_1} implies our claim. 
\end{proof}

In the next step we want to combine both Lemma~\ref{lem: ito_aux_1} and Lemma~\ref{lem: ito_aux_2}. We make a crucial observation that the difference of the last term in equation~\eqref{eq: ito_aux_1} and the last two terms in equation~\eqref{eq: ito_aux_2} creating a square, i.e. 
\begin{align} \label{eq: convariation_perfect_square}
\begin{split}
&\;  \frac{\rho_t^N}{2}  \sum\limits_{\hat{l}=1}^{\tilde{m}}  \bigg( \sum\limits_{i=1}^N  \sum\limits_{\alpha=1}^d  \nu_t^{\alpha,\hat{l}}(x_i) \partial_{x_{i,\alpha}} \log(\rho_t^{ N}(x))  \bigg)^2  \Id  t \\
&- \rho_t^{ N}(x)
 \sum\limits_{i,j=1}^N  \sum\limits_{\alpha,\beta=1}^d \sum\limits_{\hat{l}=1}^{\tilde{m}}  (\nu_t^{\alpha,l}(x_i) \partial_{x_{i,\alpha}} \log(\rho_t^{ N})  (x)) (\nu_t^{\alpha,\hat{l}}(x_j) \partial_{x_{j,\beta}} \log( \rho_t^{\otimes N})  (x))  \Id  t \\
&+  \frac{\rho_t^{N}(x)}{2}
 \sum\limits_{\hat{l}=1}^{\tilde{m}}  \bigg(\sum\limits_{i=1}^N  \sum\limits_{\alpha=1}^d  \nu_t^{\alpha,\hat{l}}(x_i) \partial_{x_{i,\alpha}} \log(\rho_t^{\otimes N}(x) )  \bigg)^2 \Id  t \\
 =&\; \frac{\rho_t^{N}(x)}{2}
 \sum\limits_{\hat{l}=1}^{\tilde{m}}  \bigg(\sum\limits_{i=1}^N  \sum\limits_{\alpha=1}^d  \nu_t^{\alpha,\hat{l}}(x_i) \partial_{x_{i,\alpha}} \log\bigg(\frac{\rho_t^{N}(x)}{\rho_t^{\otimes N} (x)} \bigg )  \bigg)^2 \Id  t.
 \end{split}
\end{align}
As a result we obtain the following dynamic 
\begin{align*}
&\; \Id (\rho_t^N(x) (\log(\rho_t^N(x) -\log(\rho_t^{\otimes N}(x)) )   \\
= &\; \bigg(\log\bigg( \frac{\rho_t^N(x)}{\rho_t^{\otimes N}(x)}\bigg) + 1 \bigg)  \bigg( \sum\limits_{i=1}^N   \nabla_{x_i} \cdot \bigg( \frac{1}{N} \sum\limits_{j=1}^N k(x_i-x_j) \rho_t^N(x) \bigg) \Id t \\
&  - \sum\limits_{i=1}^N  \sum\limits_{\alpha=1}^d \sum\limits_{l=1}^{\tilde{m}}  \nu_t^{\alpha,l}(x_i) \partial_{x_{i,\alpha}} \rho_t^{N}(x)   \Id W_s^{l}   \\
&+   \frac{1}{2} \sum\limits_{i=1}^N \sum\limits_{\alpha,\beta=1}^d  \partial_{x_{i,\alpha}}\partial_{x_{i,\beta}} ( [\sigma_t(x_i)\sigma_t(x_i)^{\mathrm{T}}]_{(\alpha,\beta)}  \rho_t^N(x)  \Id t   \\
&+   \frac{1}{2} \sum\limits_{i,j=1}^N \sum\limits_{\alpha,\beta=1}^d  \partial_{x_{i,\alpha}}\partial_{x_{j,\beta}} ( [\nu_t(x_i)\nu_t(x_j)^{\mathrm{T}}]_{(\alpha,\beta)} ) \rho_t^N(x) )   \Id t \bigg)  \\
&+ \frac{\rho_t^N(x)}{\rho_t^{\otimes N}(x)} 
\bigg( \sum\limits_{i=1}^N   \nabla_{x_i} \cdot \bigg( (k*\rho_t)(x_i) \rho_t^{\otimes N}(x) \bigg) \Id t   - \sum\limits_{i=1}^N  \sum\limits_{\alpha=1}^d \sum\limits_{\hat{l}=1}^{\tilde{m}}  \nu_t^{\alpha,\hat{l}}(x_i) \partial_{x_{i,\alpha}} \rho_t^{\otimes N}(x)   \Id W_s^{\hat{l}}   \\
&+  \frac{1}{2} \sum\limits_{i=1}^N \sum\limits_{\alpha,\beta=1}^d  \partial_{x_{i,\alpha}}\partial_{x_{i,\beta}} ( [\sigma_t(x_i)\sigma_t(x_i)^{\mathrm{T}}]_{(\alpha,\beta)}  \rho_t^{\otimes N}(x)  \Id t   \\
&+   \frac{1}{2} \sum\limits_{i,j=1}^N \sum\limits_{\alpha,\beta=1}^d  \partial_{x_{i,\alpha}}\partial_{x_{j,\beta}} ( [\nu_t(x_i)\nu_t(x_j)^{\mathrm{T}}]_{(\alpha,\beta)} ) \rho_t^{\otimes N}(x) )   \Id t \bigg)  \\
&+ \frac{\rho_t^{N}(x)}{2}
 \sum\limits_{\hat{l}=1}^{\tilde{m}}  \bigg(\sum\limits_{i=1}^N  \sum\limits_{\alpha=1}^d  \nu_t^{\alpha,\hat{l}}(x_i) \partial_{x_{i,\alpha}} \log\bigg(\frac{\rho_t^{N}(x)}{\rho_t^{\otimes N} (x)} \bigg )  \bigg)^2 \Id  t. 
\end{align*}
After integrating over \(\R^{dN}\), the divergence free assumption of \(\nu\) and the stochastic Fubini Theorem~\cite{veraar2012}  kills the stochastic integrals. 
Hence, applying Fubini's theorem (everything is product measurable) for the Lebesgue integrals we find 
\begin{align*}
&\; \mathcal{H}(\rho_t^{N}  \vert \rho_t^{\otimes N})  \\
=&\;   \int_0^t \int_{\R^{dN}}  \bigg(\log\bigg( \frac{\rho_s^N(x)}{\rho_s^{\otimes N}(x)}\bigg) + 1 \bigg)  \bigg( \sum\limits_{i=1}^N   \nabla_{x_i} \cdot \bigg( \frac{1}{N} \sum\limits_{j=1}^N k(x_i-x_j) \rho_s^N(x) \bigg)  \\
&+    \frac{1}{2} \sum\limits_{i=1}^N \sum\limits_{\alpha,\beta=1}^d  \partial_{x_{i,\alpha}}\partial_{x_{i,\beta}} ( [\sigma_s(x_i)\sigma_s(x_i)^{\mathrm{T}}]_{(\alpha,\beta)}  \rho_s^N(x)   \\
&+   \frac{1}{2} \sum\limits_{i,j=1}^N \sum\limits_{\alpha,\beta=1}^d  \partial_{x_{i,\alpha}}\partial_{x_{j,\beta}} ( [\nu_s(x_i)\nu_s(x_j)^{\mathrm{T}}]_{(\alpha,\beta)} ) \rho_s^N(x) )   \bigg)  \\
&-\frac{\rho_s^N(x)}{\rho_s^{\otimes N}(x)} 
\bigg( \sum\limits_{i=1}^N   \nabla_{x_i} \cdot \bigg( (k*\rho_s)(x_i) \rho_s^{\otimes N}(x) \bigg)\\ 
&- \frac{1}{2} \sum\limits_{i=1}^N \sum\limits_{\alpha,\beta=1}^d  \partial_{x_{i,\alpha}}\partial_{x_{i,\beta}} ( [\sigma_s(x_i)\sigma_s(x_i)^{\mathrm{T}}]_{(\alpha,\beta)}  \rho_s^{\otimes N}(x)    \\
&-   \frac{1}{2} \sum\limits_{i,j=1}^N \sum\limits_{\alpha,\beta=1}^d  \partial_{x_{i,\alpha}}\partial_{x_{j,\beta}} ( [\nu_s(x_i)\nu_s(x_j)^{\mathrm{T}}]_{(\alpha,\beta)} ) \rho_s^{\otimes N}(x) )   \bigg)  \\
&+ \frac{\rho_s^{N}(x)}{2}
 \sum\limits_{\hat{l}=1}^{\tilde{m}}  \bigg(\sum\limits_{i=1}^N  \sum\limits_{\alpha=1}^d  \nu_s^{\alpha,\hat{l}}(x_i) \partial_{x_{i,\alpha}} \log\bigg(\frac{\rho_s^{N}(x)}{\rho_s^{\otimes N} (x)} \bigg )  \bigg)^2   \Id x  \Id  s  . 
\end{align*}
We notice that the constant one in the first term vanishes by the divergence structure of the equation and the integration over the whole domain \(\R^{dN}\). 
In the next step let us use the cancellation property of \(\sigma\) and \(\nu\) to rewrite the second order differential operator
\begin{align*}
& \; \frac{1}{2} \sum\limits_{i,j=1}^N \sum\limits_{\alpha,\beta=1}^d  
  \partial_{x_{i,\alpha}}\partial_{x_{j,\beta}} \bigg( \bigg( \sum\limits_{l=1}^{\tilde{m}} \sigma_s^{\alpha,l}(x_i)\sigma_s^{\beta,l}(x_j) \delta_{i,j} + \sum\limits_{\hat{l}=1}^{\tilde{m}} \nu_s^{\alpha,\hat{l}}(x_i)\nu_s^{\beta,\hat{l}}(x_j) \bigg) \rho_s^N(x) \bigg)  \\
  &=\frac{1}{2} \sum\limits_{i,j=1}^N \sum\limits_{\alpha,\beta=1}^d  
  \partial_{x_{i,\alpha}} \bigg( \bigg( \sum\limits_{l=1}^{\tilde{m}} \sigma_s^{\alpha,l}(x_i)\sigma_s^{\beta,l}(x_j) \delta_{i,j} + \sum\limits_{\hat{l}=1}^{\tilde{m}} \nu_s^{\alpha,\hat{l}}(x_i)\nu_s^{\beta,\hat{l}}(x_j) \bigg) \partial_{x_{j,\beta}} \rho_s^N(x) \bigg)  .
 \end{align*}
 The same inequality holds if \(\rho^N\) is replaced by \(\rho^{\otimes N}\). Hence, if we only look at the terms containing \(\nu\) we arrive at the following expression for the coefficient \(\nu\), 
 \begin{align*}
 &\;  \frac{1}{2} \sum\limits_{i,j=1}^N \sum\limits_{\alpha,\beta=1}^d \sum\limits_{\hat{l}=1}^{\tilde{m}} \int_{\R^{dN}}  \log\bigg( \frac{\rho_s^N(x)}{\rho_s^{\otimes N}(x)}\bigg) 
  \partial_{x_{i,\alpha}} \bigg( \nu_s^{\alpha,\hat{l}}(x_i)\nu_s^{\beta,\hat{l}}(x_j)   \partial_{x_{j,\beta}} \rho_s^N(x) \bigg) \Id x \\
 &  - \frac{1}{2} \sum\limits_{i,j=1}^N \sum\limits_{\alpha,\beta=1}^d \sum\limits_{\hat{l}=1}^{\tilde{m}} \int_{\R^{dN}} \frac{\rho_s^N(x)}{\rho_s^{\otimes N}(x)}   \partial_{x_{i,\alpha}}\bigg( \nu_s^{\alpha,\hat{l}}(x_i)\nu_s^{\beta,\hat{l}}(x_j)  \partial_{x_{j,\beta}}  \rho_s^{\otimes N}(x) \bigg)   \Id x \\
  =&\; - \frac{1}{2} \sum\limits_{i,j=1}^N \sum\limits_{\alpha,\beta=1}^d \sum\limits_{\hat{l}=1}^{\tilde{m}} \int_{\R^{dN}}   \partial_{x_{i,\alpha}}  \log\bigg( \frac{\rho_s^N(x)}{\rho_s^{\otimes N}(x)}\bigg) 
 \nu_s^{\alpha,\hat{l}}(x_i)\nu_s^{\beta,\hat{l}}(x_j)   \partial_{x_{j,\beta}} \rho_s^N(x) \Id x  \\
 &+ \frac{1}{2} \sum\limits_{i,j=1}^N \sum\limits_{\alpha,\beta=1}^d \sum\limits_{\hat{l}=1}^{\tilde{m}} \int_{\R^{dN}} \partial_{x_{i,\alpha}} \bigg( \frac{\rho_s^N(x)}{\rho_s^{\otimes N}(x)}  \bigg) \nu_s^{\alpha,\hat{l}}(x_i)\nu_s^{\beta,\hat{l}}(x_j)  \partial_{x_{j,\beta}}  \rho_s^{\otimes N}(x) \Id x\\ 
 =&\; - \frac{1}{2} \sum\limits_{i,j=1}^N \sum\limits_{\alpha,\beta=1}^d \sum\limits_{\hat{l}=1}^{\tilde{m}} \int_{\R^{dN}}  \rho_s^N(x)  \partial_{x_{i,\alpha}}  \log\bigg( \frac{\rho_s^N(x)}{\rho_s^{\otimes N}(x)}\bigg) 
 \nu_s^{\alpha,\hat{l}}(x_i)\nu_s^{\beta,\hat{l}}(x_j)   \partial_{x_{j,\beta}} \log(\rho_s^N(x)) \Id x \\
 & + \frac{1}{2} \sum\limits_{i,j=1}^N \sum\limits_{\alpha,\beta=1}^d \sum\limits_{\hat{l}=1}^{\tilde{m}} \int_{\R^{dN}} \rho_s^N(x) \partial_{x_{i,\alpha}} \log \bigg( \frac{\rho_s^N(x)}{\rho_s^{\otimes N}(x)}  \bigg) \nu_s^{\alpha,\hat{l}}(x_i)\nu_s^{\beta,\hat{l}}(x_j)  \partial_{x_{j,\beta}}  \log(\rho_s^{\otimes N})(x) \Id x \\
  =&\; - \frac{1}{2} \sum\limits_{\hat{l}=1}^{\tilde{m}} \int_{\R^{dN}}   \rho_s^N(x) \bigg(  \sum\limits_{i=1}^N  \sum\limits_{\alpha=1}^d \nu_s^{\alpha,\hat{l}}(x_i)  \partial_{x_{i,\alpha}}   \log\bigg( \frac{\rho_s^N(x)}{\rho_s^{\otimes N}(x)}\bigg)\bigg)^2 \Id x . 
\end{align*}
Hence, we obtain exactly the same term as in the covariation calculations~\eqref{eq: convariation_perfect_square} but with a negative sign. Consequently, they vanish and we do not see any contribution of the common noise \(\nu\) in the relative entropy. 
Let us summarize our result in the following proposition. 

\begin{proposition}
Let \(\rho_t^{N} \), \(\rho_t^{\otimes N}\) be given by Proposition~\ref{prop: solution_ito_proces_rep}. Then we have the following representation of the reltive entropy
\begin{align*}
\mathcal{H}(\rho_t^{N}  \vert \rho_t^{\otimes N}) 
=&\;   \int_0^t  \int_{\R^{dN}}  \sum\limits_{i=1}^N  \log\bigg( \frac{\rho_s^N(x)}{\rho_s^{\otimes N}(x)}\bigg)   \nabla_{x_i} \cdot \bigg( \frac{1}{N} \sum\limits_{j=1}^N k(x_i-x_j) \rho_s^N(x) \bigg)  \\
&- \sum\limits_{i=1}^N  \frac{\rho_s^N(x)}{\rho_s^{\otimes N}(x)} 
  \nabla_{x_i} \cdot \bigg( (k*\rho_s)(x_i) \rho_s^{\otimes N}(x) \bigg)  \\
&+   \frac{1}{2} \sum\limits_{i=1}^N \sum\limits_{\alpha,\beta=1}^d  \log\bigg( \frac{\rho_s^N(x)}{\rho_s^{\otimes N}(x)}\bigg)    \partial_{x_{i,\alpha}} \bigg(  [\sigma_s(x_i)\sigma_s(x_i)^{\mathrm{T}}]_{(\alpha,\beta)}  \partial_{x_{i,\beta}} \rho_s^N(x) \bigg)\\
&- \frac{1}{2} \sum\limits_{i=1}^N \sum\limits_{\alpha,\beta=1}^d  \frac{\rho_s^N(x)}{	\rho_s^{\otimes N}(x)}\partial_{x_{i,\alpha}} \bigg(  [\sigma_s(x_i)\sigma_s(x_i)^{\mathrm{T}}]_{(\alpha,\beta)}   \partial_{x_{i,\beta}} \rho_s^{\otimes N}(x) \bigg) \Id x  \Id  s .
\end{align*}

\end{proposition}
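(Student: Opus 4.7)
The plan is to assemble the proposition directly from the Itô computations in Lemmas~\ref{lem: ito_aux_1} and~\ref{lem: ito_aux_2}, together with the perfect-square identity~\eqref{eq: convariation_perfect_square}, and then integrate in space while systematically removing every term containing the common-noise coefficient~$\nu$. First I would take the difference of the two Itô representations to obtain a pointwise-in-$x$ dynamic for the process $\rho_t^N(x)(\log \rho_t^N(x)-\log\rho_t^{\otimes N}(x))$, collecting all covariation contributions into the square $\tfrac{\rho_s^N}{2}\sum_{\hat l}\bigl(\sum_{i,\alpha}\nu^{\alpha,\hat l}_s(x_i)\partial_{x_{i,\alpha}}\log(\rho_s^N/\rho_s^{\otimes N})\bigr)^2\dd s$ displayed in~\eqref{eq: convariation_perfect_square}.

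Next I would integrate this identity over $\R^{dN}$. The stochastic Fubini theorem~\cite{veraar2012} applies because of the smoothness and uniform $L^2$-bounds provided by Proposition~\ref{prop: solution_ito_proces_rep} and Lemma~\ref{lem: priori_l2}, so the $\dd W_s^{\hat l}$-terms become space integrals of divergences of $\nu^{\cdot,\hat l}\rho$ against $\log(\rho^N/\rho^{\otimes N})+1$ or $\rho^N/\rho^{\otimes N}$; integrating by parts and invoking the divergence-free property (Assumption~\ref{ass: diffusion_coef}\eqref{item: divergence_free_common_noise}) together with integrability at infinity forces these stochastic terms to vanish. The constant~$+1$ appearing in the factor $(\log(\rho_s^N/\rho_s^{\otimes N})+1)$ in front of the drift of $\rho^N$ also integrates to zero, again by the divergence structure of the Fokker--Planck drift and mass conservation.

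The main obstacle is to eliminate the quadratic square $\tfrac{\rho^N_s}{2}\sum_{\hat l}(\sum_{i,\alpha}\nu^{\alpha,\hat l}\partial_{x_{i,\alpha}}\log(\rho^N/\rho^{\otimes N}))^2$, which must cancel exactly against the $\nu$-contributions in the second-order operator. To do so I would use Assumption~\ref{ass: diffusion_coef}\eqref{item: cancelation_sigma_nu} to rewrite
\begin{equation*}
\tfrac12\sum_{i,j,\alpha,\beta,\hat l}\partial_{x_{i,\alpha}}\partial_{x_{j,\beta}}\bigl(\nu^{\alpha,\hat l}(x_i)\nu^{\beta,\hat l}(x_j)\rho\bigr)
=\tfrac12\sum_{i,j,\alpha,\beta,\hat l}\partial_{x_{i,\alpha}}\bigl(\nu^{\alpha,\hat l}(x_i)\nu^{\beta,\hat l}(x_j)\partial_{x_{j,\beta}}\rho\bigr),
\end{equation*}
for both $\rho=\rho^N$ and $\rho=\rho^{\otimes N}$. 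A single integration by parts in $x_{i,\alpha}$ against $\log(\rho^N/\rho^{\otimes N})$ (respectively $\rho^N/\rho^{\otimes N}$) then converts the two $\nu$-double-derivative terms into the integrated quantity $-\tfrac12\int\rho_s^N\sum_{\hat l}(\sum_{i,\alpha}\nu^{\alpha,\hat l}\partial_{x_{i,\alpha}}\log(\rho^N/\rho^{\otimes N}))^2\dd x$, which exactly cancels the positive square coming from~\eqref{eq: convariation_perfect_square}.

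What remains is precisely the interaction-drift contribution involving $k$ and the $\sigma\sigma^{\mathrm T}$ second-order term; applying the same divergence-form rewriting coming from Assumption~\ref{ass: diffusion_coef}\eqref{item: cancelation_sigma_nu} to $\sigma\sigma^{\mathrm T}$ yields the $\partial_{x_{i,\alpha}}([\sigma\sigma^{\mathrm T}]_{(\alpha,\beta)}\partial_{x_{i,\beta}}\rho)$ form stated in the proposition. Assembling these surviving terms gives the claimed identity; the only technical point to be careful about is the application of the stochastic Fubini theorem and the integration by parts at infinity, both of which are justified by the smoothness, the uniform $L^2$-bound of Lemma~\ref{lem: priori_l2}, and the moment estimate of Lemma~\ref{lem: moment_estimate_spde}.
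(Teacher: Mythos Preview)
Your proposal is correct and follows essentially the same route as the paper: subtract Lemma~\ref{lem: ito_aux_2} from Lemma~\ref{lem: ito_aux_1}, use the perfect-square identity~\eqref{eq: convariation_perfect_square}, integrate over $\R^{dN}$ using stochastic Fubini and the divergence-free assumption on $\nu$ to eliminate the martingale part, and then exploit Assumption~\ref{ass: diffusion_coef}\eqref{item: cancelation_sigma_nu} to rewrite the second-order operator in divergence form so that the $\nu$-contribution collapses to the negative of the covariation square. One small refinement: the two stochastic integrals do not each vanish separately after integration by parts---rather, both reduce (via $\nu\cdot\nabla\rho=\nabla\cdot(\nu\rho)$ and one integration by parts) to $\pm\int_{\R^{dN}}\rho_s^N\,\nu\cdot\nabla\log(\rho_s^N/\rho_s^{\otimes N})\,\d x$ and cancel against each other, which is exactly how the paper's argument works as well.
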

\begin{remark}
 This phenomenon seems maybe strange but it somehow shows that the common noise has no effect on the expected relative entropy \(\E ( \mathcal{H}(\rho_t^{N}  \vert \rho_t^{\otimes N}) )\) as long as both measures are conditioned on the common noise. This is a crucial observation. If both measures are viewed under the information of the common noise \(W\), we expect that the particles behave similar as in the classical mean-field limit setting and this phenomenon is exactly reflected in the relative entropy. 
\end{remark}

From the representation we obtain the classical relative entropy bound. 

\begin{corollary} \label{cor: entropy_estimate}
Let \(\rho_t^{N} \), \(\rho_t^{\otimes N}\) be given by Proposition~\ref{prop: solution_ito_proces_rep}.
We have the following relative entropy inequality 
\begin{align*}
 \mathcal{H}(\rho_t^{N}  \vert \rho_t^{\otimes N})  
\le &\;  - \frac{\delta}{4}  \sum\limits_{i=1}^N \int\limits_0^t  \int_{\R^{dN}}  \rho_s^N(x)  \bigg| \nabla_{x_i} \log\bigg( \frac{\rho_s^N(x)}{\rho_s^{\otimes N}(x)}\bigg)\bigg|^2  \Id x \Id s  \\
&+ \frac{1}{\delta} \sum\limits_{i=1}^N  \int\limits_0^t  \rho_s^N(x)\bigg| \frac{1}{N} \sum\limits_{j=1}^N k(x_{i}-x_j)  - (k*\rho_s)(x_{i})\bigg|^2  \Id x  \Id s. 
\end{align*}
\end{corollary}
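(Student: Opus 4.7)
Starting from the identity for $\mathcal{H}(\rho_t^{N}\vert\rho_t^{\otimes N})$ established in the preceding proposition, the plan is to integrate by parts in every term so that all derivatives acting on $\rho^N$ or $\rho^{\otimes N}$ get transferred to the logarithmic factors, use the identity $\partial \rho = \rho\,\partial \log\rho$ to express everything as weighted $L^2$ quantities against the measure $\rho_s^N\,\mathrm{d}x$, and finish with Young's inequality. Boundary terms at infinity vanish because the smoothing in Proposition~\ref{prop: solution_ito_proces_rep}, combined with the moment estimate in Lemma~\ref{lem: moment_estimate_spde}, guarantees enough decay to justify the integrations by parts.

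For the second-order (diffusion) terms, one integration by parts on $\partial_{x_{i,\alpha}}$ yields, after combining the $\rho_s^N$ and $\rho_s^{\otimes N}$ contributions and using $\partial_{x_{i,\beta}}\rho_s^N = \rho_s^N \partial_{x_{i,\beta}}\log\rho_s^N$ and $\partial_{x_{i,\beta}}\rho_s^{\otimes N} = \rho_s^{\otimes N} \partial_{x_{i,\beta}}\log\rho_s^{\otimes N}$, the clean quadratic form
\begin{equation*}
-\frac{1}{2}\sum_{i=1}^{N}\sum_{\alpha,\beta=1}^{d}\int_{\R^{dN}}\rho_s^N\,[\sigma_s(x_i)\sigma_s(x_i)^{\mathrm{T}}]_{(\alpha,\beta)}\,\partial_{x_{i,\alpha}}\log\!\Bigl(\tfrac{\rho_s^N}{\rho_s^{\otimes N}}\Bigr)\partial_{x_{i,\beta}}\log\!\Bigl(\tfrac{\rho_s^N}{\rho_s^{\otimes N}}\Bigr)\,\mathrm{d}x,
\end{equation*}
which by the ellipticity condition in Assumption~\ref{ass: diffusion_coef}\eqref{item: ellipticity_idiosy} is bounded above by $-\tfrac{\delta}{2}\sum_i\int\rho_s^N|\nabla_{x_i}\log(\rho_s^N/\rho_s^{\otimes N})|^2\,\mathrm{d}x$.

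For the first-order (drift) terms, integrating by parts once and simplifying gives the single cross-term
\begin{equation*}
\sum_{i=1}^{N}\int_{\R^{dN}}\rho_s^N\,\nabla_{x_i}\log\!\Bigl(\tfrac{\rho_s^N}{\rho_s^{\otimes N}}\Bigr)\cdot\Bigl[(k*\rho_s)(x_i)-\tfrac{1}{N}\sum_{j=1}^{N}k(x_i-x_j)\Bigr]\,\mathrm{d}x.
\end{equation*}
Applying Young's inequality in the form $|a\cdot b|\le \tfrac{\delta}{4}|a|^2+\tfrac{1}{\delta}|b|^2$ pointwise (with $a$ the log-gradient and $b$ the bracketed defect) produces exactly the right-hand side of the claimed inequality, provided the $\tfrac{\delta}{4}|a|^2$ contribution can be absorbed by the dissipation term. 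Since the diffusion gave $-\tfrac{\delta}{2}$ and Young's costs $+\tfrac{\delta}{4}$, the remaining coefficient is $-\tfrac{\delta}{4}$, matching the statement.

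There is no serious obstacle; the whole argument is a careful accounting exercise. The one point that requires some attention is the choice of the Young parameter: one must pick it precisely so that the loss from the cross-term is strictly less than the ellipticity gain, which forces the factor $\delta/4$ (rather than, say, $\delta/2$) in the final bound. All boundary contributions at infinity are discarded thanks to the smoothness and integrability of $\rho_s^N$ and $\rho_s^{\otimes N}$ established earlier.
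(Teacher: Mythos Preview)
Your proposal is correct and follows essentially the same route as the paper's proof: integration by parts on the diffusion terms combined with ellipticity yields the $-\tfrac{\delta}{2}$ Fisher-type dissipation, integration by parts on the drift terms produces the single cross term $\sum_i\int\rho_s^N\,\nabla_{x_i}\log(\rho_s^N/\rho_s^{\otimes N})\cdot[\cdots]\,\mathrm{d}x$, and Young's inequality with parameter chosen so that the $\tfrac{\delta}{4}$ loss is absorbed by the $\tfrac{\delta}{2}$ gain gives exactly the claimed bound. The paper additionally mentions exchangeability of the particles, but this plays no essential role in the estimate itself, so your argument matches the paper's in all substantive respects.
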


\begin{proof}
The proof is a standard entropy computation similar to the ones performed in~\cite{Lacker2023uniform,nikolaev2024}. Applying Young's inequality and the exchangeability of the particles we find 
\begin{align*}
&\; \int_{\R^{dN}}  \sum\limits_{i=1}^N  \log\bigg( \frac{\rho_s^N(x)}{\rho_s^{\otimes N}(x)}\bigg)   \nabla_{x_i} \cdot \bigg( \frac{1}{N} \sum\limits_{j=1}^N k(x_i-x_j) \rho_s^N(x) \bigg)  \\
&- \sum\limits_{i=1}^N  \frac{\rho_s^N(x)}{\rho_s^{\otimes N}(x)} 
  \nabla_{x_i} \cdot \bigg( (k*\rho_s)(x_i) \rho_s^{\otimes N}(x) \bigg) \Id  x \\
 = &\; \sum\limits_{i=1}^N \int_{\R^{dN}} \rho_s^N(x)  \nabla_{x_i} \log\bigg( \frac{\rho_s^N(x)}{\rho_s^{\otimes N}(x)}\bigg)  \cdot  \bigg( \frac{1}{N} \sum\limits_{j=1}^N k(x_i-x_j)  - (k*\rho_s)(x_i)\bigg)  \Id x  \\
\le &\;   \sum\limits_{i=1}^N \int_{\R^{dN}} \frac{\delta }{2} \rho_s^N(x)  \bigg| \nabla_{x_i} \log\bigg( \frac{\rho_s^N(x)}{\rho_s^{\otimes N}(x)}\bigg)\bigg|^2  + \frac{ \rho_s^N(x)}{2\delta}  \bigg| \frac{1}{N} \sum\limits_{j=1}^N k(x_i-x_j)  - (k*\rho_s)(x_i)\bigg|^2  \Id x  \\
\le & \; \sum\limits_{i=1}^N \int_{\R^{dN}} 
\frac{\delta }{4} \rho_s^N(x)  \bigg| \nabla_{x_i} \log\bigg( \frac{\rho_s^N(x)}{\rho_s^{\otimes N}(x)}\bigg)\bigg|^2  +  \frac{ \rho_s^N(x)}{\delta}  \bigg| \frac{1}{N} \sum\limits_{j=1}^N k(x_{i}-x_j)  - (k*\rho_s)(x_{i})\bigg|^2  \Id x  . 
\end{align*}
Using integration by parts and the ellipticity condition on \(\sigma\) we find 
\begin{align*}
&\; \frac{1}{2} \sum\limits_{i=1}^N \sum\limits_{\alpha,\beta=1}^d  \int_{\R^{dN}} \log\bigg( \frac{\rho_s^N(x)}{\rho_s^{\otimes N}(x)}\bigg)    \partial_{x_{i,\alpha}}\bigg(  [\sigma_s(x_i)\sigma_s(x_i)^{\mathrm{T}}]_{(\alpha,\beta)}   \partial_{x_{i,\beta}}  \rho_s^N(x) \bigg)\\
&- \frac{1}{2} \sum\limits_{i=1}^N \sum\limits_{\alpha,\beta=1}^d  \frac{\rho_s^N(x)}{	\rho_s^{\otimes N}(x)}\partial_{x_{i,\alpha}}  \bigg(  [\sigma_s(x_i)\sigma_s(x_i)^{\mathrm{T}}]_{(\alpha,\beta)} \partial_{x_{i,\beta}}   \rho_s^{\otimes N}(x) \bigg) \Id x \\
=&\; - \frac{1}{2} \sum\limits_{i=1}^N \sum\limits_{\alpha,\beta=1}^d  \int_{\R^{dN}} \rho_s^N(x)  \partial_{x_{i,\alpha}} \log\bigg( \frac{\rho_s^N(x)}{\rho_s^{\otimes N}(x)}\bigg)   [\sigma_s(x_i)\sigma_s(x_i)^{\mathrm{T}}]_{(\alpha,\beta)}   \partial_{x_{i,\beta}}  \log(\rho_s^N(x)) \\
&+ \frac{1}{2} \sum\limits_{i=1}^N \sum\limits_{\alpha,\beta=1}^d  \rho_s^N(x)  \partial_{x_{i,\alpha}} \log\bigg( \frac{\rho_s^N(x)}{\rho_s^{\otimes N}(x)}\bigg)   [\sigma_s(x_i)\sigma_s(x_i)^{\mathrm{T}}]_{(\alpha,\beta)} \partial_{x_{i,\beta}}   \log(\rho_s^{\otimes N}(x))  \Id x \\
=&\; - \frac{1}{2} \sum\limits_{i=1}^N \int_{\R^{dN}} \rho_s^N(x) \sum\limits_{\alpha,\beta=1}^d   \partial_{x_{i,\alpha}} \log\bigg( \frac{\rho_s^N(x)}{\rho_s^{\otimes N}(x)}\bigg)   [\sigma_s(x_i)\sigma_s(x_i)^{\mathrm{T}}]_{(\alpha,\beta)}   \partial_{x_{i,\beta}}  \log\bigg(\frac{\rho_s^N(x)}{\rho_s^{\otimes N}(x)} \bigg) \Id x\\
\le &\; -  \frac{\delta}{2}  \sum\limits_{i=1}^N \int_{\R^{dN}}  \rho_s^N(x)  \bigg| \nabla_{x_i} \log\bigg( \frac{\rho_s^N(x)}{\rho_s^{\otimes N}(x)}\bigg)\bigg|^2 \Id x . 
\end{align*} 
Combining both inequalities proves the corollary. 
\end{proof}

\begin{theorem} \label{theorem: main_estimate_smooth_setting}
In the smooth setting we have the following relative entropy bound between the conditional law of the particles system \(\rho^N\) and the the solution \(\rho^{\otimes N}\) of the SPDE~\eqref{eq: chaotic_spde}, 
\begin{equation}
 \E \bigg(\sup\limits_{0 \le t \le T } \mathcal{H}(\rho_t^{N}  \vert \rho_t^{\otimes N}) \bigg) \le C(T,\delta,\norm{k}_{L^\infty}) ,
\end{equation}
where \(C(T,\norm{k}_{L^\infty})\) depends on the finial time \(T\), the ellipticity constant \(\delta\) and \(\norm{k}_{L^\infty} \). 
\end{theorem}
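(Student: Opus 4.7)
The plan is to leverage Corollary~\ref{cor: entropy_estimate} together with the exponential law of large numbers of Jabin--Wang~\cite{JabinWang2018}, applied pathwise in \(\omega\). Since the initial data is i.i.d., we have \(\rho_0^N = \rho_0^{\otimes N}\) and so \(\mathcal{H}(\rho_0^N|\rho_0^{\otimes N})=0\). The negative Fisher-information-like term in Corollary~\ref{cor: entropy_estimate} (with the \(-\delta/4\) prefactor) will be discarded in the bounded-kernel regime, since it is not needed to absorb any cross terms.

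The core task is to bound the fluctuation integral
\[
\mathcal{I}_t := \sum_{i=1}^N \int_0^t \int_{\R^{dN}} \rho_s^N(x)\left|\frac{1}{N}\sum_{j=1}^N k(x_i-x_j) - (k*\rho_s)(x_i)\right|^2 \Id x\, \Id s
\]
in terms of the relative entropy itself plus a constant. The tool is the Donsker--Varadhan change-of-measure inequality: for any measurable \(F\) and \(\eta > 0\),
\[
\int \rho_s^N F\, \Id x \le \frac{1}{\eta}\mathcal{H}(\rho_s^N|\rho_s^{\otimes N}) + \frac{1}{\eta}\log\int \rho_s^{\otimes N} e^{\eta F}\, \Id x.
\]
I would apply this with \(F(x) = \sum_{i=1}^N |\tfrac{1}{N}\sum_j k(x_i-x_j) - (k*\rho_s)(x_i)|^2\). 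By exchangeability, this rewrites as a properly scaled double sum \(\tfrac{1}{N}\sum_{j,l}\phi_s(x_i, x_j, x_l)\) where \(\phi_s\) satisfies the cancellation \(\int \phi_s(\cdot, y, \cdot)\rho_s(y)\,\Id y = 0\) needed for Jabin--Wang's Theorem~3. Since \(k \in L^{\infty}(\R^d)\), \(\phi_s\) is bounded, so for a sufficiently small \(\eta\) depending only on \(\|k\|_{L^{\infty}}\) the exponential moment \(\int \rho_s^{\otimes N} e^{\eta F} \,\Id x\) is bounded uniformly in \(N\) and \(s\) by a constant depending on \(\|k\|_{L^{\infty}}\). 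This estimate is pathwise in \(\omega\): for each fixed realization of the common noise, \(\rho_s(\omega,\cdot)\) is a classical probability density on \(\R^d\) and Jabin--Wang's exponential inequality applies directly.

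Inserting this bound into Corollary~\ref{cor: entropy_estimate} and choosing \(\eta\) appropriately (depending only on \(\delta\) and \(\|k\|_{L^{\infty}}\)), one obtains a pathwise Gronwall-type inequality
\[
\mathcal{H}(\rho_t^N|\rho_t^{\otimes N}) \le C_1(\|k\|_{L^{\infty}},\delta)\int_0^t \mathcal{H}(\rho_s^N|\rho_s^{\otimes N})\, \Id s + C_2(T,\|k\|_{L^{\infty}},\delta), \qquad \P\text{-a.s.}
\]
Applying Gronwall's lemma yields a deterministic bound, uniform in \(t \in [0,T]\). Since the right-hand side is monotone in \(t\) and independent of \(\omega\), taking the supremum in \(t\) and then the expectation immediately gives the claimed estimate.

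The main obstacle is the careful verification that Jabin--Wang's Theorem~3 applies in the conditional setting: one must identify the precise cancellation structure \(\phi_s\) inherits from \(k * \rho_s\) pointwise in \(\omega\), and confirm that the resulting exponential moment bound is uniform in \(\omega\) (which reduces to the deterministic statement once one conditions on \(\cF^W\), since \(\rho_s(\omega,\cdot)\) is then a fixed probability density). A secondary but notable point is that, unlike the idiosyncratic-only case, the covariation terms arising from the common noise must vanish at the relative-entropy level; this has already been accomplished in the lead-up to Corollary~\ref{cor: entropy_estimate} via the ``perfect square'' identity~\eqref{eq: convariation_perfect_square}, so no further delicate cancellations are needed at this stage.
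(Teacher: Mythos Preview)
Your proposal is correct and follows essentially the same route as the paper: both apply Corollary~\ref{cor: entropy_estimate}, invoke the Donsker--Varadhan change-of-measure inequality (this is \cite[Lemma~1]{JabinWang2018}), use the cancellation \(\int \psi(s,z,y)\rho_s(y)\,\Id y = 0\) pathwise in \(\omega\), and then appeal to \cite[Theorem~3]{JabinWang2018} for the uniform exponential-moment bound before closing with Gronwall. The only cosmetic difference is that you apply Gronwall pathwise and then take \(\E\sup_t\), whereas the paper first takes expectation, applies Gronwall to bound \(\sup_t \E(\mathcal{H})\), and then feeds this back into \eqref{eq: enviromental_relative_entropy_bound_aux1} to upgrade to \(\E\sup_t(\mathcal{H})\); your ordering is arguably cleaner since the Gronwall output is already deterministic.
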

\begin{remark}
    The results demonstrate that in the smooth case, the relative entropy estimates presented by Jabin and Wang~\cite{JabinWang2018} hold even in the presence of common noise. However, if our coefficients and interaction kernel are not smooth, we need to resort to an approximation argument. As discussed in Section~\ref{sec: stability_spde}, this becomes achievable when \(k\in L^2(\R^d) \cap L^\infty(\R^d) \). The challenging aspect lies in handling the non-linear SPDE~\ref{eq: d_dim_spde}. Consequently, we have reformulated the inquiry about a quantitative relative entropy estimate with common noise into a question concerning the stability of non-linear SPDEs. It is foreseeable that other results derived from the work of Jabin and Wang~\cite{JabinWang2018} can be adapted to the common noise setting by similar techniques.  
\end{remark}
\begin{proof}[Proof of Theorem\ref{theorem: main_estimate_smooth_setting}]
The proof is basically an application of~\cite[Theorem 3]{JabinWang2018}. Let us define the following process
\begin{equation*}
\psi(s,z,y) = \frac{1}{16 e \norm{k}_{L^\infty}} (k(z-y)-k*\rho_s(z))
\end{equation*}
and notice that \(\norm{\psi}_{L^\infty(\R^d)}
\le \frac{1}{2e}\) uniformly in time and the probability space \(\Omega\).
Then applying Corollary~\ref{cor: entropy_estimate} and~\cite[Lemma 1]{JabinWang2018} we find 
\begin{align*}
&\;  \E \bigg(\sup\limits_{0 \le t \le T } \mathcal{H}(\rho_t^{N}  \vert \rho_t^{\otimes N}) \bigg) \\
\le &\; \frac{1}{\delta} \sum\limits_{i=1}^N  \int\limits_0^T \E\bigg(  \rho_s^N(x)\bigg| \frac{1}{N} \sum\limits_{j=1}^N k(x_{i}-x_j)  - (k*\rho_s)(x_{i})\bigg|^2  \Id x \bigg) \Id s \\
\le&\;   \frac{16e \norm{k}_{L^\infty(\R^d)} }{\delta} \int\limits_0^T \E\bigg(  \mathcal{H}(\rho_s^{N}  \vert \rho_s^{\otimes N})  \bigg)  \\
&+  \frac{1}{N} \sum\limits_{i=1}^N \E\bigg( \log\bigg( \int_{\R^{dN} } \rho^{\otimes N}_s(x) \exp\bigg( \frac{1}{N} \sum\limits_{j_{1},j_{2}=1}^N  \psi(s,x_{i},x_{j_1}) \psi(s,x_{i},x_{j_2}) \bigg) \Id x \bigg) \bigg) \Id s. 
\end{align*}
Additionally, we have the following cancellation property for all \(s \in [0,T]\), 
\begin{equation*}
\int_{\R^d} \psi(s,z,y) \rho_s(y) = 0 , \quad \P\text{-a.e.}.   
\end{equation*} 
At this moment we can repeat the proof of~\cite[Theorem 3]{JabinWang2018}, since it is based on the cancellation property and combinatorial arguments. Hence, it can be performed path-wise and we arrive at 
\begin{equation} \label{eq: enviromental_relative_entropy_bound_aux1}
 \E\bigg( \sup\limits_{0 \le t \le T} \mathcal{H}(\rho_t^{N}  \vert \rho_t^{\otimes N}) \bigg)
 \le   \frac{16e \norm{k}_{L^\infty(\R^d)} }{\delta} 
 \int\limits_0^t \E\bigg(  \mathcal{H}(\rho_s^{N}  \vert \rho_s^{\otimes N})  \bigg)  
 + C  \Id s . 
\end{equation}
 Finally, Gronwall's lemma implies
\begin{equation*}
\sup\limits_{0 \le t \le T} \E(\mathcal{H}(\rho_t^{N}  \vert \rho_t^{\otimes N})) 
 \le   \frac{16e \norm{k}_{L^\infty(\R^d)  }T e^{CT} }{\delta} 
\end{equation*}
for some positive constant \(C>0\). Utilizing this estimate in~\eqref{eq: enviromental_relative_entropy_bound_aux1} we arrive at 
\begin{equation}
\E\bigg( \sup\limits_{0 \le t \le T} \mathcal{H}(\rho_t^{N}  \vert \rho_t^{\otimes N}) \bigg)
\le \frac{16e \norm{k}_{L^\infty(\R^d)}  T^2 e^{CT}} {\delta}. 
\end{equation}
\end{proof}

An application of the Csiszar--Kullback--Pinsker inequality and the sub-additivity property proves the following \(L^1\) estimate. 
\begin{corollary}
In the setting of Theorem~\ref{theorem: main_estimate_smooth_setting} we obtain 
\begin{equation*}
\E \bigg( \sup\limits_{0 \le t \le T} \norm{\rho^{r,N}_t-\rho_t^{ \otimes r}}_{L^1(\R^{dr})}^2 \bigg) \le \frac{C}{N}. 
\end{equation*}

\end{corollary}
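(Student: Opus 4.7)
The plan is to combine the Csiszar--Kullback--Pinsker inequality~\eqref{eq: CKP_inequality} with the subadditivity of relative entropy, both applied $\P$-almost surely to the smooth conditional densities $\rho_t^N$ and $\rho_t^{\otimes N}$ supplied by Proposition~\ref{prop: solution_ito_proces_rep}, and then to invoke Theorem~\ref{theorem: main_estimate_smooth_setting}. First I would observe that by the exchangeability of the interacting particle system~\eqref{eq: particle_system}, the conditional density $\rho_t^N$ is symmetric in its $N$ arguments $\P$-a.s., so the $r$-marginal $\rho_t^{r,N}$ is itself a symmetric probability density on $\R^{dr}$, and the comparison measure $\rho_t^{\otimes r}$ is by construction a tensor product. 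Hence the two structural hypotheses needed to apply the subadditivity bound~\eqref{eq: CKP_inequality} (and its counterpart from~\cite[Proposition~19]{Wang2017}) are met pointwise in $\omega$.

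Next I would chain the two inequalities: on the event of full probability where both densities are smooth, CKP gives
\begin{equation*}
\norm{\rho_t^{r,N} - \rho_t^{\otimes r}}_{L^1(\R^{dr})}^2 \le 2\,\mathcal{H}(\rho_t^{r,N} \vert \rho_t^{\otimes r}),
\end{equation*}
while subadditivity (applicable precisely because $\rho_t^{\otimes N}$ is a tensor product) yields
\begin{equation*}
\mathcal{H}(\rho_t^{r,N} \vert \rho_t^{\otimes r}) \le \frac{r}{N}\,\mathcal{H}(\rho_t^{N} \vert \rho_t^{\otimes N}).
\end{equation*}
Composing these two bounds, taking the supremum over $t \in [0,T]$ and then the expectation, one arrives at
\begin{equation*}
\E\bigg(\sup_{0 \le t \le T} \norm{\rho_t^{r,N} - \rho_t^{\otimes r}}_{L^1(\R^{dr})}^2\bigg) \le \frac{2r}{N}\,\E\bigg(\sup_{0 \le t \le T} \mathcal{H}(\rho_t^{N} \vert \rho_t^{\otimes N})\bigg),
\end{equation*}
and the right-hand side is $\le C r/N$ by Theorem~\ref{theorem: main_estimate_smooth_setting}. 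Since $r$ is fixed, this is absorbed into the generic constant and yields the claim.

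There is essentially no hard step here; the only point that deserves a brief check is that the pointwise manipulations are legitimate, namely that $\rho_t^{\otimes N}$ is strictly positive on a full measure set so that the relative entropy and the Radon--Nikodym derivative $\rho_t^{r,N}/\rho_t^{\otimes r}$ are well defined $\P$-a.s. This is immediate in the smooth regime of Proposition~\ref{prop: solution_ito_proces_rep}: $\rho_t^{\otimes N}$ is the tensor product of the smooth, non-negative, mass-conserving solution $\rho_t$ of~\eqref{eq: d_dim_spde}, whose strict positivity follows from the uniform parabolicity furnished by Assumption~\ref{ass: diffusion_coef}~\eqref{item: ellipticity_idiosy} together with the representation as a (conditional) law of the McKean--Vlasov SDE~\eqref{eq: conditional_mckean_vlasov}. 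Once this is granted, the whole argument reduces to the two inequalities cited above.
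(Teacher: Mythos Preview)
Your argument is correct and follows exactly the route indicated in the paper: apply the Csiszar--Kullback--Pinsker inequality and the subadditivity of relative entropy pathwise, take the supremum in time and the expectation, and conclude via Theorem~\ref{theorem: main_estimate_smooth_setting}. The additional remarks on symmetry of $\rho_t^N$ and strict positivity of $\rho_t^{\otimes N}$ are appropriate checks that make the pointwise application of both inequalities legitimate.
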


\begin{remark}
We want to end this section with a small comment how to make the calculations completely rigours. 
Basically we need to avoid the singularity of the \(\log\) function. We accomplish that by replacing the function \(x\log(y)\) with \((x+\epsilon)\log(y+\epsilon)\). This guarantees the application of the It\^{o}'s formula and the vanishing of the stochastic integrals in all calculations. Obviously we can not longer integrate over the whole space \(\R^{dN}\). Therefore a multiplication with a suitable cut-off function depending on some parameter \(\tilde{\epsilon}\) and than integrating over \(\R^{dN}\) does the job. Finally we apply all estimates to the approximating system. Since all estimates will be uniform in the parameters \(\epsilon,\tilde{\epsilon}\) everything will be well-defined and we can take the limit by connecting \(\tilde{\epsilon}\) with \(\epsilon\) such that all appearing approximation terms vanish. Normally \(\epsilon\) needs to vanish much faster than \(\tilde{\epsilon}\), i.e. \(\epsilon = \mathcal{O}( \tilde{\epsilon}^L)\) for big enough \(L>0\) as \(\tilde{\epsilon} \to 0\). 
\end{remark}

\section{Stability for the stochastic PDE's} \label{sec: stability_spde}
The goal of this section is to lower the smoothness assumptions made in Section~\ref{sec: relative_entropy_smooth_setting} and obtain the estimate in Theorem~\ref{theorem: main_estimate_smooth_setting} in the case \(k \in L^2(\R^d) \cap L^\infty(\R^d) \). Our strategy consists of mollifying the coefficients, applying Theorem~\ref{theorem: main_estimate_smooth_setting} and then use the almost everywhere convergence along a subsequence to conclude the relative entropy estimate. 
Hence, let us consider a non-negative smooth function \(J^1: \R^d \to \R\) with compact support in the unit ball and mass one. We replace the coefficients \(\sigma, \nu, k\) by its mollified versions. More precisely, we define for each index \(\alpha , \beta, \tilde{l}\) and \(\epsilon > 0\) the functions 
\begin{align*}
[\sigma_t(\cdot)\sigma_t^{\mathrm{T}}(\cdot)]_{(\alpha,\beta)} * J^\epsilon(x) &= \int_{\R^d} [\sigma_t(y)\sigma_t^{\mathrm{T}}(y)]_{(\alpha,\beta)} J^\epsilon(x-y) \Id y, \\
\nu_t^{\beta,\tilde{l}} * J^\epsilon(x) &= \int_{\R^d} \nu_t^{\beta,\tilde{l}}(y) J^\epsilon(x-y) \Id y,\\ 
k * J^\epsilon(x) &= \int_{\R^d} k(y)J^\epsilon(x-y) \Id y ,\\
\rho_0 * J^\epsilon(x) &= \int_{\R^d} \rho_0(y)J^\epsilon(x-y) \Id y,
\end{align*}
where \(J^\epsilon(x) = \frac{1}{\epsilon^d} J^1\big( \frac{x}{\epsilon} \big) \). 
With abuse of notation, let us denote the mollified coefficients by \(k^\epsilon,[\sigma_t(x)\sigma_t^{\mathrm{T}}(x)]_{(\alpha,\beta)}^\epsilon\), \([\nu_t(x)\nu_t^{\mathrm{T}}(x)]_{(\alpha,\beta)} ^\epsilon\) and the mollified initial data by \(\rho^\epsilon_0\). Notice, that the mollified functions also satisfy the regularity estimates in Assumption~\ref{ass: diffusion_coef} uniformly in \(\epsilon\). 
Additionally, by the properties of mollifiers we have \([\sigma_t(x)\sigma_t^{\mathrm{T}}(x)]_{(\alpha,\beta)}^\epsilon\), \([\nu_t(x)\nu_t^{\mathrm{T}}(x)]_{(\alpha,\beta)} ^\epsilon\) converging to \([\sigma_t(x)\sigma_t^{\mathrm{T}}(x)]_{(\alpha,\beta)}\), \([\nu_t(x)\nu_t^{\mathrm{T}}(x)]_{(\alpha,\beta)}\) uniformly on compact sets of \(\R^d\) and \(k^\epsilon,\rho^\epsilon\) converge in the \(L^2\)-norm towards \(k,\rho_0\). 

Let \(\rho^{N,\epsilon}\), \(\rho^{\epsilon}\) be the solution to~\eqref{eq: liouville_SPDE},~\eqref{eq: d_dim_spde} with the mollified coefficients. 
In the next step we demonstrate that the mollified solutions \(\rho^{N,\epsilon}\), \(\rho^{\epsilon}\) converge to \(\rho^{N}\), \(\rho\). 

\begin{lemma} \label{lemma: conv_moll_N_spde}
Fix \(N\in \N\) and let \(k\in L^2(\R^d) \cap L^\infty(\R^d) \). Then we have the following convergence between \(\rho^{N,\epsilon}\) and \(\rho^N\), 
\begin{equation*}
\lim\limits_{\epsilon \to 0 } \norm{\rho^{N, \epsilon}-\rho^N}_{L^2_{\cF^W}([0,T];H^1(\R^{d}))} = 0 .
\end{equation*}
\end{lemma}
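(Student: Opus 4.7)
The plan is to derive an SPDE satisfied by the difference $\eta^\epsilon_t := \rho^{N,\epsilon}_t - \rho^N_t$, apply It{\^o}'s formula to $\norm{\eta^\epsilon_t}_{L^2(\R^{dN})}^2$, and close the estimate by Gronwall after absorbing a quadratic variation term into the dissipation coming from the ellipticity of $\sigma$. This is the linear analogue of the Picard argument in Proposition~\ref{prop: existence_d_spde}, and since the Liouville equation~\eqref{eq: liouville_SPDE} is linear in $\rho^N$, no fixed-point iteration is needed.

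First I would write the equation for $\eta^\epsilon$ by splitting each coefficient as $k^\epsilon = k + (k^\epsilon-k)$, and similarly for $[\sigma\sigma^T]_{(\alpha,\beta)}$ and $[\nu\nu^T]_{(\alpha,\beta)}$, which produces a linear SPDE of the same form as~\eqref{eq: liouville_SPDE} for $\eta^\epsilon$ with an explicit source term involving the differences $(k^\epsilon-k)$, $([\sigma\sigma^T]^\epsilon-[\sigma\sigma^T])$, $([\nu\nu^T]^\epsilon-[\nu\nu^T])$, and $(\nu^\epsilon-\nu)$, multiplied by $\rho^{N,\epsilon}$ or its derivatives. Applying It{\^o}'s formula~\cite{krylov2010ito} to $\norm{\eta^\epsilon_t}_{L^2(\R^{dN})}^2$, the stochastic integral vanishes by Assumption~\ref{ass: diffusion_coef}~\eqref{item: divergence_free_common_noise} exactly as in Lemma~\ref{lem: priori_l2} and Proposition~\ref{prop: existence_d_spde}. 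Then the same cancellation between the It{\^o} correction from the $\nu$-stochastic integral and the $\nu$-part of the second-order operator (see~\eqref{eq: d_spde_prood_aux1}) applies, and the remaining $\sigma$-contribution is controlled from below by $\delta\norm{\nabla\eta^\epsilon}_{L^2(\R^{dN})}^2$ by ellipticity.

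The error terms are then handled separately. For the drift we use $\norm{k^\epsilon-k}_{L^2(\R^d)}\to 0$ together with the uniform a priori bound $\norm{\rho^{N,\epsilon}}_{L^2_{\cF^W}([0,T];H^1(\R^{dN}))}\le C$ of Proposition~\ref{prop: existence_d_spde} to estimate, after Cauchy--Schwarz and Young's inequality,
\begin{equation*}
\sum_{i=1}^N\int_0^t\!\!\int \eta^\epsilon\,\nabla_{x_i}\!\cdot\!\Big(\tfrac{1}{N}\sum_j(k^\epsilon-k)(x_i-x_j)\rho^{N,\epsilon}\Big)\,\d x\,\d s \le \tfrac{\delta}{4}\int_0^t\norm{\nabla\eta^\epsilon_s}_{L^2}^2\,\d s + C\norm{k^\epsilon-k}_{L^2(\R^d)}^2,
\end{equation*}
where the gradient term is absorbed by the dissipation. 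For the diffusion coefficient differences I would exploit Assumption~\ref{ass: diffusion_coef} (uniform $C^1$ bounds) together with the uniform-on-compacts convergence of the mollifications and the moment bound from Lemma~\ref{lem: moment_estimate_spde}: on a large ball $B_R\subset\R^{dN}$ the sup-norm of the coefficient error is $o(1)$, while on the complement the contribution is controlled by the second moment of $\rho^{N,\epsilon}$ times $R^{-2}$, giving a total error that can be made arbitrarily small by choosing $R=R(\epsilon)\to\infty$ slowly enough. Finally, the initial-data error $\norm{(\rho_0^\epsilon)^{\otimes N}-\rho_0^{\otimes N}}_{L^2(\R^{dN})}$ is handled by a telescoping decomposition together with $\rho_0^\epsilon\to\rho_0$ in $L^2(\R^d)$ and the uniform $L^2$ bound. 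Taking expectations and applying Gronwall's lemma yields $\E\norm{\eta^\epsilon_t}_{L^2(\R^{dN})}^2\to 0$ together with $\E\int_0^T\norm{\nabla\eta^\epsilon_s}_{L^2}^2\,\d s\to 0$, which is the desired convergence.

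The main obstacle is controlling the diffusion coefficient errors on the whole space $\R^{dN}$: mollification only gives uniform convergence on compact sets, so one genuinely needs the second moment bound of Lemma~\ref{lem: moment_estimate_spde} to cut off the tails, and some care is required because this bound is in expectation while the coefficient errors enter pathwise inside the It{\^o} formula. A clean way to proceed is to first obtain the estimate in $\E\norm{\cdot}_{L^2}^2$ (where Fubini allows trading pathwise bounds for expectation bounds), and to rely on the dissipation from $\sigma$ to absorb any $\nabla\eta^\epsilon$ factor produced by the integration by parts against the coefficient error.
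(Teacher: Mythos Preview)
Your approach is essentially correct but takes a considerably more laborious route than the paper. Since the Liouville equation~\eqref{eq: liouville_SPDE} is \emph{linear} in $\rho^N$, the paper does not redo an energy estimate at all: it simply invokes Krylov's stability theorem \cite[Theorem~5.7]{Krylov1999AnAA} and checks its hypotheses---convergence of the second-order and noise coefficients in $L^\infty$, of the initial data in $L^2$, and of the free term $f^\epsilon(\cdot,u)\to f(\cdot,u)$ in $L^2_{\cF^W}([0,T];H^{-1}(\R^{dN}))$ for each fixed $u$. Your by-hand It{\^o}/Gronwall computation amounts to reproving that theorem in this particular case.

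Two specific points in your plan need adjustment. First, the tail-cutting for the diffusion coefficients is unnecessary: under Assumption~\ref{ass: diffusion_coef} the entries of $\sigma\sigma^{\mathrm T}$ and $\nu$ are globally Lipschitz, so their mollifications converge \emph{uniformly on all of $\R^d$}, not merely on compacts. This is what the paper uses, and it avoids invoking Lemma~\ref{lem: moment_estimate_spde}, which is stated and proved only for the $d$-dimensional equation, not for $\rho^{N,\epsilon}$ on $\R^{dN}$; you would otherwise have to reprove that moment bound in the $dN$-dimensional setting. Second, your claimed drift-error bound $C\norm{k^\epsilon-k}_{L^2(\R^d)}^2$ is not obviously justified on $\R^{dN}$ (there is no tensor structure in $\rho^{N,\epsilon}$ to exploit). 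The paper instead uses dominated convergence: along a subsequence $k^\epsilon\to k$ a.e., and for fixed $u$ the pointwise majorant $|(k^\epsilon-k)(x_i-x_j)u|^2\le 4\norm{k}_{L^\infty}^2|u|^2$ is $\epsilon$-independent. With these two fixes your argument goes through, but the paper's one-line appeal to \cite[Theorem~5.7]{Krylov1999AnAA} is the cleaner path.
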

\begin{proof}
We need to verify the stability assumptions of~\cite[Theorem 5.7]{Krylov1999AnAA}. 
Since the coefficients \(\sigma,\nu\) are continuous uniform in time, the mollified versions convergence in the supremums norm. Moreover, by the properties of mollification 
\begin{equation}\label{eq: mollified_initial_value_convergence}
    \lim\limits_{\epsilon \to 0}\norm{\rho_0^\epsilon -\rho_0}_{L^2(\R^d)} = 0 .
    \end{equation}
    Hence, we only need to take care of the drift term. 
Define 
\begin{equation*}
f(x,u)  = \sum\limits_{i=1}^N  \nabla_{x_i} \cdot \bigg( \frac{1}{N} \sum\limits_{j=1}^N k(x_i-x_j) u\bigg), \quad f^\epsilon(x,u) = \sum\limits_{i=1}^N  \nabla_{x_i} \cdot \bigg( \frac{1}{N} \sum\limits_{j=1}^N k^\epsilon(x_i-x_j) u \bigg)
\end{equation*}
for \(u\in L^2_{\cF^W}([0,T];H^{1}(\R^{d}))\) and observe that 
\begin{equation*}
\norm{f^\epsilon-f}_{L^2_{\cF^W}([0,T];H^{-1}(\R^{d}))} 
\le \frac{1}{N} \sum\limits_{i,j=1}^N \norm{(k^\epsilon(x_i-x_j) -k(x_i-x_j)) u}_{L^2_{\cF^W}([0,T];L^2(\R^{d}))}.
\end{equation*}
By the properties of mollification we can extract a subsequence, which we do not rename, such that \(\lim\limits_{\epsilon \to 0} k^\epsilon = k  \;  \text{a.e.}\) and \[|(k^\epsilon(x_i-x_j) -k(x_i-x_j)) u|^2 \le 2 \norm{k}_{L^\infty(\R^d)}^2 |u|^2.\] Hence, by the dominated convergence theorem we obtain 
\begin{equation*}
\lim\limits_{\epsilon \to 0} \norm{f^\epsilon-f}_{L^2_{\cF^W}([0,T];H^{-1}(\R^{d}))}  = 0
\end{equation*}
and we can apply~\cite[Theorem 5.7.]{Krylov1999AnAA}. 
\end{proof}

\begin{lemma} \label{lemma: convergence_moll_chaotic_spde}
Let \(k\in L^2(\R^d) \cap L^\infty(\R^d) \). Then, there exists a sequence of smooth solutions we have the following convergence between \(\rho^{\epsilon}\) and \(\rho\), 
\begin{equation*}
\lim\limits_{\epsilon \to 0 } \norm{\rho^{ \epsilon}-\rho}_{L^2_{\cF^W}([0,T];H^1(\R^{d}))} = 0 .
\end{equation*}
\end{lemma}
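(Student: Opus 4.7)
The plan is to mimic the stability calculation at the end of the proof of Proposition~\ref{prop: existence_d_spde}, now comparing $\rho^\epsilon$ with $\rho$ instead of two consecutive Picard iterates. Since mollification commutes with differentiation, the divergence-free property of $\nu$ and the cancellation conditions in Assumption~\ref{ass: diffusion_coef}~\eqref{item: divergence_free_common_noise}--\eqref{item: cancelation_sigma_nu} are inherited by $\nu^\epsilon$ and $\sigma^\epsilon(\sigma^\epsilon)^{\mathrm{T}}$, and the ellipticity constant $\delta$ is preserved, so Lemma~\ref{lem: priori_l2} applies uniformly in $\epsilon$ and yields a uniform $L^2(\R^d)$-bound on $\rho^\epsilon$. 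Subtract the two SPDE's and decompose each nonlinear term into a part acting on $\rho^\epsilon-\rho$ and a source part acting on $\rho$. For the drift,
\[
(k^\epsilon*\rho^\epsilon)\rho^\epsilon-(k*\rho)\rho=(k^\epsilon*\rho^\epsilon)(\rho^\epsilon-\rho)+(k^\epsilon*(\rho^\epsilon-\rho))\rho+((k^\epsilon-k)*\rho)\rho,
\]
and analogously the noise splits as $\nu^\epsilon(\rho^\epsilon-\rho)+(\nu^\epsilon-\nu)\rho$ and the second-order operator into a part with mollified coefficients acting on $\rho^\epsilon-\rho$ plus a coefficient-difference $(\sigma^\epsilon(\sigma^\epsilon)^{\mathrm{T}}-\sigma\sigma^{\mathrm{T}})+(\nu^\epsilon(\nu^\epsilon)^{\mathrm{T}}-\nu\nu^{\mathrm{T}})$ acting on $\rho$.

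Applying It{\^o}'s formula to $\norm{\rho^\epsilon_t-\rho_t}_{L^2(\R^d)}^2$ and repeating the computation of Proposition~\ref{prop: existence_d_spde}, the stochastic integral associated with $\nu^\epsilon(\rho^\epsilon-\rho)$ vanishes by the divergence-free integration by parts, and its quadratic-variation contribution cancels the $\nu^\epsilon(\nu^\epsilon)^{\mathrm{T}}$-piece of the second-order operator. The ellipticity of $\sigma^\epsilon$ produces a negative gradient term $-\delta\norm{\nabla(\rho^\epsilon_s-\rho_s)}_{L^2(\R^d)}^2$. The drift pieces $(k^\epsilon*\rho^\epsilon)(\rho^\epsilon-\rho)$ and $(k^\epsilon*(\rho^\epsilon-\rho))\rho$ are absorbed by Young's inequality into half of this good term plus $C\norm{\rho^\epsilon_s-\rho_s}_{L^2(\R^d)}^2$, using $\norm{k^\epsilon}_{L^\infty}\le\norm{k}_{L^\infty}$, $\norm{k^\epsilon}_{L^2}\le\norm{k}_{L^2}$ and the uniform $L^2$-bound on $\rho$. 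After taking expectations, the remaining martingale vanishes and one obtains a Gronwall inequality
\[
\E\norm{\rho^\epsilon_t-\rho_t}_{L^2(\R^d)}^2+\frac{\delta}{2}\,\E\!\int_0^t\!\norm{\nabla(\rho^\epsilon_s-\rho_s)}_{L^2(\R^d)}^2\,\Id s\le C\!\int_0^t\!\E\norm{\rho^\epsilon_s-\rho_s}_{L^2(\R^d)}^2\,\Id s+R(\epsilon),
\]
where $R(\epsilon)$ collects all source contributions together with $\norm{\rho_0^\epsilon-\rho_0}_{L^2(\R^d)}^2$. Once $R(\epsilon)\to0$, Gronwall gives convergence in $L^2_{\cF^W}([0,T];L^2(\R^d))$; keeping the gradient term on the left upgrades this to convergence in $L^2_{\cF^W}([0,T];H^1(\R^d))$.

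The main obstacle is verifying $R(\epsilon)\to 0$. The kernel source $((k^\epsilon-k)*\rho)\rho$, after integration by parts and Young's inequality, is dominated by $\int_0^T\norm{(k^\epsilon-k)*\rho_s}_{L^\infty(\R^d)}^2\norm{\rho^\epsilon_s}_{L^2(\R^d)}^2\,\Id s$, which vanishes via $\norm{(k^\epsilon-k)*\rho_s}_{L^\infty}\le\norm{k^\epsilon-k}_{L^2}\norm{\rho_s}_{L^2}$ together with $\norm{k^\epsilon-k}_{L^2}\to 0$. The second-order sources and the It{\^o} correction from the stochastic cross-term $(\nu^\epsilon-\nu)\rho$ reduce after integration by parts to expressions such as
\[
\sum_{\alpha,\beta}\E\int_0^T\!\!\int_{\R^d}\big|((\sigma_s\sigma_s^{\mathrm{T}})^\epsilon-\sigma_s\sigma_s^{\mathrm{T}})_{(\alpha,\beta)}\,\partial_{z_\beta}\rho_s\big|^2\,\Id z\,\Id s,
\]
and its $\nu$-counterpart. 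Here the coefficient differences are only bounded pointwise and converge locally uniformly, so one cannot extract a small $L^\infty$ factor; the estimate must be closed by dominated convergence, using that the integrand is dominated by $4\norm{\sigma\sigma^{\mathrm{T}}}_\infty^2|\partial_{z_\beta}\rho_s|^2$, which belongs to $L^1(\Omega\times[0,T]\times\R^d)$ because $\rho\in L^2_{\cF^W}([0,T];H^1(\R^d))$ by Proposition~\ref{prop: existence_d_spde}, and converges pointwise to zero. This dependence on the $H^1$-regularity of $\rho$ is the only substantive new ingredient; everything else is a line-by-line repetition of the existence argument.
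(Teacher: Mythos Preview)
Your argument is correct and more direct than the paper's. The paper does not compare $\rho^\epsilon$ with $\rho$ head-on but routes through the Picard iterates $\rho^{n,\epsilon},\rho^n$ of Proposition~\ref{prop: existence_d_spde} via a triangle inequality, and then controls $\norm{\rho^{n,\epsilon}-\rho^n}_{L^2}$ by a Gronwall estimate iterated in $n$. For the coefficient-difference source terms the paper does not use dominated convergence: it splits $\R^d$ into $B(0,R)$ and its complement, exploits local uniform convergence of the mollified coefficients on the ball, and invokes the second-moment bound of Lemma~\ref{lem: moment_estimate_spde} together with $L^p$-interpolation and Sobolev embedding to control the tail; the final upgrade from $L^2$ to $H^1$ comes separately via Krylov's linear a~priori estimate. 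Your route, by contrast, bypasses both the Picard detour and the moment lemma by closing everything through dominated convergence, the domination being furnished by $|\nabla\rho|^2\in L^1(\Omega\times[0,T]\times\R^d)$, which is available precisely because of the $H^1$-regularity in Proposition~\ref{prop: existence_d_spde}. This is cleaner and shorter, at the price of being non-quantitative in $\epsilon$.

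One minor correction: the cancellation condition~\eqref{item: cancelation_sigma_nu} is inherited by $[\sigma\sigma^{\mathrm T}]^\epsilon$ because the paper mollifies the product $\sigma\sigma^{\mathrm T}$ directly, but it is \emph{not} automatically inherited by $\nu^\epsilon(\nu^\epsilon)^{\mathrm T}$, since mollification does not commute with products. This does not harm your argument---the resulting lower-order term $\bigl(\partial_{z_\beta}[\nu^\epsilon(\nu^\epsilon)^{\mathrm T}-\nu\nu^{\mathrm T}]_{(\alpha,\beta)}\bigr)\rho_s$ is still bounded by $C|\rho_s|\in L^2$ and converges pointwise to zero, so dominated convergence still applies---but the inheritance claim as stated is not quite right.
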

\begin{proof}
The proof is based on the evolution of the \(L^2\)-norm similar to Proposition~\ref{prop: existence_d_spde}. 
By equation~\eqref{eq: picard_iteration_conv} we know that the Picard iteration \((\rho^n, n \in \N)\) convergence. Hence, it is sufficient to find a smooth approximation for \(\rho^{n,\epsilon}\) for each fixed \(n \in \N\). Consequently, let \(\rho^{n,\epsilon}\) be the solution of the Picard iteration with mollified coefficients
\begin{align*}
\Id \rho^{n,\epsilon}_t
=& \;  \nabla \cdot (( k^{\epsilon}*\rho^{n-1,\epsilon}_t) \rho^{n,\epsilon}_t ))  \Id t 
- \nabla \cdot (\nu_t^\epsilon  \rho^{n,\epsilon} \Id W_t)  \\
&\; + \frac{1}{2}  \sum\limits_{\alpha,\beta=1}^d \partial_{x_{i,\alpha}}\partial_{x_{i,\beta}} \bigg( ( [\sigma_t \sigma_t^{\mathrm{T}}]_{(\alpha,\beta)}^\epsilon + [\nu_t^\epsilon (\nu_t^\epsilon)^{\mathrm{T}}]_{(\alpha,\beta)} ) \rho^{n,\epsilon}_t \bigg)  \Id t. 
\end{align*}

Then, 
\begin{align} \label{eq: limiting_aux1}
&\; \norm{\rho^{ \epsilon}-\rho}_{L^2_{\cF^W}([0,T];L^2(\R^{d}))}  \nonumber  \\
\le &\;  \norm{\rho^{n}-\rho}_{L^2_{\cF^W}([0,T];L^2(\R^{d}))} 
+ \norm{\rho^{ \epsilon,n}-\rho^{\epsilon}}_{L^2_{\cF^W}([0,T];L^2(\R^{d}))} 
+
\norm{\rho^{ \epsilon,n}-\rho^n}_{L^2_{\cF^W}([0,T];L^2(\R^{d}))}. 
\end{align}
The first term vanishes in the limit for \(n \to \infty\) by~\eqref{eq: picard_iteration_conv} and the second by similar arguments.
Notice that \(\rho^{\epsilon,n}-\rho^n\) solves the following SPDE 
\begin{align*}
&\; \Id (\rho^{n,\epsilon}_t-\rho^n_t) \\
=& \;  \nabla \cdot (( k^{\epsilon}*\rho^{n-1,\epsilon}_t) \rho^{n,\epsilon}_t - (k*\rho^{n-1}_t)\rho^{n}_t  ) \Id t 
- \nabla \cdot (((\nu_t^\epsilon -\nu_t)  \rho^{n} +\nu_t^{\epsilon} (  \rho^{n,\epsilon}-\rho^n_t) ) \Id W_t)  \\
& + \frac{1}{2}  \sum\limits_{\alpha,\beta=1}^d \partial_{x_{i,\alpha}}\partial_{x_{i,\beta}} \bigg( ( [\sigma_t \sigma_t^{\mathrm{T}}]_{(\alpha,\beta)}^\epsilon + [\nu_t^\epsilon (\nu_t^\epsilon)^{\mathrm{T}}]_{(\alpha,\beta)}  - [\sigma_t \sigma_t^{\mathrm{T}}]_{(\alpha,\beta)} + [\nu_t \nu_t^{\mathrm{T}}]_{(\alpha,\beta)} ) 
 \rho^{n}_t  \\
 &+ [\sigma_t \sigma_t^{\mathrm{T}}]_{(\alpha,\beta)}^\epsilon + [\nu_t^\epsilon (\nu_t^\epsilon)^{\mathrm{T}}]_{(\alpha,\beta)}  ( \rho^{n,\epsilon}_t -\rho^n_t) \bigg) \Id t,
\end{align*}
wherer the coeffients \(\sigma^\epsilon,\nu^\epsilon,\sigma,\nu\) all satisfy Assumptions~\ref{ass: diffusion_coef}. Indeed, the mollification preserves the uniform bounds and the divergence free property. By the non-negativity of the mollifier, the ellepticity  
\begin{equation*}
\sum\limits_{\alpha,\beta=1}^d      [\sigma_s \sigma_s^{\mathrm{T}}(x)]_{(\alpha,\beta)}^\epsilon    \lambda_{\alpha} \lambda_{\beta}
= \int_{\R^d} J^\epsilon(x-y) \sum\limits_{\alpha,\beta=1}^d  [\sigma_s \sigma_s^{\mathrm{T}}(x)]_{(\alpha,\beta)}\lambda_{\alpha} \lambda_{\beta} \Id y
\ge \delta |\lambda|^2   
\end{equation*}
also holds. 
Utilizing the linearity we can perform the same steps as in Proposition~\ref{prop: existence_d_spde} to obtain 
\begin{align*}
&\;  \norm{\rho^{ \epsilon,n}_t-\rho^n_t}_{L^2(\R^{d})}^2 
-\norm{\rho^{ \epsilon,n}_0-\rho^n_0}_{L^2(\R^{d})}^2  \\
\le &\;  -2 \int\limits_0^t \int_{\R^d}  (( k^\epsilon*\rho^{n-1,\epsilon}_s) \rho^{n,\epsilon}_s - (k*\rho^{n-1}_s) \rho^{n}_s )  \cdot \nabla  (\rho^{n,\epsilon}_s - \rho^{n}_s) \Id x \Id s  \\
& - \sum\limits_{\alpha,\beta=1}^d  \int\limits_0^t \int_{\R^d}  [\sigma_s \sigma_s^{\mathrm{T}}]_{(\alpha,\beta)}^\epsilon \partial_{x_{i,\beta}}  (\rho^{n,\epsilon}_s - \rho^{n}_s)  \partial_{x_{i,\alpha}} (\rho^{n,\epsilon}_s - \rho^{n}_s)  \Id x \Id s    \\
&  + \frac{\delta}{2} \int\limits_0^t  \norm{\nabla(\rho^{n,\epsilon}_s - \rho^{n}_s)}_{L^2(\R^d)}^2  \Id s +  C(d,\delta )\int\limits_0^t\norm{\rho^{n,\epsilon}_s - \rho^{n}_s}_{L^2(\R^d)}^2 \Id s  \\
&+  \sum\limits_{\alpha,\beta=1}^d   \int\limits_0^t \int_{\R^d}  ( [\sigma_s \sigma_s^{\mathrm{T}}]_{(\alpha,\beta)}^\epsilon
+ [\nu_s^\epsilon (\nu_s^\epsilon)^{\mathrm{T}}]_{(\alpha,\beta)}  - [\sigma_s \sigma_s^{\mathrm{T}}]_{(\alpha,\beta)} + [\nu_s \nu_s^{\mathrm{T}}]_{(\alpha,\beta)} ) 
 \rho^{n}_s \Id x \Id s  \\
 &+  \sum\limits_{\hat{l}=1}^{\tilde{m}}\int\limits_0^t \int_{\R^d} \bigg|\sum\limits_{\beta=1}^d \partial_{x_{i,\beta}} (\nu_s^{\beta,\hat{l},\epsilon}-\nu_s^{\beta,\hat{l}}) \rho^{n}_s \bigg|^2 \Id x\Id s  \\
 &+ 2 \sum\limits_{\hat{l}=1}^{\tilde{m}} \sum\limits_{\beta=1}^d  \int\limits_0^t  \int_{\R^d}  \rho^{n}_s(\rho^{n,\epsilon}_s-\rho_s^{n}) \partial_{x_{i,\beta}} (\nu_s^{\beta,\hat{l},\epsilon}-\nu_s^{\beta,\hat{l}} ) \Id x  \Id W_s^{\hat{l}} .
\end{align*}
At the moment we can ignore the last term, since it will vanish after taking the expectation. For the penultimate term we use Lemma~\ref{lem: moment_estimate_spde}. Let us start with the case \(d\ge 3\) and denote by \(2^* = 2d/(d-2)\) the Sobolev exponent. Then for all \(R \in \N\) we find 
\begin{align*}
&\; \E \bigg( \sum\limits_{\hat{l}=1}^{\tilde{m}}\int\limits_0^t \int_{\R^d} \bigg|\sum\limits_{\beta=1}^d \partial_{x_{i,\beta}} (\nu_s^{\beta,\hat{l},\epsilon}-\nu_s^{\beta,\hat{l}}) \rho^{n}_s \bigg|^2 \Id x\Id s \bigg) \\
\le &\;   \sum\limits_{\hat{l}=1}^{\tilde{m}}  \sum\limits_{\beta=1}^d \E \bigg(  \int\limits_0^t \int_{B(0,R)}   |(\nu_s^{\beta,\hat{l},\epsilon}-\nu_s^{\beta,\hat{l}}) \rho^{n}_s|^2 + \int_{B(0,R)^{\mathrm{c}}} |  (\nu_s^{\beta,\hat{l},\epsilon}-\nu_s^{\beta,\hat{l}}) \rho^{n}_s|^2 \Id x \Id s \bigg)\\
\le & \;  \sum\limits_{\hat{l}=1}^{\tilde{m}}  \sum\limits_{\beta=1}^d T C \big(\norm{\rho_0}_{L^2(\R^d)} \big) \norm{\nu_s^{\beta,\hat{l},\epsilon}-\nu_s^{\beta,\hat{l}}}_{C^1(B(0,R))} + C \E \bigg( \int\limits_0^t \norm{\rho_s^{n}}_{L^1(B(0,R)^{\mathrm{c}})}^{\frac{4}{d+2}} \norm{\rho_s^{n}}_{L^{2^*}(\R^d)}^{\frac{2d}{d+2}} \Id s  \bigg)  \\
\le & \;  \sum\limits_{\hat{l}=1}^{\tilde{m}}  \sum\limits_{\beta=1}^d  T C \big(\norm{\rho_0}_{L^2(\R^d)} \big) \norm{\nu_s^{\beta,\hat{l},\epsilon}-\nu_s^{\beta,\hat{l}}}_{C^1(B(0,R))}  \\
&+ \frac{C}{R^{\frac{8}{d+2}}} \E \bigg( \int\limits_0^t \norm{\rho_s^{n} |\cdot|^2}_{L^1(\R^d)}^{\frac{4}{d+2}} \norm{\rho_s^{n}}_{H^{1}(\R^d)}^{\frac{2d}{d+2}} \Id s  \bigg) \\
\le & \;  \sum\limits_{\hat{l}=1}^{\tilde{m}}  \sum\limits_{\beta=1}^d  T C \big(\norm{\rho_0}_{L^2(\R^d)} \big) \norm{\nu_s^{\beta,\hat{l},\epsilon}-\nu_s^{\beta,\hat{l}}}_{C^1(B(0,R))} \\
&+ \frac{CT^{\frac{2}{d+2}}}{R^{\frac{8}{d+2}}}  \int\limits_0^T \E \bigg(  \norm{\rho_s^{n} |\cdot|^2}_{L^1(\R^d)}^{2} \bigg)^{\frac{2}{d+2}} \norm{\rho_s^{n}}_{{L^2_{\cF^W}([0,T];H^{1}(\R^{d}))}}^{\frac{2d}{d+2}} \\
\le & \;  \sum\limits_{\hat{l}=1}^{\tilde{m}}  \sum\limits_{\beta=1}^d  T C \big(\norm{\rho_0}_{L^2(\R^d)} \big) \norm{\nu_s^{\beta,\hat{l},\epsilon}-\nu_s^{\beta,\hat{l}}}_{C^1(B(0,R))} + C(\rho_0,T) R^{-\frac{8}{d+2}}, 
\end{align*}
where we used the interpolation inequality for \(L^p\) spaces and Assumption~\ref{ass: diffusion_coef}~\eqref{item: regularity_common_noise} in the third step, the Sobolev embedding in the fourth step and finally inequality~\eqref{eq: sode_h1_picard_estimate} and Lemma~\ref{lem: moment_estimate_spde} in the last step. 
In the case \(d=2\) we obtain a similar estimate by using the \(L^q\)-bound on \(\rho^n\)~\cite[Theorem 12.33]{LeoniGiovanni2017Afci} for all \(q \in [2,, \infty) \). 
Utilizing the same split of domains and applying Lemma~\ref{lem: moment_estimate_spde} we obtain 
\begin{align*}
  & \; \sum\limits_{\alpha,\beta=1}^d     \E \bigg(   \int\limits_0^t \int_{\R^d}  ( [\sigma_s \sigma_s^{\mathrm{T}}]_{(\alpha,\beta)}^\epsilon
+ [\nu_s^\epsilon (\nu_s^\epsilon)^{\mathrm{T}}]_{(\alpha,\beta)}  - [\sigma_s \sigma_s^{\mathrm{T}}]_{(\alpha,\beta)} + [\nu_s \nu_s^{\mathrm{T}}]_{(\alpha,\beta)} ) 
 \rho^{n}_s \Id x \Id s  \bigg)  \\
\le & \; T  \sum\limits_{\alpha,\beta=1}^d \bigg( \sum\limits_{\hat{l}=1}^{\tilde{m}}  \norm{\nu_s^{\beta,\hat{l},\epsilon}-\nu_s^{\beta,\hat{l}}}_{C^1(B(0,R))} +  
\norm{[\sigma_s \sigma_s^{\mathrm{T}}]_{(\alpha,\beta)}^\epsilon- [\sigma_s \sigma_s^{\mathrm{T}}]_{(\alpha,\beta)} }_{C^1(B(0,R))} \bigg )  \\
&+  C(\rho_0) T^{\frac{1}{2}} R^{-2}. 
\end{align*}
Recall that \([\sigma_s \sigma_s^{\mathrm{T}}]_{(\alpha,\beta)}^\epsilon \) is still elliptic and therefore 
\begin{equation*}
     \sum\limits_{\alpha,\beta=1}^d  \int\limits_0^t \int_{\R^d}  [\sigma_s \sigma_s^{\mathrm{T}}]_{(\alpha,\beta)}^\epsilon \partial_{x_{i,\beta}}  (\rho^{n,\epsilon}_s - \rho^{n}_s)  \partial_{x_{i,\alpha}} (\rho^{n,\epsilon}_s - \rho^{n}_s)  \Id x \Id s 
     \ge \delta  \int\limits_0^t \int_{\R^d} |\nabla ( \rho^{n,\epsilon}_s - \rho^{n}_s ) |^2 \Id x \Id s . 
\end{equation*}
Combing the last three inequalities we obtain
\begin{align*}
    &\;  \E\bigg( \norm{\rho^{ \epsilon,n}_t-\rho^n_t}_{L^2(\R^{d})}^2 \bigg) \\
    \le &\;    \E\bigg( -\frac{\delta}{2} \; \int\limits_0^t \int_{\R^d} |\nabla ( \rho^{n,\epsilon}_s - \rho^{n}_s ) |^2 \Id x \Id s +  C(\rho_0) T^{\frac{1}{2}} R^{-1} \\
    &  -2 \int\limits_0^t \int_{\R^d}  (( k^\epsilon*\rho^{n-1,\epsilon}_s) \rho^{n,\epsilon}_s - (k*\rho^{n-1}_s) \rho^{n}_s )  \cdot \nabla   (\rho^{n,\epsilon}_s - \rho^{n}_s) \Id x \Id s \\
    &+  \sum\limits_{\hat{l}=1}^{\tilde{m}}  \sum\limits_{\beta=1}^d  T C \big(\norm{\rho_0}_{L^2(\R^d)} \big) \norm{\nu_s^{\beta,\hat{l},\epsilon}-\nu_s^{\beta,\hat{l}}}_{C^1(B(0,R))} + C(\rho_0,T) R^{-\frac{4}{d+2}} \\
    &+  T  \sum\limits_{\alpha,\beta=1}^d \bigg( \sum\limits_{\hat{l}=1}^{\tilde{m}}  \norm{\nu_s^{\beta,\hat{l},\epsilon}-\nu_s^{\beta,\hat{l}}}_{C^1(B(0,R))} +  
\norm{[\sigma_s \sigma_s^{\mathrm{T}}]_{(\alpha,\beta)}^\epsilon- [\sigma_s \sigma_s^{\mathrm{T}}]_{(\alpha,\beta)} }_{C^1(B(0,R))} \bigg ) \bigg).
\end{align*}
We notice that by the mollification properties the last terms will vanish.
The only difficulty remaining is the drift term. Young's inequality implies 
\begin{align*}
     &\;  -2 \int\limits_0^t \int_{\R^d} (( k^\epsilon*\rho^{n-1,\epsilon}_s) \rho^{n,\epsilon}_s - (k*\rho^{n-1}_s) \rho^{n}_s )  \cdot \nabla   (\rho^{n,\epsilon}_s - \rho^{n}_s) \Id x \Id s \\ 
     \le &\; \int\limits_0^t \int_{\R^d}  \frac{1}{2\delta} |( k^\epsilon*\rho^{n-1,\epsilon}_s) \rho^{n,\epsilon}_s - (k*(\rho^{n-1}_s)\rho^{n}_s ) |^2
     + \frac{\delta}{2} |\nabla ( \rho^{n,\epsilon}_s - \rho^{n}_s ) |^2  \Id x \Id s.
\end{align*}
Notice, that the last term can be absorbed by the diffusion. For the first term we obtain
\begin{align*}
&\; \frac{1}{2\delta}    \int\limits_0^t \int_{\R^d}   |( k^\epsilon*\rho^{n-1,\epsilon}_s) \rho^{n,\epsilon}_s - (k*(\rho^{n-1}_s)\rho^{n}_s ) |^2 \Id x \Id s  \\
\le &\;  \frac{2}{\delta}    \int\limits_0^t \int_{\R^d}   |(k^\epsilon-k)*\rho^{n-1,\epsilon}_s) \rho^{n,\epsilon}_s|^2 +( k*(\rho^{n-1,\epsilon}_s-\rho^{n-1}_s)) \rho^{n,\epsilon}_s +  k*\rho^{n-1}_s(\rho^{n,\epsilon}_s- \rho^{n}_s ) |^2 \Id x \Id s \\
\le &\; \frac{2C(\rho_0)}{\delta} \bigg( T\norm{k^\epsilon-k}_{L^2(\R^d)}^2 + \int\limits_0^t \norm{\rho^{n-1,\epsilon}_s-\rho^{n-1}_s}_{L^2(\R^d)}^2 \Id s +  \int\limits_0^t \norm{\rho^{n,\epsilon}_s-\rho^{n}_s}_{L^2(\R^d)}^2 \Id s \bigg) ,
\end{align*}
where we used Lemma~\ref{lem: priori_l2} and Young's inequality for convolutions. Subtituting this inequality and applying Gronwall's lemma we find 
\begin{align}
    \E\bigg( \norm{\rho^{ \epsilon,n}_t-\rho^n_t}_{L^2(\R^{d})}^2 \bigg) 
    \le \bigg(A(\nu,\sigma,R,\epsilon)  + \int\limits_0^t \E\big( \norm{\rho^{n-1,\epsilon}_s-\rho^{n-1}_s}_{L^2(\R^d)}^2 \big) \Id s \bigg) e^{CT}, 
\end{align}
where
\begin{align*}
    A(\nu,\sigma,R,\epsilon) :=  &\; \sum\limits_{\hat{l}=1}^{\tilde{m}}  \sum\limits_{\beta=1}^d  T C \big(\norm{\rho_0}_{L^2(\R^d)} \big) \norm{\nu_s^{\beta,\hat{l},\epsilon}-\nu_s^{\beta,\hat{l}}}_{C^1(B(0,R))} + C(\rho_0,T) R^{-\frac{8}{d+2}} \\
    +&T  \sum\limits_{\alpha,\beta=1}^d \bigg( \sum\limits_{\hat{l}=1}^{\tilde{m}}  \norm{\nu_s^{\beta,\hat{l},\epsilon}-\nu_s^{\beta,\hat{l}}}_{C^1(B(0,R))} +  
\norm{[\sigma_s \sigma_s^{\mathrm{T}}]_{(\alpha,\beta)}^\epsilon- [\sigma_s \sigma_s^{\mathrm{T}}]_{(\alpha,\beta)} }_{C^1(B(0,R))} \bigg ) \\
&+  C(\rho_0) T^{\frac{1}{2}} R^{-2} +\frac{2 C(\rho_0) }{\delta} T \norm{k-k^\epsilon}_{L^2(\R^d)}^2 + \norm{\rho^{ \epsilon,n}_0-\rho^n_0}_{L^2(\R^{d})}^2
\end{align*}
Applying the inequality \(n \) times we arrive at 
\begin{equation*}
   \E\bigg( \norm{\rho^{ \epsilon,n}_t-\rho^n_t}_{L^2(\R^{d})}^2 \bigg) 
   \le A(\nu,\sigma,R,\epsilon)e^{CT} \sum\limits_{j=0}^{n-1}  \frac{T^je^{CTj}}{j!} 
   + \frac{ e^{CTn}}{n!}\norm{\rho_0^{\epsilon}-\rho_0}_{L^2(\R^d)}^2,
\end{equation*}
which implies 
\begin{equation*}
  \sup\limits_{n \in \N}   \E\bigg( \norm{\rho^{ \epsilon,n}_t-\rho^n_t}_{L^2(\R^{d})}^2 \bigg) 
  \le  A(\nu,\sigma,R,\epsilon) e^{CT}e^{Te^{CT}} + C \norm{\rho_0^{\epsilon}-\rho_0}_{L^2(\R^d)}^2
\end{equation*}
and finally the convergence
\begin{align*}
\limsup\limits_{\epsilon \to 0}  \sup\limits_{n \in \N} \norm{\rho^{ \epsilon,n}-\rho^n}_{L^2_{\cF^W}([0,T];L^2(\R^{d}))} =0,
\end{align*}
by taking first \(\epsilon \to 0\) in combination with~\eqref{eq: mollified_initial_value_convergence} and the properties of mollifiers, and then \(R \to \infty\). 
Together with inequality~\eqref{eq: limiting_aux1} and the subsequent comment, this implies 
\begin{equation*}
  \limsup\limits_{\epsilon \to 0}  \norm{\rho^{ \epsilon}-\rho}_{L^2_{\cF^W}([0,T];L^2(\R^{d}))} =0.   
\end{equation*}
Applying~\cite[Theorem 5.1]{Krylov1999AnAA}, the \(L^2\)-bound~\eqref{eq: uniform_L2_bound} and \(k\in L^2(\R^d)\) we arrive at 
\begin{align*}
    \limsup\limits_{\epsilon \to 0}  \norm{\rho^{ \epsilon}-\rho}_{L^2_{\cF^W}([0,T];H^1(\R^{d}))} 
    &\le \limsup\limits_{\epsilon \to 0}  \norm{k*(\rho^{ \epsilon}-\rho) \rho }_{L^2_{\cF^W}([0,T];L^2(\R^{d}))}  \\
    &\le C  \limsup\limits_{\epsilon \to 0}  \norm{k*(\rho^{ \epsilon}-\rho) }_{L^2_{\cF^W}([0,T];L^\infty(\R^{d}))} \\
    &\le C \norm{k}_{L^2(\R^d)}  \limsup\limits_{\epsilon \to 0}  \norm{\rho^{ \epsilon}-\rho  }_{L^2_{\cF^W}([0,T];L^2(\R^{d}))} \\
    &= 0 . 
\end{align*}

\end{proof}

Finally, we can present the analogous result to~\cite[Theorem 1]{JabinWang2018} for bounded kernels. 

\begin{theorem} \label{theorem: main_estimate}
Let \(k \in L^2(\R^d) \cap L^\infty(\R^d)\), \(\rho^N\) be a solution to the Liouville equation~\eqref{eq: liouville_SPDE} and \(\rho^{\otimes N} \) be a solution to the chaotic SPDE~\eqref{eq: chaotic_spde}. Then, we have the following relative entropy bound
\begin{equation}
\E \bigg(  \sup\limits_{0 \le t \le T }  \mathcal{H}(\rho_t^{N}  \vert \rho_t^{\otimes N}) \bigg) \le C 
\end{equation}
for some \(C>0\) depending on \(\norm{k}_{L^\infty(\R^d)}\). 
\end{theorem}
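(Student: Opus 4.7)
The plan is a mollification argument: replace the rough coefficients $\sigma, \nu, k$ and initial density $\rho_0$ by their mollifications $\sigma^\epsilon, \nu^\epsilon, k^\epsilon, \rho_0^\epsilon$ at scale $\epsilon > 0$, apply Theorem~\ref{theorem: main_estimate_smooth_setting} to obtain a uniform-in-$\epsilon$ relative-entropy bound for the smoothed systems, and then pass to the limit using the stability results of Lemmas~\ref{lemma: conv_moll_N_spde} and~\ref{lemma: convergence_moll_chaotic_spde} together with the lower semi-continuity of the relative entropy.

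First, for each $\epsilon$ let $\rho^{N,\epsilon}$ be the solution of the mollified Liouville equation~\eqref{eq: liouville_SPDE} and $\rho^\epsilon$ the solution of the mollified nonlinear SPDE~\eqref{eq: d_dim_spde}. Since mollification preserves the ellipticity constant $\delta$, the divergence-free and cancellation properties of Assumption~\ref{ass: diffusion_coef}, the smoothness requirement of Section~\ref{sec: relative_entropy_smooth_setting}, and gives $\|k^\epsilon\|_{L^\infty(\R^d)} \le \|k\|_{L^\infty(\R^d)}$, Theorem~\ref{theorem: main_estimate_smooth_setting} applies:
\begin{equation*}
\E\Bigl(\sup_{0 \le t \le T} \mathcal{H}(\rho_t^{N,\epsilon} \vert \rho_t^{\otimes N,\epsilon})\Bigr) \le C(T,\delta,\|k\|_{L^\infty(\R^d)}),
\end{equation*}
with $C$ uniform in $\epsilon$; here $\rho^{\otimes N,\epsilon} = (\rho^\epsilon)^{\otimes N}$ solves the mollified chaotic SPDE~\eqref{eq: chaotic_spde} by Lemma~\ref{lemma: aux_chaotic_spde}.

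Next, Lemmas~\ref{lemma: conv_moll_N_spde} and~\ref{lemma: convergence_moll_chaotic_spde} yield $\rho^{N,\epsilon} \to \rho^N$ and $\rho^\epsilon \to \rho$ in $L^2_{\cF^W}([0,T]; H^1)$, and a diagonal subsequence argument produces a common subsequence $\epsilon_k \to 0$ along which $\rho^{N,\epsilon_k} \to \rho^N$ and $\rho^{\otimes N,\epsilon_k} \to \rho^{\otimes N}$ almost everywhere on $\Omega \times [0,T] \times \R^{dN}$, using continuity of the product for the tensorised densities. Since the integrand $(f,g) \mapsto f\log(f/g) + g - f$ is non-negative (Gibbs inequality) and lower semi-continuous on $[0,\infty) \times (0,\infty)$, Fatou's lemma applied in the spatial integral produces the pointwise inequality
\begin{equation*}
\mathcal{H}(\rho_t^N \vert \rho_t^{\otimes N}) \le \liminf_{k \to \infty} \mathcal{H}(\rho_t^{N,\epsilon_k} \vert \rho_t^{\otimes N,\epsilon_k}) \quad \text{for a.e.\ } (\omega, t) \in \Omega \times [0,T].
\end{equation*}

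The main obstacle is interchanging $\sup_t$ with $\liminf_k$ in order to recover the $\sup$-in-$t$ form of the bound. Using the weak formulation of the SPDEs, the maps $t \mapsto \qv{\rho_t^N, \varphi}_{L^2(\R^{dN})}$ and $t \mapsto \qv{\rho_t^{\otimes N}, \varphi}_{L^2(\R^{dN})}$ are $\P$-almost surely continuous for every $\varphi \in \testfunctions{\R^{dN}}$, so by the Donsker--Varadhan variational representation
\begin{equation*}
\mathcal{H}(\rho_t^N \vert \rho_t^{\otimes N}) = \sup_{\varphi} \Bigl\{ \qv{\varphi, \rho_t^N} - \log \qv{e^\varphi, \rho_t^{\otimes N}} \Bigr\}
\end{equation*}
the function $t \mapsto \mathcal{H}(\rho_t^N \vert \rho_t^{\otimes N})$ is almost surely lower semi-continuous, hence its supremum over $[0,T]$ is realised on any full-measure subset of $[0,T]$. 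Combining this with the pointwise lower semi-continuity above and Fatou's lemma on $(\Omega, \P)$ yields
\begin{equation*}
\E\Bigl(\sup_{0 \le t \le T} \mathcal{H}(\rho_t^N \vert \rho_t^{\otimes N})\Bigr)
\le \liminf_{k \to \infty} \E\Bigl(\sup_{0 \le t \le T} \mathcal{H}(\rho_t^{N,\epsilon_k} \vert \rho_t^{\otimes N,\epsilon_k})\Bigr)
\le C,
\end{equation*}
which is the desired bound.
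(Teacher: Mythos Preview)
Your proof is correct and follows essentially the same approach as the paper: mollify, apply the smooth-case bound of Theorem~\ref{theorem: main_estimate_smooth_setting} with the uniform control $\|k^\epsilon\|_{L^\infty}\le \|k\|_{L^\infty}$, extract an a.e.\ convergent subsequence from the stability Lemmas~\ref{lemma: conv_moll_N_spde} and~\ref{lemma: convergence_moll_chaotic_spde}, and conclude via lower semicontinuity of the relative entropy. In fact your argument is more careful than the paper's, which does not spell out how to pass the $\sup_t$ through the $\liminf_{\epsilon}$; your use of the Donsker--Varadhan representation to get lower semicontinuity of $t\mapsto \mathcal{H}(\rho_t^N\vert\rho_t^{\otimes N})$, and hence that the supremum is attained on any full-measure set of times, fills this gap cleanly.
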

\begin{proof}
By Lemma~\ref{lemma: conv_moll_N_spde} and Lemma~\ref{lemma: convergence_moll_chaotic_spde} we know there exists a subsequence, which we do not rename such that \(\rho^{N,\epsilon}\) and \(\rho^{\otimes N, \epsilon} \) converge almost everywhere on \(\Omega \times [0,T] \times \R^{dN}\). Hence, by the lower  semicontinuity of the relative entropy, the estimate \(\norm{k^\epsilon}_{L^\infty(\R^{d})} \le C \norm{k}_{L^\infty(\R^d)} \) and Theorem~\ref{theorem: main_estimate_smooth_setting} our claim follows. 
\end{proof}
As in the smooth case, we can also use the Csiszar--Kullback--Pinsker inequality and sub-additivity property to prove \(L^1\)-convergence of every \(r\)-marginal. 
\begin{corollary}
In the setting of Theorem~\ref{theorem: main_estimate}, we obtain 
    \begin{equation*}
  \E \bigg(  \sup\limits_{0 \le t \le T}\norm{\rho^{r,N}_t-\rho_t^{\otimes r}}_{L^1(\R^{dr})}^2 \bigg) \le \frac{Cr}{N}
\end{equation*}
for every \(r \in \N\). 
\end{corollary}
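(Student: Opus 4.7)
The plan is to combine Theorem~\ref{theorem: main_estimate} with the two deterministic functional inequalities for relative entropy stated after equation~\eqref{eq: CKP_inequality}, applied pathwise in $\omega$. The key observation is that for each fixed $\omega$ (outside a null set), the objects $\rho_t^{r,N}(\omega,\cdot)$ and $\rho_t^{\otimes r}(\omega,\cdot)$ are genuine probability densities on $\R^{dr}$, so the Csiszar--Kullback--Pinsker and subadditivity inequalities apply to them as deterministic densities.

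First I would note that $\rho_t^N$ is exchangeable in its $N$ spatial arguments for almost every $\omega$. This follows from the symmetry of the particle system~\eqref{eq: particle_system} (the Brownian motions $B^i$ are i.i.d., and the initial values $\zeta^i$ are i.i.d., so the conditional law given $\cF_t^W$ is invariant under permutation of the particle indices), together with the uniqueness of the solution to the Liouville SPDE~\eqref{eq: liouville_SPDE}. Consequently the $r$-marginal $\rho_t^{r,N}$ is a symmetric probability density on $\R^{dr}$, while $\rho_t^{\otimes r}$ is a tensor product of the single-particle density $\rho_t$. Thus, for $\P$-a.e. $\omega$ and every $t \in [0,T]$, the subadditivity inequality gives
\begin{equation*}
\mathcal{H}(\rho_t^{r,N} \vert \rho_t^{\otimes r}) \le \frac{r}{N}\, \mathcal{H}(\rho_t^{N} \vert \rho_t^{\otimes N}),
\end{equation*}
and the Csiszar--Kullback--Pinsker inequality~\eqref{eq: CKP_inequality} gives
\begin{equation*}
\norm{\rho_t^{r,N}-\rho_t^{\otimes r}}_{L^1(\R^{dr})}^2 \le 2\, \mathcal{H}(\rho_t^{r,N} \vert \rho_t^{\otimes r}).
\end{equation*}

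Chaining the two pointwise-in-$(\omega,t)$ bounds, taking the supremum over $t \in [0,T]$ (which passes through the right-hand side since the constants are independent of $t$), then taking expectation and applying Theorem~\ref{theorem: main_estimate}, I obtain
\begin{equation*}
\E\bigg(\sup_{0 \le t \le T}\norm{\rho_t^{r,N}-\rho_t^{\otimes r}}_{L^1(\R^{dr})}^2\bigg)
\le \frac{2r}{N}\, \E\bigg(\sup_{0 \le t \le T} \mathcal{H}(\rho_t^{N} \vert \rho_t^{\otimes N})\bigg)
\le \frac{Cr}{N},
\end{equation*}
which is the claim.

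The only genuine subtlety is justifying the pathwise use of CKP and subadditivity, i.e.\ confirming that $\rho_t^{r,N}(\omega,\cdot)$ really is a symmetric probability density and $\rho_t^{\otimes r}(\omega,\cdot)$ really is a tensor product for almost every $(\omega,t)$; this is handled by the existence/regularity results in Section~\ref{sec: existence_ass_spde} together with the exchangeability argument above. Everything else is routine once Theorem~\ref{theorem: main_estimate} is in hand.
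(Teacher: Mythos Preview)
Your proposal is correct and follows exactly the approach indicated by the paper, which merely states that the result follows from the Csiszar--Kullback--Pinsker inequality together with the subadditivity property applied to the bound of Theorem~\ref{theorem: main_estimate}. You have spelled out the pathwise application of these two inequalities and the passage through the supremum and expectation precisely as intended; the additional care you take in justifying the symmetry of $\rho_t^{r,N}$ is consistent with Lemma~\ref{lemma: exchangable_enviro}.
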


\section{Conditional propagation of Chaos}

At the moment it is unclear whether the convergence results in Section~\ref{sec: stability_spde} implies conditional propagation of chaos in the sense of weak convergence of empirical measures. 
In this section we will demonstrate some useful comparison results between the classical propagation of chaos towards a deterministic measure and the conditional propagation of chaos in the setting of common noise.

We demonstrate the following conditional exchangability for the interacting particle system~\eqref{eq: particle_system}, which is based on~\cite[Lemma 23]{CoghiFlandoli2016} adjusted to our setting.
\begin{lemma} \label{lemma: exchangable_enviro}
    Let \(t \ge 0\) and \((X^{i}_t,i = 1,\ldots, N)\) be given by~\eqref{eq: particle_system}. Then for any permutation \(\vartheta \colon \{1,\ldots,N\} \mapsto \{1,\ldots,N \}\) the vector of random variables \((X^{i}_t,i = 1,\ldots, N)\) satisfies 
    \begin{equation*}
        \E(h(X_t^{1},X_t^2,\ldots,X_t^N) \vert \cF_t^W ) 
        =  \E(h(X_t^{\vartheta(1)},X_t^{\vartheta(2)},\ldots,X_t^{\vartheta(N)}) \vert \cF_t^W ) 
    \end{equation*}
    for every \(h \in C_b(\R^{dN})\). 
\end{lemma}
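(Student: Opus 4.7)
The plan is to exploit two facts: strong pathwise uniqueness of the interacting particle system~\eqref{eq: particle_system}, already guaranteed by the ellipticity of \(\sigma\) and the boundedness of \(k\) under Assumption~\ref{ass: diffusion_coef}, and the manifest permutation symmetry of the drift, diffusion and common-noise coefficients. By strong existence and uniqueness, there is a Borel measurable map \(\Phi\) from the initial data and driving Brownian paths into \(C([0,T];\R^{dN})\) such that
\begin{equation*}
(X^{1}, \ldots, X^{N}) = \Phi\big(\zeta^{1},\ldots,\zeta^{N};\, B^{1},\ldots,B^{N};\, W\big), \qquad \P\text{-a.s.}
\end{equation*}

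For a permutation \(\vartheta\), I would set \(Y^{i}_s := X^{\vartheta(i)}_s\) and re-index the interaction sum via the substitution \(j' = \vartheta^{-1}(j)\). Since the empirical drift \(\frac{1}{N}\sum_{j} k(X^{\vartheta(i)}_s - X^{j}_s) = \frac{1}{N}\sum_{j'} k(Y^{i}_s - Y^{j'}_s)\) and \(\sigma, \nu\) depend only on the individual coordinate, the family \((Y^{i})_{i=1}^{N}\) solves the same SDE system driven by \((B^{\vartheta(i)})_{i=1}^{N}\) with initial data \((\zeta^{\vartheta(i)})_{i=1}^{N}\) and the same common noise \(W\). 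Pathwise uniqueness then gives the identification
\begin{equation*}
\big(X^{\vartheta(1)}, \ldots, X^{\vartheta(N)}\big) = \Phi\big(\zeta^{\vartheta(1)},\ldots,\zeta^{\vartheta(N)};\, B^{\vartheta(1)},\ldots,B^{\vartheta(N)};\, W\big), \qquad \P\text{-a.s.}
\end{equation*}

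To conclude, I would invoke the probabilistic setup of Section~\ref{sec: setting}: the family \((\zeta^{i}, B^{i})_{i=1}^{N}\) is i.i.d.\ and jointly independent of \(\cF^{W}\). Consequently \((\zeta^{\vartheta(i)}, B^{\vartheta(i)})_{i=1}^{N}\) has the same law as \((\zeta^{i}, B^{i})_{i=1}^{N}\) and is likewise independent of \(\cF^{W}_t\). Writing \(F(\xi,\beta,w) := h(\Phi_t(\xi,\beta,w))\) and applying the standard freezing/independence lemma for conditional expectations, both \(\E(h(X^{1}_t,\ldots,X^{N}_t)\mid \cF^{W}_t)\) and \(\E(h(X^{\vartheta(1)}_t,\ldots,X^{\vartheta(N)}_t)\mid \cF^{W}_t)\) admit the same representation as \(\int F(\xi,\beta,W(\omega))\,\P_{(\zeta,B)}(\d\xi\,\d\beta)\) evaluated at the common noise path, hence coincide \(\P\)-a.s. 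The only delicate point I anticipate is the strong-uniqueness-based identification \(X^{\vartheta(i)} = \Phi_{i}(\zeta^{\vartheta}, B^{\vartheta}, W)\), which must be justified carefully since \(k\) is merely bounded rather than Lipschitz; this is where the ellipticity of \(\sigma\) (and the resulting Veretennikov-type strong well-posedness invoked in the paper) does the work, after which the remainder is a direct consequence of exchangeability of the driving data.
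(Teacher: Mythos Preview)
Your proof is correct and follows essentially the same approach as the paper: both rely on strong well-posedness of the particle system (via ellipticity of $\sigma$ and boundedness of $k$) together with the permutation symmetry of the drift to obtain that relabeling the particles corresponds to relabeling the i.i.d.\ driving data $(\zeta^{i},B^{i})$ while keeping $W$ fixed. The only cosmetic difference is the final step: the paper establishes equality of the joint law of $((X^{1},\ldots,X^{N}),W)$ and $((X^{\vartheta(1)},\ldots,X^{\vartheta(N)}),W)$ via Yamada--Watanabe and then checks the conditional identity on cylinder sets of $\cF_t^{W}$, whereas you invoke the measurable solution map $\Phi$ explicitly and apply the freezing lemma.
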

\begin{remark}
 In particular the condition~\eqref{eq: conditional_exchanagbility_condition} in Lemma~\ref{lemma: propagation_of_chaos_aux} is fulfilled with \(\cG = \cF_t^{W}\).    
\end{remark}

\begin{proof}
Consider the particle system~\eqref{eq: particle_system} without common noise. Then by~\cite{hao2022} this SDE has a strong solution. Thus, we know that the particle system~\eqref{eq: particle_system} must also have a strong solution. Additionally, by the exchangeability of the initial data, the Yamada--Watanabe theorem tells us that strong uniqueness implies uniqueness in law and therefore we obtain 
\begin{align} \label{eq: aux_exchangable}
\begin{split} 
    &\; \mathrm{Law} \big( ((X_t^{1},X_t^2,\ldots,X_t^N),(W_t^{i},i =1,\ldots, \tilde{m})) \big) \\
    = &\; \mathrm{Law} \big( ((X_t^{\vartheta(1)},X_t^{\vartheta(2)}, \ldots,X_t^{\vartheta(N)}),(W_t^{i},i =1,\ldots, \tilde{m})) \big) .
\end{split}
\end{align}

Now choose a cylinder set \( A \in \cF_t^{W}\), i.e. 
\begin{equation*}
    A= (W_{t_{1,1}}^{1})^{-1}(A_{1,1}) \cap \cdots \cap (W_{t_{1,r_1}}^{1})^{-1}(A_{1,r_1}) \cap (W_{t_{2,1}}^{2})^{-1}(A_{2,1})  \cap \cdots \cap (W_{t_{\tilde{m},r_{\Tilde{m}}}}^{\tilde m})^{-1}(A_{\tilde{m},r_{\tilde{m}}})
\end{equation*}
for \(r_1,\ldots,r_{\tilde{m}}  \in \N \), \(t_{i,\gamma} \in [0,t] \) for \(i =1, \ldots, \tilde{m}\), \(\gamma=1,\ldots,\max(r_1,\ldots, r_{\tilde{m}}) \).
These cylinder set are closed under intersections and generate the \(\sigma\)-algebra \(\cF_t^W\). 
For \(f \in C_b(\R^{dN})\) we find 
\begin{equation*}
\E ( \indicator{A}h(X_t^{1},X_t^2,\ldots,X_t^N) ) 
= \E(\indicator{A} h(X_t^{\vartheta(1)},X_t^{\vartheta(2)}, \ldots,X_t^{\vartheta(N)})) 
\end{equation*}
by the representation of \(A\) as the intersection of inverse images of \(W\) and the uniqueness of laws~\eqref{eq: aux_exchangable}. Consequently, the conditional expectations must coincide. 
\end{proof}

Let us partially transfer the convergence results by Sznitman~\cite[Proposition 2.2]{snitzman_propagation_of_chaos}
to the case of random limiting measure. 

\begin{lemma} \label{lemma: propagation_of_chaos_aux}
Let \((\Omega,\cF,\cP)\) be a probability space and let \(\cG \) be a sub-\(\sigma\)-algebra of \(\cF\). 
Let \(\mathbf{Z}^N = (Z^{1,N},\ldots,Z^{N,N}) \) be a \(\R^{dN}\)-valued exchangeable vector of measurable variables and \(f^{1,N} \in L^1_{\cG}(L^1(\R^d))\), \( f^{2,N} \in L^1_{\cG}(L^1(\R^{2d}) \) be the conditional densities given the \(\sigma\)-algebra \(\cG\) of the random variables \(Z^{1,N},(Z^{1,N},Z^{2,N})\), respectively. Additionaly, suppose there exists a measurable function \(g \in L^1_{\cG}(L^1(\R^d))  \cap L^\infty_{\cG}(L^1(\R^d))\) such that
\begin{equation*}
\lim\limits_{N \to \infty} \bigg( \norm{f^{1,N}-g}_{L^1_{\cG}(L^1(\R^d))} + \norm{f^{2,N}-g \otimes g}_{L^1_{\cG}(L^1(\R^{2d}))} \bigg)  =0  
\end{equation*}
and for any bounded continuous function \(h \in C_b(\R^d)\) the equaliy 
\begin{equation} \label{eq: conditional_exchanagbility_condition}
\E(h(Z^{1,N}) \vert \cG) = \E(h(Z^{i,N}) \vert \cG), \quad \mathrm{for} \; \mathrm{all} \; i \le N 
\end{equation}
holds. 
Then, the empirical measure converges to \(g\) in the sense of random measures equipped with the topology of weak convergence. More precisely, for any bounded continuous function \(\varphi \in \R^d\) we have 
\begin{equation*}
 \bigg\langle \frac{1}{N} \sum\limits_{i=1}^N  \delta_{Z^{i,N}}, \varphi \bigg \rangle {\xrightarrow[N \to \infty]{d}} \; \langle g,\varphi \rangle  , 
\end{equation*}
where the convergence is in the sense of distributions. 
\end{lemma}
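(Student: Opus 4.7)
\medskip
\noindent\textbf{Proof proposal.} The plan is to mimic the classical second moment computation of Sznitman~\cite{snitzman_propagation_of_chaos}, but carried out \emph{conditionally} on $\cG$. Set $\mu^N := \frac{1}{N}\sum_{i=1}^N \delta_{Z^{i,N}}$ and fix $\varphi \in C_b(\R^d)$. Since $\langle g,\varphi\rangle$ is $\cG$-measurable and hence random, convergence ``in distribution'' for a single test function follows from convergence in probability of the real-valued random variables $\langle\mu^N,\varphi\rangle \to \langle g,\varphi\rangle$. My goal is therefore to show
\[
\E\!\left[\bigl(\langle \mu^N,\varphi\rangle - \langle g,\varphi\rangle\bigr)^2\right] \xrightarrow[N\to\infty]{} 0,
\]
by first controlling the conditional variance given $\cG$ and then integrating in $\omega$.

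First, I would expand
\[
\E\!\left[\bigl(\langle \mu^N,\varphi\rangle - \langle g,\varphi\rangle\bigr)^2 \,\Big|\, \cG\right]
= \E[\langle \mu^N,\varphi\rangle^2\mid\cG] - 2\langle g,\varphi\rangle\,\E[\langle \mu^N,\varphi\rangle\mid\cG] + \langle g,\varphi\rangle^2.
\]
Using the conditional-exchangeability hypothesis~\eqref{eq: conditional_exchanagbility_condition}, the middle term equals $2\langle g,\varphi\rangle\langle f^{1,N},\varphi\rangle$. For the first term I would split the double sum into diagonal ($i=j$, contributing at most $\|\varphi\|_\infty^2/N$) and off-diagonal parts. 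On the off-diagonal, the pair $(Z^{i,N},Z^{j,N})$ with $i\neq j$ has the same conditional law as $(Z^{1,N},Z^{2,N})$, whose conditional density is $f^{2,N}$, so the contribution is $\tfrac{N-1}{N}\langle f^{2,N},\varphi\otimes\varphi\rangle$. Rearranging, and using $\langle g\otimes g,\varphi\otimes\varphi\rangle = \langle g,\varphi\rangle^2$, I get the pointwise bound
\[
\E\!\left[\bigl(\langle \mu^N,\varphi\rangle - \langle g,\varphi\rangle\bigr)^2 \,\big|\, \cG\right] \le \|\varphi\|_\infty^2 \!\left( \tfrac{1}{N} + \|f^{2,N} - g\otimes g\|_{L^1(\R^{2d})} + 2\|g\|_{L^1(\R^d)}\|f^{1,N}-g\|_{L^1(\R^d)} \right).
\]

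Taking expectations and invoking the convergence assumption $\|f^{1,N}-g\|_{L^1_\cG(L^1)} + \|f^{2,N} - g\otimes g\|_{L^1_\cG(L^1)} \to 0$ (together with $g \in L^\infty_\cG(L^1(\R^d))$, used to control $\langle g,\varphi\rangle$ uniformly), the right-hand side vanishes as $N\to\infty$. Thus $\langle\mu^N,\varphi\rangle \to \langle g,\varphi\rangle$ in $L^2(\P)$, which in particular gives convergence in distribution as stated.

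The main technical obstacle is the passage from the one-marginal identity~\eqref{eq: conditional_exchanagbility_condition} to the analogous identity for pairs, which is what legitimises the identification of $\E[\varphi(Z^{i,N})\varphi(Z^{j,N})\mid\cG]$ with $\langle f^{2,N},\varphi\otimes\varphi\rangle$ for \emph{every} $i\neq j$. I would derive this pairwise conditional exchangeability by exactly the same cylinder-set argument used in Lemma~\ref{lemma: exchangable_enviro}: unconditional exchangeability of $(Z^{1,N},\dots,Z^{N,N})$ jointly with $W$, combined with the fact that cylinder sets generate $\cF_t^W$ (and more generally the relevant $\cG$), yields the identity of the conditional laws of any two pairs. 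This step is routine once one observes that the monotone class extension commutes with the permutation action on the coordinates, so it is conceptually the same obstruction that had to be overcome at the level of single marginals.
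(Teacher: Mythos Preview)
Your overall strategy is exactly that of the paper: prove $L^2(\P)$-convergence of $\langle\mu^N,\varphi\rangle$ to $\langle g,\varphi\rangle$ by expanding the square, splitting the double sum into diagonal and off-diagonal parts, and identifying the latter via $f^{2,N}$ and the cross term via $f^{1,N}$. The difference is purely in the order of operations, and it matters a little.

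The paper takes the \emph{unconditional} expectation immediately, so that the off-diagonal reduction $\E[\varphi(Z^{i,N})\varphi(Z^{j,N})]=\E[\varphi(Z^{1,N})\varphi(Z^{2,N})]$ follows from the assumed \emph{unconditional} exchangeability of $\mathbf{Z}^N$; only afterwards is a conditional expectation inserted to rewrite this as $\E[\langle f^{2,N},\varphi\otimes\varphi\rangle]$. Your route instead seeks a pointwise-in-$\omega$ bound on $\E[(\langle\mu^N,\varphi\rangle-\langle g,\varphi\rangle)^2\mid\cG]$, which forces you to claim $\E[\varphi(Z^{i,N})\varphi(Z^{j,N})\mid\cG]=\langle f^{2,N},\varphi\otimes\varphi\rangle$ for \emph{every} $i\neq j$. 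That is pairwise conditional exchangeability, and it is not a consequence of the stated hypotheses of the abstract lemma (only one-marginal conditional identity~\eqref{eq: conditional_exchanagbility_condition} and unconditional exchangeability are assumed). Your proposed remedy---the cylinder-set argument of Lemma~\ref{lemma: exchangable_enviro}---lives in the concrete particle-system setting where $\cG=\cF_t^W$ is generated by a Brownian motion; it is not available for an arbitrary sub-$\sigma$-algebra $\cG$, so it cannot close the gap at the level of generality of the lemma.

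The fix is cheap: just take $\E$ before reducing to the pair $(1,2)$, exactly as the paper does. Then unconditional exchangeability handles the off-diagonal, and you never need conditional exchangeability of pairs. With that reordering your argument and the paper's coincide.
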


\begin{proof}
Let \(\varphi\colon \R^d \mapsto \R \) be a bounded continuous function. We show \(L^2\)-converges, which then implies converges in distribution. 
Expanding the square we find 
\begin{align*}
&\; \E \bigg( \bigg | \bigg\langle \frac{1}{N} \sum\limits_{i=1}^N  \delta_{Z^{i,N}}, \varphi \bigg \rangle
- \langle g,\varphi \rangle \bigg|^2 \bigg ) \\ 
=&\;  \frac{1}{N^2} \sum\limits_{i,j=1}^N  \E ( \varphi(Z^{i,N}) \varphi(Z^{j,N}) )-2 \frac{1}{N} \sum\limits_{i=1}^N \E(\varphi(Z^{i,N})  \langle g,\varphi \rangle ) +  \E(  \langle g,\varphi \rangle^2 )  \\
=&\; \frac{1}{N} \E(\varphi(Z^{i,N}) ^2)
+ \frac{(N^2-N)}{N^2}  \E ( \varphi(Z^{1,N}) \varphi(Z^{2,N}) )
- 2 \E(\E(\varphi(Z^{1,N}) \vert \cG)   \langle g,\varphi \rangle)
+  \E(  \langle g,\varphi \rangle^2 ) . 
\end{align*}
Notice that \(\varphi\) is bounded and therefore the first term vanishes. The factor of the second term converges to one and for the expected value we obtain
\begin{align*}
|\E ( \varphi(Z^{1,N}) \varphi(Z^{2,N}) ) -  \E(\langle \varphi , g \rangle^2 )| 
&= |\E ( \varphi(Z^{1,N}) \varphi(Z^{2,N}) \vert \mathcal{G}) -  \E(\langle \varphi , g \rangle^2 )| \\
&= |\E(\langle \varphi \otimes \varphi, f^{2,N} \rangle ) -  \E(\langle \varphi \otimes \varphi, g \otimes g \rangle ) | \\
&\le \norm{\varphi}_{L^\infty(\R^d}^2 \norm{f^{2,N}-g \otimes g}_{L^1_{\cG}(L^1(\R^{2d}))}
  \\
&\to 
0 , \; \mathrm{as} \; N \to \infty.
\end{align*}
For the third term we find 
\begin{align*}
|\E(\E(\varphi(Z^{1,N}) \vert \cG)   \langle \varphi, g  \rangle) -  \E(\langle \varphi , g \rangle^2 )|
&=|\E(( \langle \varphi, f^{1,N} \rangle  -  \langle \varphi, g  \rangle)  \langle \varphi , g \rangle)|  \\
& \le \norm{ \varphi}_{L^\infty(\R^d)} \norm{ f^{1,N}- g  }_{L^1_{\cG}(L^1(\R^{d}))} \norm{ \langle \varphi , g \rangle}_{L^\infty_{\cG}}  \\
&\le  \norm{ \varphi}_{L^\infty(\R^d)}^2 \norm{g}_{L^\infty_{\cG}(L^1(\R^d))} \norm{ f^{1,N}- g  }_{L^1_{\cG}(L^1(\R^{d}))} \\
&\to 
0 , \; \mathrm{as} \; N \to \infty.
\end{align*}
Combining the two convergences proves the lemma. 
\end{proof}

Combining Lemma~\ref{lemma: propagation_of_chaos_aux}, Lemma~\ref{lemma: exchangable_enviro} and Theorem~\ref{theorem: main_estimate} we obtain conditional propagation of chaos.                  
\begin{corollary}
Let \(k \in L^2(\R^d) \cap L^\infty(\R^d)  \), then conditional propagation of chaos holds for the interacting particle system~\eqref{eq: particle_system} towards the random measure with density \(\rho\) given by the solution of the SPDE~\eqref{eq: d_dim_spde}. 
\end{corollary}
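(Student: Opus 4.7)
The plan is to invoke Lemma~\ref{lemma: propagation_of_chaos_aux} with the sub-$\sigma$-algebra $\cG = \cF^W_t$ and the exchangeable random vector $\mathbf{Z}^N = (X_t^{1},\ldots,X_t^{N})$ given by the interacting particle system~\eqref{eq: particle_system}. Let us first identify the objects: by construction $\rho^{1,N}_t$ (respectively $\rho^{2,N}_t$) is the conditional density of $X_t^{1}$ (respectively of $(X_t^{1}, X_t^{2})$) given $\cF_t^W$, so these play the role of $f^{1,N}$ and $f^{2,N}$. The candidate limit is $g = \rho_t$, the solution of the $d$-dimensional SPDE~\eqref{eq: d_dim_spde} which, by Proposition~\ref{prop: existence_d_spde} and mass conservation, is a $\cF_t^W$-measurable probability density, in particular $\norm{\rho_t}_{L^1(\R^d)} = 1$ almost surely. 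Hence $\rho_t \in L^\infty_{\cF^W_t}(L^1(\R^d)) \cap L^1_{\cF^W_t}(L^1(\R^d))$, which takes care of the integrability requirement on $g$.

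Next I would verify the two hypotheses of Lemma~\ref{lemma: propagation_of_chaos_aux}. The conditional exchangeability condition~\eqref{eq: conditional_exchanagbility_condition} is precisely the content of Lemma~\ref{lemma: exchangable_enviro}. For the convergence of marginals, the corollary following Theorem~\ref{theorem: main_estimate} gives, for $r \in \{1,2\}$,
\begin{equation*}
\E\bigg( \norm{\rho^{r,N}_t - \rho_t^{\otimes r}}_{L^1(\R^{dr})}^2 \bigg) \le \frac{Cr}{N}.
\end{equation*}
By Jensen's inequality this upgrades to
\begin{equation*}
\norm{\rho^{r,N} - \rho^{\otimes r}}_{L^1_{\cF^W_t}(L^1(\R^{dr}))} = \E\bigg( \norm{\rho^{r,N}_t - \rho_t^{\otimes r}}_{L^1(\R^{dr})} \bigg) \le \sqrt{\frac{Cr}{N}} \; \xrightarrow[N\to\infty]{} \; 0,
\end{equation*}
which is exactly the convergence hypothesis of Lemma~\ref{lemma: propagation_of_chaos_aux} with $g = \rho_t$ and $g \otimes g = \rho_t^{\otimes 2}$.

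Applying Lemma~\ref{lemma: propagation_of_chaos_aux} now yields, for every bounded continuous test function $\varphi \in C_b(\R^d)$, the convergence
\begin{equation*}
\bigg\langle \frac{1}{N} \sum_{i=1}^N \delta_{X^{i}_t}, \varphi \bigg\rangle \; \xrightarrow[N \to \infty]{d} \; \langle \rho_t, \varphi \rangle,
\end{equation*}
which is the claimed conditional propagation of chaos towards the random measure with density $\rho_t$. The main obstacle, which has already been absorbed by our preceding results, was the transition from $L^1$-closeness of the finite-dimensional conditional marginals to weak convergence of empirical measures toward a \emph{random} limit; the classical equivalence of Sznitman~\cite[Proposition 2.2]{snitzman_propagation_of_chaos} does not apply directly because the limit is stochastic, which is precisely why Lemma~\ref{lemma: propagation_of_chaos_aux} was formulated in its conditional form.
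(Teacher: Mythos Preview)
Your proof is correct and follows essentially the same approach as the paper, which simply states that the corollary follows by combining Lemma~\ref{lemma: propagation_of_chaos_aux}, Lemma~\ref{lemma: exchangable_enviro}, and Theorem~\ref{theorem: main_estimate}. You have spelled out in appropriate detail how these three ingredients fit together, including the identification of the conditional marginals, the verification of the integrability of $g=\rho_t$, and the use of Jensen's inequality to pass from the $L^2$-in-$\Omega$ bound on the $L^1$-marginals to the $L^1_{\cG}(L^1)$ convergence required by Lemma~\ref{lemma: propagation_of_chaos_aux}.
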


\appendix
\begin{center}
{Appendix}
\end{center}

\renewcommand{\thesection}{A}
\setcounter{theorem}{0}

\medskip

We demonstrate that the relative entropy on laws can be estimated by the relative entropy on the conditional laws. Consequently, demonstrating convergence in relative entropy on some conditional laws is always a stronger statement, which justifies the use of the conditional Liouville equation~\eqref{eq: liouville_SPDE}. 
\begin{lemma}
Let \(X,Y\) be two Banach space \((E,\mathcal{E})\)-valued random variables defined on some probability space \((\Omega, \cF, \P)\). Let \(\cG^{i}\) for \(i=1,2\) be sub \(\sigma\)-algebras of \(\cF\). Than we have the following inequality 
\begin{equation*}
\mathcal{H}(\L_{X}  \vert  \L_{Y}) 
\le  \E(  \mathcal{H}(\L_{X \vert \cG^1 } \vert  \L_{Y \vert \cG^2 } )).
\end{equation*}
Choosing \(\cG^1\) or \(\cG^2\) as the trivial \(\sigma\)-algebra we obtain 
\begin{equation*}
\mathcal{H}(\L_{X}  \vert  \L_{Y}) 
\le \min \bigg( \E( \mathcal{H}(\L_{X \vert \cG^1 } \vert  \L_{Y} ) ), \E( \mathcal{H}(\L_{X  } \vert  \L_{Y \vert \cG^2 } ) ) \bigg). 
\end{equation*}

\end{lemma}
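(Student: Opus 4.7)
The plan is to combine two classical facts: (i) the law of $X$ is the $\P$-mixture of any regular conditional distribution given a sub-$\sigma$-algebra, and (ii) the relative entropy is jointly convex in its two arguments. Assuming $E$ is a separable Banach space (so that $(E,\mathcal{E})$ is standard Borel), regular conditional distributions exist, and one has the mixture representations
\begin{equation*}
\L_{X}(\cdot) = \int_\Omega \L_{X\mid \cG^1}(\omega,\cdot)\,\d\P(\omega),
\qquad
\L_{Y}(\cdot) = \int_\Omega \L_{Y\mid \cG^2}(\omega,\cdot)\,\d\P(\omega),
\end{equation*}
both against the \emph{same} mixing measure $\P$. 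This is the key structural observation: even though $\cG^1$ and $\cG^2$ may differ, both laws arise as averages over the common sample space.

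Next, I would invoke the joint convexity of $(\mu,\nu)\mapsto\mathcal{H}(\mu\mid\nu)$. This can be read directly off the Donsker--Varadhan variational formula
\begin{equation*}
\mathcal{H}(\mu\mid\nu) = \sup_{\phi \in C_b(E)} \Bigl\{ \langle \phi,\mu \rangle - \log \langle e^\phi, \nu \rangle \Bigr\},
\end{equation*}
since each functional $(\mu,\nu)\mapsto \langle\phi,\mu\rangle - \log\langle e^\phi,\nu\rangle$ is linear in $\mu$ and convex in $\nu$, hence jointly convex, and a pointwise supremum of jointly convex functionals remains jointly convex. Applying Jensen's inequality with respect to $\P$ to the pair of mixtures above then yields
\begin{equation*}
\mathcal{H}(\L_X \mid \L_Y)
\le \int_\Omega \mathcal{H}\bigl(\L_{X\mid \cG^1}(\omega,\cdot)\,\big|\,\L_{Y\mid \cG^2}(\omega,\cdot)\bigr)\,\d\P(\omega)
= \E\bigl(\mathcal{H}(\L_{X\mid \cG^1}\mid \L_{Y\mid \cG^2})\bigr),
\end{equation*}
which is the main inequality.

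The second assertion follows immediately by taking either $\cG^1$ or $\cG^2$ to be the trivial $\sigma$-algebra $\{\emptyset,\Omega\}$, in which case the corresponding conditional law coincides $\P$-a.s.\ with the unconditional one; the claim is then the minimum of the two resulting inequalities. The only genuine obstacle is ensuring that the Donsker--Varadhan formula and the existence of regular conditional distributions apply in the Banach-space-valued setting, but both hold in full generality as soon as $E$ is separable, so no further hypotheses are needed.
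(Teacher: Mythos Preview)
Your proof is correct and essentially the same as the paper's: both rest on the Donsker--Varadhan variational formula together with Jensen's inequality applied to the concave $\log$. The only difference is packaging---you invoke joint convexity of $\mathcal{H}$ as a black box and then apply Jensen to the mixtures, whereas the paper carries out that same joint-convexity argument inline (tower property on the $\psi(X)$ term, Jensen on $-\log\E[e^{\psi(Y)}\mid\cG^2]$, then pulling the supremum inside the expectation).
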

\begin{proof}
Using the variational formula and Jensen inequality we obtain 
\begin{align*}
\mathcal{H}(\L_{X}  \vert  \L_{Y}) 
&= \sup\limits_{\psi \in \mathcal{B}_b(E)}
\bigg ( \E(\psi(X)) - \log \big(  \E(\exp(\psi(Y)))\big) \bigg)  \\
&=\sup\limits_{\psi \in \mathcal{B}_b(E)}
\bigg ( \E(\psi(X)) - \log \big(  \E(\E(\exp(\psi(Y)) \vert \cG^2 ) ) \big) \bigg)  \\
&\le \sup\limits_{\psi \in \mathcal{B}_b(E)}
\bigg ( \E(\E(\psi(X) \vert \cG^1) ) -  \E( \log ( \E(\exp(\psi(Y)) \vert \cG^2 )  ) ) \bigg) \\
&= \sup\limits_{\psi \in \mathcal{B}_b(E)}
\bigg ( \E \bigg( \E(\psi(X) \vert \cG^1) -   \log \big( \E(\exp(\psi(Y)) \vert \cG^2 )  \big) \bigg) \bigg)\\
&\le \E \bigg( \sup\limits_{\psi \in \mathcal{B}_b(E)}\bigg( \E(\psi(X) \vert \cG^1) -   \log \big( \E(\exp(\psi(Y)) \vert \cG^2 )  \big) \bigg) \bigg)\\
 &=  \E(  \mathcal{H}(\L_{X \vert \cG^1 } )   \vert  \L_{Y \vert \cG^2 } )). 
\end{align*}
\end{proof}

\bibliography{quellen}
\bibliographystyle{amsalpha}
\end{document}